\newlist{condenum}{enumerate}{1}
\newtheorem{thm}[equation]{Theorem}
\newtheorem{conj}[equation]{Conjecture}
\newtheorem{question}[equation]{Open Problem}
\newtheorem{prop}[equation]{Proposition}
\newtheorem{cor}[equation]{Corollary}
\theoremstyle{definition}
\theoremstyle{remark}
\newtheorem{rem}[equation]{Remark}
\newtheorem{rems}[equation]{Remarks}
\numberwithin{equation}{section}
\newcommand{\FF}{\mathbb{F}}
\newcommand{\CC}{\mathbb{C}}
\newcommand{\ZZ}{\mathbb{Z}}
\newcommand{\KK}{\mathbb{K}}
\def\mydefb#1{\expandafter\def\csname #1#1#1\endcsname{\mathcal{#1}}}
\def\mydefallb#1{\ifx#1\mydefallb\else\mydefb#1\expandafter\mydefallb\fi}
\renewcommand{\epsilon}{\varepsilon}
\newcommand{\spanning}{\operatorname{-span}}
\renewcommand{\setminus}{-}
\let\shortto\to
\renewcommand{\to}{\longrightarrow}
\let\shortmapsto\mapsto
\renewcommand{\mapsto}{\longmapsto}
\newcommand{\UT}{\mathrm{UT}}
\newcommand{\GL}{\mathrm{GL}}
\newcommand{\Mat}{\mathrm{Mat}}
\newcommand{\Sym}{{\mathrm{Sym}}}
\newcommand{\QSym}{{\mathcal Q}\Sym}
\newcommand{\cf}{\mathsf{cf}}
\newcommand{\scf}{\mathsf{scf}}
\newcommand{\cfunsupp}{\cf^{\operatorname{uni}}_{\operatorname{supp}}}
\newcommand{\cfunchar}{\cf^{\operatorname{uni}}_{\operatorname{char}}}
\newcommand{\ket}[1]{#1 \big\rangle}
\newcommand{\cano}{\operatorname{cano}}
\newcommand{\USind}{(n)}
\newcommand{\zetaUS}{ \delta_{(\bullet)}^{\ast}}
\newcommand{\canoUS}{\mathbf{p}_{\{1\}}}
\newcommand{\canoUC}{\mathbf{p}_{\mathbbm{1}}}
\newcommand{\UTnInd}{([n], \emptyset)}
\newcommand{\zetaCQSbullet}{(q-1)^{\bullet} \delta_{([\bullet], \emptyset)}^{\ast}}
\newcommand{\canoCQS}{\mathbf{c}_{\{1\}}}
\newcommand{\canoLLT}{\mathbf{c}_{\mathbbm{1}}}
\newcommand{\ind}{\operatorname{Ind}}
\newcommand{\res}{\operatorname{Res}}
\newcommand{\permchar}{\overline{\chi}}
\newcommand{\permind}{\overline{\delta}}
\newcommand{\reg}{\operatorname{reg}}
\newcommand{\asc}{\operatorname{asc}}
\newcommand{\Dyck}{\operatorname{Dyck}}
\newcommand{\diag}{\operatorname{Diag}}
\newcommand{\mesa}{\operatorname{Mesa}}
\begin{document}

\title{A unipotent realization of the chromatic quasisymmetric function}
\author{Lucas Gagnon}
\date{\today}
\maketitle
\begin{abstract}
This paper realizes of two families of combinatorial symmetric functions via the complex character theory of the finite general linear group $\mathrm{GL}_{n}(\mathbb{F}_{q})$: chromatic quasisymmetric functions and vertical strip LLT polynomials.  
The associated $\mathrm{GL}_{n}(\mathbb{F}_{q})$ characters are elementary in nature and can be obtained by induction from certain well-behaved characters of the unipotent upper triangular groups $\mathrm{UT}_{n}(\mathbb{F}_{q})$.  
The proof of these results also gives a general Hopf algebraic approach to computing the induction map.  
Additional results include a connection between the relevant $\mathrm{GL}_{n}(\mathbb{F}_{q})$ characters and Hessenberg varieties and a re-interpretation of known theorems and conjectures about the relevant symmetric functions in terms of $\mathrm{GL}_{n}(\mathbb{F}_{q})$.  
\end{abstract}

{\small \textbf{Keywords:} Chromatic quasisymmetric function; LLT polynomial; unipotent group; combinatorial Hopf algebra; supercharacter}

\section{Introduction}

The chromatic symmetric function sits at a nexus of disparate areas of mathematics.  
At face value, this symmetric function encodes the coloring problem of a graph as an analogue of the chromatic polynomial~\cite{Sta}.  
However, through a well-known equivalence between the ring of symmetric functions and the representation theory of the symmetric groups (see e.g.~\cite{Mac}), some chromatic symmetric functions are also complex characters of the symmetric group~\cite{Gash}.   
Moreover, by way of a $t$-analogue known as the chromatic quasisymmetric function, Brosnan and Chow~\cite{BrosChow} and Guay-Paquet~\cite{GP16} independently proved that the characters corresponding to indifference graphs are afforded by symmetric group representations on the cohomology rings of regular semisimple Hessenberg varieties, as predicted by a conjecture of Shareshian and Wachs~\cite{ShWa}.  
Thus, certain questions about graphs, representation theory, and algebraic geometry coincide in the combinatorics of these symmetric functions, and vice versa.

At about the same time, a sequence of superficially unrelated developments occurred in the character theory of the group of unipotent upper triangular matrices $\UT_{n}$ over a finite field $\FF_{q}$.  
Unlike the symmetric group, the conjugacy classes and irreducible characters of $\UT_{n}$ are exceptionally complicated and cannot be described with modern combinatorial tools~\cite{GudEtAl}.  
However, beginning with the work of Andr\'{e}~\cite{Andre}, a theory of well-behaved reducible characters---known a supercharacters---has developed, leading to a combinatorial representation theory of $\UT_{n}$ without irreducibles, as in~\cite{AgBerTh, AgEtAl}.  
A recent example given by Aliniaeifard and Thiem~\cite{AlTh20} constructs supercharacters which are imbued with Catalan combinatorics coming from a family of normal subgroups of $\UT_{n}$.  
These same subgroups and supercharacters will appear in this paper, where they will be indexed in a canonical way by indifference graphs.

This paper uses the representation theory of the general linear group $\GL_{n}$ over $\FF_{q}$ to establish a connection between the supercharacter theory of $\UT_{n}$ and the chromatic (quasi-) symmetric function.  
Both $\UT_{n}$ and its subgroups are contained in $\GL_{n}$.    
The main result, Theorem~\ref{thm:CQSbijection} shows that up to a factor of $(q-1)^{n}$, inducing the trivial character from each of these subgroups gives a map
\[
\left\{ \begin{array}{c} \text{indifference graph} \\ \text{indexed subgroups} \end{array}\right\} \xrightarrow{\;\; \ind^{\GL_{n}}_{(-)}(\mathbbm{1}) \;\;} \left\{ \begin{array}{c}  \text{chromatic quasisymmetric functions for} \\ \text{indifference graphs evaluated at $t = q$} \end{array} \right\},
\]
using an implicit identification between characters of $\GL_{n}$ with unipotent support and symmetric functions coming from the Hall algebra; more details can be found in Section~\ref{sec:mainresult1}.  
This result is a $\GL_{n}(\FF_{q})$-analogue of the Brosnan--Chow--Guay-Paquet theorem, in which cohomology rings are replaced by a permutation representation on the cosets of certain unipotent subgroups.

The remaining sections of the paper explore the implications of the main result for the theory of chromatic quasisymmetric functions.
Many of these consequences are reminiscent of consequences of the Brosnan--Chow--Guay-Paquet theorem.  
Along with Theorem~\ref{thm:CQSbijection} itself, these similarities come as a surprise, especially since the association between characters of $\GL_{n}$ and symmetric functions used above is markedly different from the classical association for the symmetric groups.  
Intuition notwithstanding, however, each result appears to be straightforward, or even inevitable once the right perspective is achieved.

Section~\ref{sec:Hessenberg} relates Theorem~\ref{thm:CQSbijection} to the study of Hessenberg varieties, but not the ones appearing in the Brosnan--Chow--Guay-Paquet theorem.  
Instead, the values of the $\GL_{n}$ characters in Theorem~\ref{thm:CQSbijection} count the points of a nilpotent Hessenberg variety over $\FF_{q}$ associated to an ad-nilpotent ideal.
The analogous complex Hessenberg varieties over have been studied by Precup and Sommers~\cite{PrecupSommers}, who found an independent connection to the chromatic quasisymmetric function via Poincar\'{e} polynomials.  
Corollary~\ref{cor:adnilpotentpoincare} links these results by showing that the Poincar\'{e} polynomials for the complex Hessenberg varieties also count the points of the corresponding Hessenberg variety over $\FF_{q}$.

The chromatic quasisymmetric functions of indifference graphs are also closely related to another family of symmetric functions known as unicellular LLT polynomials~\cite{CM} (see also~\cite{HHL}), and Section~\ref{sec:mainresult2} reframes this relationship as a $\GL_{n}$ representation theoretic one.  
There is a second, more standard realization of symmetric functions as \textit{unipotent} characters of $\GL_{n}$, and up to a twist by the involution $\omega$, Theorem~\ref{thm:LLTbijection} gives a map
\[
\left\{ \begin{array}{c} \text{indifference graph} \\ \text{indexed subgroups} \end{array}\right\} \xrightarrow{\;\; \operatorname{proj}_{\text{unipotent}} \circ \ind^{\GL_{n}}_{(-)}(\mathbbm{1}) \;\;}  \left\{ \begin{array}{c}  \text{unicellular LLT polyno-} \\ \text{mials evaluated at $t = q$} \end{array} \right\},
\]
where $\operatorname{proj}_{\text{unipotent}}$ is the operation which replaces a character of $\GL_{n}$ with the sum of its irreducible unipotent constituents.  
In fact, by applying the composite map to additional characters of $\UT_{n}$---including supercharacters---Theorem~\ref{thm:LLTbijection} finds the larger family of vertical strip LLT polynomials as unipotent characters of $\GL_{n}$.  
These symmetric functions are known to appear in the representation theory of quantum groups~\cite{LLT}, affine Hecke algebras~\cite{GrojHai}, and the symmetric groups~\cite{GP16, HHL}, but this is their first appearance in the representation theory of $\GL_{n}$.

Finally, both chromatic quasisymmetric functions and LLT polynomials 
are the subject of ``positivity conjectures'' which are at least partially open.  
Such a conjecture postulates that when a particular symmetric function is expressed in a chosen basis, the coefficient of each basis element will be a polynomial in $t$ with nonnegative coefficients.  
For chromatic quasisymmetric functions, the modified Stanley--Stembridge conjecture~\cite[Conjecture 1.3]{ShWa} (see also~\cite{StaStem}) concerns the elementary basis, and is almost entirely open.  
For LLT polynomials, positivity in the Schur basis has been established by Grojnowski and Haiman~\cite{GrojHai}, but no ``positive'' combinatorial formula is known in general~\cite{HaglundBook}.  
Section~\ref{sec:positivity} describes the meaning of these conjectures---and one more, recently resolved by D'Adderio~\cite{Dadd} and Alexandersson and Sulzgruber~\cite{AS}---in $\GL_{n}$ representation theory.  
This does not lead to immediate progress on any conjecture, but it may be a useful guide for future work.

The method of proof for Theorems~\ref{thm:CQSbijection} and~\ref{thm:LLTbijection} may also be of independent interest.  
At a high level, I am able to translate Guay--Paquet's proof in~\cite{GP16} into the the (super-)character theory of $\UT_{n}$ and $\GL_{n}$ in such a way that both results follow immediately.  
However, this translation also gives a more general Hopf algebraic conduit from the combinatorial representation theory of $\UT_{n}$ to that of $\GL_{n}$.  
Since matters of $\UT_{n}$ character theory are usually very difficult, the tractability of this approach alone is a significant development.  
Furthermore, these results begin to answer lingering questions from the paper~\cite{AgEtAl} about the Hopf algebraic enumerative invariants of certain supercharacters of $\UT_{n}$.  

A short summary of the aforementioned framework and the machinery of~\cite{GP16} is given in this paragraph.  
In~\cite{AgBerSot}, Aguiar, Bergeron, and Sottile constructively classify all Hopf algebra homomorphisms from an arbitrary Hopf algebra to the Hopf algebra of symmetric functions $\Sym$ with the linear characters of the domain.  
This generalizes Zelevinsky's theory of PSH algebras, which completely describes the character theory of $\GL_{n}$ by constructing a collection of homomorphisms from a Hopf algebra $\cf(\GL_{\bullet})$ of $\GL_{n}$-class functions to $\Sym$, the ring of symmetric functions.  
In~\cite{Gaga}, I construct an analogous Hopf algebra $\cf(\UT_{\bullet})$ on the class functions of $\UT_{n}$, and show that induction $\ind^{\GL_{n}}_{\UT_{n}}$ induces a Hopf algebra homomorphism to $\cf(\GL_{\bullet})$.  
By composing induction with any of Zelevinsky's maps to $\Sym$, the classification of~\cite{AgBerSot} can be used to describe the induction map itself, and Theorems~\ref{thm:canoCQS} and~\ref{thm:canoLLT} do so.  
The classification of~\cite{AgBerSot} was also used in~\cite{GP16} to construct the chromatic quasisymmetric function using a Hopf algebra structure on Hessenberg varieties, and I show that this coincides with induction of Catalan supercharacters and related objects.

This Hopf algebraic approach builds on the previously understood relationship between the combinatorics of unipotent subgroups and of finite groups of Lie type, including $\GL_{n}$~\cite{AnTh, GelGra, Kawanaka, Zel}.  
Future work should continue to push this connection: it may be possible to transplant some of the framework in this paper and~\cite{Gaga} into other Lie types.  
In doings so, one might find the generalized LLT polynomials defined in~\cite{GrojHai}, yet-to-be-discovered variants of the chromatic quasisymmetric function, and more nilpotent Hessenberg varieties.  

The remainder of the paper is organized as follows.  Section~\ref{sec:prelims} describes the general background material for the paper.  Section~\ref{sec:mainresult1} concerns Theorem~\ref{thm:CQSbijection} and the chromatic quasisymmetric function, and Section~\ref{sec:Hessenberg} relates these results to Hessenberg varieties.  Section~\ref{sec:mainresult2} concerns Theorem~\ref{thm:LLTbijection} and the vertical strip LLT polynomial, and is essentially independent of Sections~\ref{sec:mainresult1} and~\ref{sec:Hessenberg}.  Finally, Section~\ref{sec:positivity} connects my results to various positivity conjectures.


\paragraph{Acknowledgments}

Along with~\cite{Gaga}, this paper is part of a Ph.D.~thesis undertaken at the University of Colorado Boulder, and I am extremely thankful for the support and insights of my advisor Nathaniel Thiem throughout this process.  
I am also grateful to Martha Precup for a helpful conversation about Hessenberg varieties.
An extended abstract of this work will appear in the proceedings of the FPSAC 2023 conference.

\section{Preliminaries}
\label{sec:prelims}

This section gives the shared preliminary material for Sections~\ref{sec:mainresult1} and~\ref{sec:mainresult2}.  This includes definitions of each of the relevant Hopf algebras, background material from representation theory and combinatorics, and a short review of the theory of combinatorial Hopf algebras.


\subsection{Hopf algebras and (Quasi-)symmetric functions}
\label{sec:QSym}

This section will describe the Hopf algebras of quasisymmetric and symmetric functions, and their role as universal objects in the theory of combinatorial Hopf algebras.  Throughout this paper, the term ``Hopf algebra'' will refer to a graded connected Hopf algebra over the field of complex numbers $\CC$, and all homomorphims and sub-Hopf algebras are graded.  

A \emph{composition} of $n \in \ZZ_{\ge 0}$ is a finite (possibly empty) sequence of positive integers $\alpha = (\alpha_{1}, \ldots, \alpha_{k})$ with $\alpha_{1} + \cdots + \alpha_{k} = n$.  Call each $\alpha_{i}$ a \emph{part} of $\alpha$, and write $\ell(\alpha) = k$ for the number of parts of $\alpha$.  The \emph{monomial quasisymmetric function} associated to the composition $\alpha$ is 
\[
M_{\alpha} = \sum_{i_{1} < \cdots < i_{\ell(\alpha)}} x_{i_{1}}^{\alpha_{1}} x_{i_{2}}^{\alpha_{2}} \cdots x_{i_{\ell(\alpha)}}^{\alpha_{\ell(\alpha)}} \in \CC[[\mathbf{x}]].
\]
where $\mathbf{x} = \{x_{1}, x_{2}, \ldots \}$ is an infinite, totally ordered set of indeterminates.  The \emph{Hopf algebra of quasisymmetric functions} is the graded commutative, noncocommutative Hopf algebra 
\[
\QSym= \CC\spanning\{M_{\alpha} \;|\; \text{$\alpha$ is a composition}\}.
\]
The product of $\QSym$ is inherited from $\CC[[\mathbf{x}]]$ and the coproduct is given by deconcatenation:
\[
\Delta(M_{\alpha}) = \hspace{-0.5em} \sum_{\ell(\alpha) \ge k \ge 0} \hspace{-0.5em} M_{(\alpha_{1}, \ldots, \alpha_{k})} \otimes M_{(\alpha_{k}, \ldots, \alpha_{\ell(\alpha)})}.
\]

A \emph{partition of $n$} is a composition of $n$ is with nonincreasing parts. Let 
\[
\PPP = \bigsqcup_{n \ge 0} \PPP(n) \qquad \text{with}\qquad \PPP(n) = \{\text{partitions of $n$}\}.
\]
The \emph{Hopf algebra of symmetric functions} is the maximal cocommatative Hopf subalgebra
\[
\Sym = \CC\spanning\{m_{\lambda}\;|\; \lambda \in \PPP\} \subseteq \QSym \qquad \text{with}\qquad m_{\lambda} = \hspace{-0.5em} \sum_{\operatorname{sort}(\alpha) = \lambda} \hspace{-0.5em} M_{\alpha},
\]
where $\operatorname{sort}(\alpha)$ denotes the partition obtained by listing the parts of $\alpha$ in nonincreasing order.  

Two additional bases of $\Sym$ will be used in later sections.  The first basis consists of the \emph{elementary symmetric functions} $\{e_{\lambda} \;|\; \lambda \in \PPP\}$ defined by
\[
e_{\lambda} = e_{\lambda_{1}} \cdots e_{\lambda_{\ell}} \qquad\text{where}\qquad e_{k} = m_{(1^{k})}.
\]
The second basis comprises the \emph{Schur functions} $\{s_{\lambda} \;|\; \lambda \in \PPP\}$, which I will not define; see~\cite[\nopp I.3]{Mac}.

%
%

The antipode of $\Sym$ acts as $(-1)^{n} \omega$ on the $n$th graded component, where $\omega$ is the involutive automorphism of $\Sym$ defined in~\cite[\nopp I.4]{Mac}, given by $\omega(s_{\lambda}) = s_{\lambda'}$, where $\lambda'$ denotes the transpose partition of $\lambda$: $(\lambda')_{i} = \#\{ j \in [\ell(\lambda)] \;|\; \lambda_{j} \ge i\}$ for $1 \le i \le \lambda_{1}$.

\subsubsection{Combinatorial Hopf algebras}
\label{sec:CHAs}

This section will give an abridged description of the a framework for classifying Hopf algebra homomorphisms to $\QSym$ established in the paper~\cite{AgBerSot}.  The original result  also includes explicit formulas for all such maps, which is omitted from this paper as the relevant maps are already known.

A \emph{combinatorial Hopf algebra (CHA)} is a pair $(H, \zeta)$ where $H$ is a Hopf algebra and $\zeta: H \shortto \CC$ is an algebra homomorphism, which will be called a \emph{linear character} in order to avoid confusion with group characters.  An important example of a CHA is $\QSym$ with the \emph{first principal specialization}, 
\[
(\QSym, \operatorname{ps}_{1})
\qquad \text{with} \qquad
\begin{array}{rccc}
\operatorname{ps}_{1}\colon & \QSym & \to & \CC \\[0.1em]
& M_{\alpha} & \mapsto & \begin{cases} 1 & \text{if $\ell(\alpha) \le 1$,} \\ 0 & \text{otherwise.} \end{cases}
\end{array}
\]

A \emph{CHA morphism} between combinatorial Hopf algebras $(H, \zeta)$ and $(H', \zeta')$ is a graded Hopf algebra homomorphism $\Psi: H \shortto H'$ for which $\zeta = \zeta' \circ \Psi$.  For example, the inclusion of $\Sym$ into $\QSym$ gives a CHA morphism $(\Sym, \operatorname{ps}_{1}) \shortto (\QSym, \operatorname{ps}_{1})$.

\begin{thm}[\cite{AgBerSot} Theorem 4.1]
\label{thm:universalhopfmap}
Let $(H, \zeta)$ be a combinatorial Hopf algebra.  There is a unique CHA morphism 
\[
\cano
\colon (H, \zeta) \to (\QSym, \operatorname{ps}_{1}).
\]
%
\end{thm}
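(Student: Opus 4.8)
The plan is to write down $\cano$ explicitly, check it is a bialgebra (hence Hopf) morphism compatible with the linear characters, and then show any CHA morphism $(H,\zeta)\to(\QSym,\operatorname{ps}_{1})$ is forced to equal it. For a composition $\alpha=(\alpha_{1},\ldots,\alpha_{k})$ of $n$, let $\Delta^{(k-1)}\colon H\to H^{\otimes k}$ be the iterated coproduct (with the conventions $\Delta^{(0)}=\operatorname{id}_{H}$ and $\Delta^{(-1)}=\epsilon$ when $k=0$), let $\pi_{\alpha}\colon H^{\otimes k}\to H_{\alpha_{1}}\otimes\cdots\otimes H_{\alpha_{k}}$ be the projection onto the indicated graded components, and set $\zeta_{\alpha}=\zeta^{\otimes k}\circ\pi_{\alpha}\circ\Delta^{(k-1)}\colon H\to\CC$. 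Then define
\[
\cano(h)=\sum_{\alpha}\zeta_{\alpha}(h)\,M_{\alpha},
\]
the sum over all compositions $\alpha$. Because $H$ is graded and connected, $\zeta_{\alpha}(h)=0$ unless $|\alpha|=\deg h$, so the sum is finite, lands in $\QSym$, and $\cano$ is graded.

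Next I would verify that $\cano$ is a bialgebra morphism. Compatibility with units and counits is immediate from connectedness. For the coproduct, coassociativity lets one refine $\pi_{\alpha}\circ\Delta^{(k-1)}$ as a sum over splittings of $\alpha$ into a prefix and a suffix, which matches term-by-term the deconcatenation coproduct of $M_{\alpha}$; applying $\zeta^{\otimes k}$ throughout yields $\Delta\circ\cano=(\cano\otimes\cano)\circ\Delta$. For the product I would use that $\zeta$ is an algebra map together with multiplicativity of the iterated coproduct, $\Delta^{(k-1)}(hh')=\Delta^{(k-1)}(h)\cdot\Delta^{(k-1)}(h')$ in $H^{\otimes k}$; expanding $\zeta_{\gamma}(hh')$ this way and regrouping according to how the degrees of the tensor factors coming from $h$ and from $h'$ interleave reproduces exactly the quasi-shuffle (overlapping shuffle) expansion of $M_{\beta}M_{\beta'}$ in $\QSym$, giving $\cano(hh')=\cano(h)\cano(h')$. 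Finally $\operatorname{ps}_{1}\circ\cano=\zeta$, since $\operatorname{ps}_{1}(M_{\alpha})$ is nonzero only when $\ell(\alpha)\le 1$, and on $H$ one has $\zeta_{()}+\sum_{m\ge 1}\zeta_{(m)}=\zeta$ (on $H_{0}$, $\zeta_{()}=\epsilon=\zeta$; on $H_{m}$ with $m\ge 1$, $\zeta_{(m)}=\zeta$ and $\zeta_{()}=0$).

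For uniqueness the key observation is that the coefficient functionals on $\QSym$ are recoverable from the Hopf structure and $\operatorname{ps}_{1}$ alone. Set $\phi_{\alpha}=\operatorname{ps}_{1}^{\otimes k}\circ\pi_{\alpha}\circ\Delta^{(k-1)}\colon\QSym\to\CC$. Since the deconcatenation coproduct of $M_{\beta}$ produces tensors of the form $M_{\beta^{(1)}}\otimes\cdots\otimes M_{\beta^{(k)}}$ with $\beta$ the concatenation of the $\beta^{(i)}$, and $\operatorname{ps}_{1}$ together with the degree constraint $\deg M_{\beta^{(i)}}=\alpha_{i}\ge 1$ forces each $\beta^{(i)}$ to be the single part $(\alpha_{i})$, one gets $\phi_{\alpha}(M_{\beta})=\delta_{\alpha\beta}$. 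Now let $\Psi\colon(H,\zeta)\to(\QSym,\operatorname{ps}_{1})$ be any CHA morphism and write $\Psi(h)=\sum_{\alpha}c_{\alpha}(h)M_{\alpha}$; then $c_{\alpha}(h)=\phi_{\alpha}(\Psi(h))$. Because $\Psi$ is graded, a coalgebra map, and satisfies $\operatorname{ps}_{1}\circ\Psi=\zeta$, it commutes with the projections $\pi_{\alpha}$ and with iterated coproducts, so $\phi_{\alpha}\circ\Psi=\zeta^{\otimes k}\circ\pi_{\alpha}\circ\Delta^{(k-1)}=\zeta_{\alpha}$. Hence $c_{\alpha}=\zeta_{\alpha}$ and $\Psi=\cano$.

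The main obstacle is the multiplicativity step: identifying the expansion of $\zeta_{\gamma}(hh')$ coming from $\Delta^{(\ell(\gamma)-1)}(hh')=\Delta^{(\ell(\gamma)-1)}(h)\Delta^{(\ell(\gamma)-1)}(h')$ and the algebra-map property of $\zeta$ with the quasi-shuffle product rule for the monomial basis of $\QSym$. This is a purely combinatorial bookkeeping identity about splitting a composition $\gamma$ into interleaved parts, overlaps, and parts contributed by $\beta$ and $\beta'$, and it carries essentially all of the content of the theorem; the remaining verifications are formal.
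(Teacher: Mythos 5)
Your proposal is correct: the paper itself gives no proof of this statement (it is quoted from Aguiar--Bergeron--Sottile, and the explicit formula is deliberately omitted), and what you have written is a faithful reconstruction of their argument, including the defining formula $\cano(h)=\sum_{\alpha}\zeta_{\alpha}(h)M_{\alpha}$ with $\zeta_{\alpha}=\zeta^{\otimes k}\circ\pi_{\alpha}\circ\Delta^{(k-1)}$. Your uniqueness step via the functionals $\phi_{\alpha}$ with $\phi_{\alpha}(M_{\beta})=\delta_{\alpha\beta}$ and your identification of the quasi-shuffle bookkeeping as the only nontrivial verification are both accurate, so there is nothing to correct.
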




\subsection{Dyck paths and related objects}
\label{sec:Catalan}
\newcommand{\area}{\operatorname{Area}}
\newcommand{\graph}{\operatorname{Graph}}

The results of this paper build on the combinatorics of Dyck paths, indifference graphs, and Schr\"{o}der paths, each of which are described in this section.

%
A \emph{Dyck path} of size $n \ge 0$ is a lattice path consisting of $2n$ steps east $E = (1, 0)$ and south $S = (0, -1)$ from $(0, 0)$ to $(n, -n)$ which does not go below the main diagonal $y = -x$.  
Let
\[
\DDD = \bigsqcup_{n \ge 0} \DDD_{n} \qquad\text{with}\qquad \DDD_{n} = \{\text{Dyck paths of size $n$}\}.
\]
For example,
\begin{equation}
\label{eq:DyckExample}
\begin{tikzpicture}[scale = 0.45, baseline = 0.45*-1.7cm]
\draw[gray] (-0.2, 0.2) grid (3.2, -3.2);
\draw[dashed, gray] (-0.2, 0.2) -- (3.2, -3.2);
\draw[very thick] (0, 0) -- (2, 0) -- (2, -1) -- (3, -1) -- (3, -3);
\end{tikzpicture} = (EESESS) \in \DDD_{3}.
\end{equation}
It is well known that the size of $\DDD_{n}$ is the $n$th Catalan number, $\frac{1}{n+1} \binom{2n}{n}$; see e.g.~\cite{StaCat}.

An \emph{indifference graph} of size $n \ge 0$ is a simple, unidrected graph $\gamma$ with vertex set $[n] = \{1, \ldots, n\}$ and edge set $E(\gamma)$ satisfying
\[
\text{for each $\{i, l\} \in E(\gamma)$:} \qquad \big\{\{j, k\} \;|\; i \le j < k \le l\big\} \subseteq E(\gamma).
\]
The empty graph on $\emptyset$ is the unique indifference graph of size zero.  
Let
\[
\III\GGG = \bigsqcup_{n \ge 0} \III\GGG_{n}\qquad\text{with}\qquad \III\GGG_{n} = \{\text{indifference graphs on $[n]$}\}.
\]
For example,
\[
\gamma = 
\begin{tikzpicture}[scale = 0.5, baseline = 0.5*-0.3cm]
\draw[fill] (0, 0) circle (2pt) node[inner sep = 1pt] (1) {};
\draw[fill] (1, 0) circle (2pt) node[inner sep = 1pt] (2) {};
\draw[fill] (2, 0) circle (2pt) node[inner sep = 1pt] (3) {};
\draw[fill] (3, 0) circle (2pt) node[inner sep = 1pt] (4) {};
\draw[below] (1) node {$\scriptstyle 1$};
\draw[below] (2) node {$\scriptstyle 2$};
\draw[below] (3) node {$\scriptstyle 3$};
\draw[below] (4) node {$\scriptstyle 4$};
\draw (1) -- (2);
\draw (2) -- (3);
\draw (1) to[out = 45, in = 135] (3);
\draw (3) -- (4);
\end{tikzpicture}
\in \III\GGG_{4}
\qquad\text{but}\qquad
\sigma = 
\begin{tikzpicture}[scale = 0.5, baseline = 0.5*-0.3cm]
\draw[fill] (0, 0) circle (2pt) node[inner sep = 1pt] (1) {};
\draw[fill] (1, 0) circle (2pt) node[inner sep = 1pt] (2) {};
\draw[fill] (2, 0) circle (2pt) node[inner sep = 1pt] (3) {};
\draw[fill] (3, 0) circle (2pt) node[inner sep = 1pt] (4) {};
\draw[below] (1) node {$\scriptstyle 1$};
\draw[below] (2) node {$\scriptstyle 2$};
\draw[below] (3) node {$\scriptstyle 3$};
\draw[below] (4) node {$\scriptstyle 4$};
\draw (1) -- (2);
\draw (2) -- (3);
\draw (1) to[out = 45, in = 135] (3);
\draw (1) to[out = 55, in = 125] (4);
\end{tikzpicture}
\notin \III\GGG_{4},
\]
as $\{1, 4\} \in E(\sigma)$ but $\{3, 4\} \notin E(\sigma)$.

There is a size-preserving bijection between Dyck paths and indifference graphs.  Label the unit squares above $y = -x$ in the fourth quadrant of $\ZZ \times \ZZ$ by edges so the square centered at $(\frac{1}{2}-j, i - \frac{1}{2})$ is labeled by $\{i, j\}$; for example
\begin{equation*}
\begin{tikzpicture}[scale = 0.55, baseline = 0.55*-1.7cm]
\draw[gray] (-0.2, 0.2) grid (3.2, -3.2);
\draw[dashed, gray] (-0.2, 0.2) -- (3.2, -3.2);
\draw[fill, red] (1.5, -0.5) node{$\scriptscriptstyle \{1, 2\}$};
\draw[fill, red] (2.5, -0.5) node{$\scriptscriptstyle \{1, 3\}$};
\draw[fill, red] (2.5, -1.5) node {$\scriptscriptstyle \{2, 3\}$};
\end{tikzpicture}
\end{equation*}
shows the first three of these unit squares with their labels.  For any Dyck path $\pi$, let
\[
\area(\pi) = \big\{ \{i, j\} \;\big|\; \text{the unit square $\{i, j\}$ is below $\pi$} \big\}
\]
and if $\pi$ has size $n$, define the \emph{graph of $\pi $} to be 
\[
\graph(\pi) = \big([n], \area(\pi)\big).
\]
For example, taking the Dyck path in~\eqref{eq:DyckExample}, 
\begin{equation}
\label{eq:DyckGraphExample}
\area\left( 
\begin{tikzpicture}[scale = 0.45, baseline = 0.45*-1.7cm]
\draw[gray] (-0.2, 0.2) grid (3.2, -3.2);
\draw[dashed, gray] (-0.2, 0.2) -- (3.2, -3.2);
\draw[very thick] (0, 0) -- (2, 0) -- (2, -1) -- (3, -1) -- (3, -3);
\end{tikzpicture} 
\right) = \big\{\{1, 2\}, \{2, 3\}\big\}
\qquad \text{and} \qquad
\graph\left( 
\begin{tikzpicture}[scale = 0.45, baseline = 0.45*-1.7cm]
\draw[gray] (-0.2, 0.2) grid (3.2, -3.2);
\draw[dashed, gray] (-0.2, 0.2) -- (3.2, -3.2);
\draw[very thick] (0, 0) -- (2, 0) -- (2, -1) -- (3, -1) -- (3, -3);
\end{tikzpicture} 
\right) = 
\begin{tikzpicture}[scale = 0.5, baseline = 0.5*-0.3cm]
\draw[fill] (0, 0) circle (2pt) node[inner sep = 1pt] (1) {};
\draw[fill] (1, 0) circle (2pt) node[inner sep = 1pt] (2) {};
\draw[fill] (2, 0) circle (2pt) node[inner sep = 1pt] (3) {};
\draw[below] (1) node {$\scriptstyle 1$};
\draw[below] (2) node {$\scriptstyle 2$};
\draw[below] (3) node {$\scriptstyle 3$};
\draw (1) -- (2);
\draw (2) -- (3);
\end{tikzpicture}.
\end{equation}

\begin{prop}[{\cite[Solution 187]{StaCat}}]
For $n \ge 0$, the map $\pi \shortmapsto \graph(\pi)$ is a bijection from $\DDD_{n}$ to $\III\GGG_{n}$.
\end{prop}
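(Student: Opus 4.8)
\noindent\emph{Proof proposal.} The plan is to identify both $\DDD_n$ and $\III\GGG_n$ with the single set of weakly increasing integer sequences
\[
S_n = \{(c_1, \ldots, c_n) \in \ZZ^n \;|\; i \le c_i \le n \text{ for all } i \in [n] \text{ and } c_1 \le c_2 \le \cdots \le c_n\},
\]
and then to check that $\graph$ realizes the resulting bijection. First I would describe Dyck paths by their ``column sequences'': given $\pi \in \DDD_n$, let $c_i(\pi)$ be the $x$-coordinate of the $i$th south step of $\pi$, i.e.\ the step descending from height $-(i-1)$ to height $-i$. Since $\pi$ uses only east and south steps and ends at $(n,-n)$, the numbers $c_1(\pi) \le \cdots \le c_n(\pi)$ are weakly increasing with $c_n(\pi) = n$, and a routine check shows that $\pi$ stays weakly above $y = -x$ precisely when $c_i(\pi) \ge i$ for every $i$; conversely, every sequence in $S_n$ is the column sequence of a unique Dyck path. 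Hence $\pi \mapsto (c_1(\pi), \ldots, c_n(\pi))$ is a bijection $\DDD_n \to S_n$.

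Next I would give the analogous description of indifference graphs. For $\gamma \in \III\GGG_n$ put $m_i(\gamma) = \max(\{i\} \cup \{l \;|\; i < l \text{ and } \{i,l\} \in E(\gamma)\})$. I would check that the defining condition of an indifference graph is equivalent to the conjunction of (i) $m_1(\gamma) \le \cdots \le m_n(\gamma)$ and (ii) $E(\gamma) = \{\{i,j\} \;|\; i < j \le m_i(\gamma)\}$: here (ii) is a reformulation of the closure property in the definition, and (i) follows by applying that property to the edge realizing $m_i(\gamma)$ (together with the trivial bound $m_{i+1}(\gamma) \ge i+1$). Granting this, $\gamma \mapsto (m_1(\gamma), \ldots, m_n(\gamma))$ maps $\III\GGG_n$ into $S_n$; it is injective by (ii), and surjective because for any $(c_i) \in S_n$ the graph $([n], \{\{i,j\} \;|\; i < j \le c_i\})$ lies in $\III\GGG_n$ (monotonicity of $(c_i)$ gives the closure property) and has $m$-sequence $(c_i)$. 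So this is a bijection $\III\GGG_n \to S_n$ as well.

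Finally I would match the two identifications. From the labelling of unit squares, the square $\{i,j\}$ with $i < j$ occupies the row between heights $-i$ and $-(i-1)$ and the column between $x = j-1$ and $x = j$; it lies strictly above $y = -x$ exactly because $i < j$, and --- since the $i$th south step of $\pi$ is the only portion of $\pi$ meeting the interior of that row --- it lies below $\pi$ exactly when $j \le c_i(\pi)$. Therefore $\area(\pi) = \{\{i,j\} \;|\; i < j \le c_i(\pi)\}$, which is precisely the edge set attached to the sequence $(c_i(\pi))$ under the second bijection. Thus, modulo the two identifications, $\graph$ is the identity map on $S_n$, so $\pi \mapsto \graph(\pi)$ is a bijection $\DDD_n \to \III\GGG_n$ (the case $n = 0$ being the evident pairing of the empty path with the empty graph). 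The step I expect to require the most care is this last one: pinning down ``below the path and strictly above the diagonal'' as the inequality $i < j \le c_i(\pi)$ without an off-by-one slip in the square-labelling convention; everything else is bookkeeping with monotone sequences.
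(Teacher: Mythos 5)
Your argument is correct. Note that the paper does not prove this statement at all—it is quoted from Stanley's \emph{Catalan Numbers} (Solution 187)—so there is nothing internal to compare against; what you have written is a legitimate self-contained substitute for that citation. Your route is the standard one: encode a Dyck path by the weakly increasing sequence $(c_1,\ldots,c_n)$ of $x$-coordinates of its south steps (with $i\le c_i\le n$, which automatically forces $c_n=n$), encode an indifference graph by the sequence of highest neighbors $m_i(\gamma)$, verify that the interval-closure condition is exactly equivalent to $(m_i)$ being weakly increasing together with $E(\gamma)=\{\{i,j\}\mid i<j\le m_i\}$, and then check that the square $\{i,j\}$ lies below $\pi$ precisely when $j\le c_i(\pi)$, so that $\area(\pi)=\{\{i,j\}\mid i<j\le c_i(\pi)\}$ and $\graph$ intertwines the two encodings. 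All three steps check out, including the one you flag as delicate: with the paper's labelling (as shown in its figure, square $\{i,j\}$ occupying row $i$ and column $j$), the only part of $\pi$ in the interior of row $i$ is the $i$th south step at $x=c_i$, and the convention visible in the paper's example (a square whose right edge coincides with the south step counts as below the path) is exactly your inequality $j\le c_i$. The only caution is that the paper's stated center $(\tfrac12-j,\,i-\tfrac12)$ is a typo for the reflected point; you correctly followed the figure rather than the formula, and your proof is consistent with how $\area$ is used elsewhere in the paper (e.g.\ Equation~(2.3)'s example).
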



A common generalization of Dyck paths will appear in Sections~\ref{sec:mainresult2} and~\ref{sec:positivity}.  
A \emph{Schr\"{o}der path} of size $n \ge 0$ is a lattice path from $(0, 0)$ to $(n, -n)$ consisting of steps $E$, $S$, and $D = (1, -1)$ that never goes below the main diagonal.  Thus, every Dyck path is a Schr\"{o}der path, but there are more Schr\"{o}der paths, for example
\begin{equation}
\label{eq:SchroderExample}
\begin{tikzpicture}[scale = 0.45, baseline = 0.45*-1.7cm]
\draw[gray] (-0.2, 0.2) grid (3.2, -3.2);
\draw[dashed, gray] (-0.2, 0.2) -- (3.2, -3.2);
\draw[very thick] (0, 0) -- (2, 0)  -- (3, -1) -- (3, -3);
\end{tikzpicture}
= (EEDSS).
\end{equation}


Say that a Schr\"{o}der path $\sigma$ is \emph{tall} if $\sigma$ has no $D$ steps along the main diagonal.  Let
\[
\TTT\SSS = \bigsqcup_{n \ge 0} \TTT\SSS_{n} \qquad\text{with}\qquad \TTT\SSS_{n} = \{\text{tall Schr\"{o}der paths of size $n$}\}.
\]
The Sch\"{o}der path in~\eqref{eq:SchroderExample} above is tall, as is any Dyck path, taken as a Schr\"{o}der path.  
The number of tall Schr\"{o}der paths by size is given by the small Schr\"{o}der numbers,~\cite[A001003]{OEIS}.  


Finally, for any tall Schr\"{o}der path $\sigma \in \TTT\SSS$, define
\[
\area(\sigma) = \big\{ \{i, j\} \;\big|\; \text{the unit square $\{i, j\}$ is completely below $\sigma$} \big\}
\]
and
\[
\diag(\sigma) = \big\{ \{i, j\} \;|\; \text{$\sigma$ has a diagonal step through the unit square $\{i, j\}$} \big\},
\]
so that taking $\sigma$ as in Equation~\eqref{eq:SchroderExample} gives  $\area(\sigma) = \big\{\{1, 2\}, \{2, 3\}\big\}$ and $\diag(\sigma) = \big\{\{1, 3\}\big\}$.

\subsection{Supercharacter Theory}
\label{sec:sct}

Let $G$ be a finite group, let $\operatorname{Irr}(G)$ denote the irreducible complex characters of $G$, and let $\cf(G)$ denote the space of complex-valued class functions on $G$.  A \emph{supercharacter theory} $(\mathtt{Cl}, \mathtt{Ch})$ of $G$ comprises a set partition $\mathtt{Cl}$ of $G$ and a basis of orthogonal characters $\mathtt{Ch}$ for the space
\[
\scf(G) = \{ \phi: G \shortto \CC \;|\; \text{$\phi$ is constant on each part of $\mathtt{Cl}$}\},
\]
such that $\scf(G)$ contains the regular character $\operatorname{reg}_{G}$ of $G$.  
In this definition, orthogonality is determined by the usual inner product $\langle \cdot, \cdot \rangle: \cf(G) \otimes \cf(G) \shortto \CC$.  

The elements of $\mathtt{Cl}$ and $\mathtt{Ch}$ are respectively called \emph{superclasses} and \emph{supercharacters}.
Every group has at least one supercharacter theory, with superclasses given by conjugacy classes and supercharacters given by irreducible characters, and in this case $\scf(G) = \cf(G)$.

Each supercharacter theory of $G$ comes with two canonical bases: the supercharacters in $\mathtt{Ch}$ and the set of \emph{superclass identifier functions}
\begin{equation}
\label{eq:classindicator}
\{\delta_{K} \;|\; \text{$K \in \mathtt{Cl}$}\}
\qquad\text{with}\qquad
\delta_{K}(g) = \begin{cases} 1 & \text{if $g \in K$,} \\ 0 & \text{otherwise} \end{cases}
\end{equation}
Up to scaling these bases are self-dual with respect to $\langle \cdot, \cdot \rangle$, and I will identify
\begin{equation}
\label{eq:cfdual}
\scf(G)^{\ast} = \{\ket{\chi} \;|\; \chi \in \scf(G)\}.
\qquad\text{with}\qquad 
\begin{array}{rcl}
\ket{\chi}: \scf(G) & \to & \CC \\
\psi & \mapsto & \langle \psi, \chi \rangle
\end{array}.
\end{equation}

The remainder of the section describes a particular collection of supercharacter theories originating in the work of Aliniaeifard and Thiem~\cite{AlTh20}.  Fix a prime power $q$, let $\FF_{q}$ denote the field with $q$ elements, and let $\GL_{n} = \GL_{n}(\FF_{q})$.  The \emph{unipotent upper triangular group} is the subgroup
\[
\UT_{n} = \{ g \in \GL_{n} \;|\; \text{$(g - 1_{n})_{i, j} \neq 0$ only if $i < j$} \}
\]
where $1_{n}$ denotes the $n \times n$ identity matrix.  This group has a family of normal subgroups---called \emph{normal pattern subgroups}---indexed by indifference graphs~\cite[Lemma~4.1]{Mar}: for $\gamma \in \III\GGG_{n}$, let
\[
\UT_{\gamma} = \{ g \in \UT_{n} \;|\; \text{$g_{i, j} = 0$ if $\{i, j\} \in E(\gamma)$} \} 
\]
where $E(\gamma)$ denotes the edge set of $\gamma$.  If $\pi \in \DDD_{n}$ is the Dyck path for which $\gamma = \graph(\pi)$, $\UT_{\gamma}$ can be visualized in terms of $\pi$: $\UT_{\gamma}$ is the subset of elements of $\UT_{n}$ with nonzero entries occurring only on the diagonal or above the path $\pi$.  For example using the graph and Dyck path from Equation~\eqref{eq:DyckGraphExample},
\[
\UT_{\hspace{-0.25em} \begin{tikzpicture}[scale = 0.35, baseline = 0.35*-0.3cm]
\draw[fill] (0, 0) circle (2pt) node[inner sep = 1pt] (1) {};
\draw[fill] (1, 0) circle (2pt) node[inner sep = 1pt] (2) {};
\draw[fill] (2, 0) circle (2pt) node[inner sep = 1pt] (3) {};
\draw[below] (1) node {$\scriptscriptstyle 1$};
\draw[below] (2) node {$\scriptscriptstyle 2$};
\draw[below] (3) node {$\scriptscriptstyle 3$};
\draw (1) -- (2);
\draw (2) -- (3);
\end{tikzpicture}\hspace{-0.25em} }
= 
\begin{tikzpicture}[scale = 0.45, baseline = 0.45*-1.7cm]
\path[fill = gray!35] (2, 0) -- (3, 0) -- (3, -1) -- (2, -1) -- cycle;
\draw[gray] (0, 0) grid (3, -3);
\draw[gray, thick] (0, 0) -- (3, 0) -- (3, -3) -- (0, -3) -- cycle;
\draw[very thick] (0, 0) -- (2, 0) -- (2, -1) -- (3, -1) -- (3, -3);
\foreach \x in {1, 2, 3}{\draw (\x - 0.5, -\x + 0.5) node {$1$};}
\foreach \x in {1}{\foreach \y in {2, 3}{\draw (\x - 0.5, -\y + 0.5) node {$0$};}}
\foreach \x in {2}{\foreach \y in {3}{\draw (\x - 0.5, -\y + 0.5) node {$0$};}}
\foreach \x in {2}{\foreach \y in {1}{\draw (\x - 0.5, -\y + 0.5) node {$0$};}}
\foreach \x in {3}{\foreach \y in {1}{\draw (\x - 0.5, -\y + 0.5) node {$\ast$};}}
\foreach \x in {3}{\foreach \y in {2}{\draw (\x - 0.5, -\y + 0.5) node {$0$};}}
\begin{scope}[xshift = -0.05cm]
\draw (0.2, 0.085) -- (-0.15, 0.085);
\draw[very thick] (-0.1, 0.1) -- (-0.1, -3.1);
\draw (-0.15, -3.085) -- (0.2, -3.085);
\end{scope}
\begin{scope}[xshift = 0.05cm]
\draw (2.8, 0.085) -- (3.15, 0.085);
\draw[very thick] (3.1, 0.1) -- (3.1, -3.1);
\draw (3.15, -3.085) -- (2.8, -3.085);
\end{scope}
\end{tikzpicture}.
\]

The set of normal pattern subgroups of $\UT_{n}$ form a lattice under containment.  This order is dual to the spanning subgraph relation on $\III\GGG_{n}$, in that the containment $\UT_{\gamma} \subseteq \UT_{\sigma}$ holds if and only if $\sigma$ is a spanning subgraph of $\gamma$.  The maximal normal pattern subgroup is $\UT_{n}$, corresponding to the edgeless graph $([n], \emptyset)$, and $|\UT_{n}: \UT_{\gamma}| = q^{|E(\gamma)|}$ for all $\gamma \in \III\GGG_{n}$.

The lattice structure on normal pattern subgroups partitions the set $\UT_{n}$ into parts
\begin{align*}
\UT_{\gamma}^{\circ} &= \{g \in \UT_{\gamma} \;|\; \text{$g \notin \UT_{\sigma}$ for any $\sigma \supsetneq \gamma$}\} 
\end{align*}
for each $\gamma \in \III\GGG_{n}$.  Similarly, $\III\GGG_{n}$ indexes the parts of a partition of the set of irreducible characters $\operatorname{Irr}(\UT_{n})$ of $\UT_{n}$: let
\[
\widehat{\UT}_{\gamma}^{\circ} = \{\psi \in \operatorname{Irr}(\UT_{n}) \;|\; \text{$\UT_{\gamma} \subseteq \ker(\psi)$ and $\UT_{\sigma} \not \subseteq \ker(\psi)$ for each $\sigma \supsetneq \gamma$} \}.
\]
for each $\gamma \in \III\GGG_{n}$, and further define
\[
\chi^{\gamma} = \sum_{\psi \in \widehat{\UT}_{\gamma}^{\circ}} \psi(1) \psi.
\]

\begin{prop}[{\cite[Section~3.2]{AlTh20}}]
With
\[
\mathtt{Ch} = \{ \UT^{\circ}_{\gamma} \;|\; \gamma \in \III\GGG_{n} \}
\qquad\text{and}\qquad
\mathtt{Cl} = \{ \chi^{\gamma} \;|\; \gamma \in \III\GGG_{n} \},
\]
the pair $(\mathtt{Cl}, \mathtt{Ch})$ is a supercharacter theory of $\UT_{n}$.
\end{prop}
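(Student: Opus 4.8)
The plan is to verify the three defining conditions of a supercharacter theory, with $\{\UT_\gamma^\circ : \gamma\in\III\GGG_n\}$ playing the role of the superclasses and $\{\chi^\gamma : \gamma\in\III\GGG_n\}$ that of the supercharacters, deducing everything from the two partitions already recorded above: that the $\UT_\gamma^\circ$ partition $\UT_n$, and that the $\widehat{\UT}_\gamma^\circ$ partition $\operatorname{Irr}(\UT_n)$. The first of these is precisely the ``set partition of $\UT_n$'' condition. From the second, for $\gamma\neq\sigma$ the sets $\widehat{\UT}_\gamma^\circ$ and $\widehat{\UT}_\sigma^\circ$ are disjoint, so $\langle\chi^\gamma,\chi^\sigma\rangle=0$, while $\langle\chi^\gamma,\chi^\gamma\rangle=\sum_{\psi\in\widehat{\UT}_\gamma^\circ}\psi(1)^2>0$ because each part is nonempty; hence the $\chi^\gamma$ are pairwise orthogonal nonzero characters, and in particular linearly independent. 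Summing the partition of $\operatorname{Irr}(\UT_n)$ gives $\sum_\gamma\chi^\gamma=\sum_{\psi\in\operatorname{Irr}(\UT_n)}\psi(1)\psi=\operatorname{reg}_{\UT_n}$, so $\operatorname{reg}_{\UT_n}\in\scf(\UT_n)$ will follow as soon as each $\chi^\gamma$ is shown to lie in $\scf(\UT_n)$. Finally $\dim\scf(\UT_n)=|\III\GGG_n|$, since the superclass identifier functions $\delta_{\UT_\gamma^\circ}$ are a basis of it, and this equals the number of (independent) $\chi^\gamma$; so the entire proposition comes down to the single claim that each $\chi^\gamma$ is a superclass function.

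For that claim I would go through inflated regular characters. For $\gamma\in\III\GGG_n$ put $\theta_\gamma=\sum_{\psi\,:\,\UT_\gamma\subseteq\ker(\psi)}\psi(1)\psi$, the inflation to $\UT_n$ of the regular character of $\UT_n/\UT_\gamma$ (a bona fide quotient since $\UT_\gamma$ is normal); explicitly $\theta_\gamma(g)=|\UT_n:\UT_\gamma|$ for $g\in\UT_\gamma$ and $\theta_\gamma(g)=0$ otherwise. Two observations complete the argument. First, $\UT_\gamma$ is a union of superclasses: the superclass $\UT_\sigma^\circ$ consists of the $g$ for which $\UT_\sigma$ is the smallest normal pattern subgroup containing $g$, and such a $g$ lies in $\UT_\gamma$ exactly when $\UT_\sigma\subseteq\UT_\gamma$, so $\UT_\gamma=\bigsqcup_{\sigma\,:\,\UT_\sigma\subseteq\UT_\gamma}\UT_\sigma^\circ$ and therefore $\theta_\gamma\in\scf(\UT_n)$. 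Second, partitioning the index set $\{\psi:\UT_\gamma\subseteq\ker(\psi)\}$ according to the largest normal pattern subgroup contained in $\ker(\psi)$ gives $\theta_\gamma=\sum_{\sigma\,:\,\UT_\gamma\subseteq\UT_\sigma}\chi^\sigma$, a relation that is unitriangular with respect to any linear extension of the inclusion order on normal pattern subgroups and hence invertible over $\CC$; consequently $\CC\spanning\{\chi^\gamma\}=\CC\spanning\{\theta_\gamma\}\subseteq\scf(\UT_n)$. Combined with the dimension count, this forces $\CC\spanning\{\chi^\gamma\}=\scf(\UT_n)$, so $\{\chi^\gamma\}$ is an orthogonal basis of $\scf(\UT_n)$, and the verification is complete.

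The step I expect to be the main obstacle is the identity $\theta_\gamma=\sum_{\sigma\,:\,\UT_\gamma\subseteq\UT_\sigma}\chi^\sigma$ together with the accompanying claim that $\UT_\gamma$ is a union of superclasses: both rest on the lattice structure of the normal pattern subgroups---their closure under intersection (from~\cite{Mar}), which yields the partition of $\UT_n$, and the existence of a unique largest normal pattern subgroup inside each irreducible character's kernel (from~\cite{AlTh20}), which yields the partition of $\operatorname{Irr}(\UT_n)$ used here. Once that structural input is granted, the remaining work---orthogonality, independence, the dimension count, and the evaluation of $\operatorname{reg}_{\UT_n}$---is routine bookkeeping, so the proof should be short.
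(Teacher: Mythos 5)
Your argument is correct, and there is nothing in the paper to compare it against: the paper does not prove this proposition but simply cites~\cite[Section~3.2]{AlTh20}, so your write-up is a self-contained verification of what is there taken on faith. Your route is essentially the standard ``normal lattice supercharacter theory'' argument: granting the two partitions of $\UT_{n}$ and of $\operatorname{Irr}(\UT_{n})$ (which rest on the closure of the normal pattern subgroups under intersection and product, i.e.\ on unions and intersections of indifference graphs being indifference graphs), orthogonality, the regular character, and the dimension count are immediate, and the only real content is that each $\chi^{\gamma}$ is a superclass function, which you get from the inflated regular characters $\theta_{\gamma}$ and unitriangularity. Two small observations. First, your $\theta_{\gamma}$ is exactly the paper's $\permchar^{\gamma} = \ind^{\UT_{n}}_{\UT_{\gamma}}(\mathbbm{1})$, and your identity $\theta_{\gamma} = \sum_{\sigma \subseteq \gamma} \chi^{\sigma}$ is the paper's definition of $\permchar^{\gamma}$ together with Equation~\eqref{eq:permtoind}, so your proof in effect re-derives that part of Section~\ref{sec:sct} rather than inventing new machinery. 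Second, you silently (and correctly) read the definition of $\widehat{\UT}_{\gamma}^{\circ}$ as ``$\UT_{\gamma}$ is the largest normal pattern subgroup contained in $\ker(\psi)$''; the paper's displayed condition with $\sigma \supsetneq \gamma$ is a typo (as written it is vacuous except for the maximal graph, since $\sigma \supsetneq \gamma$ forces $\UT_{\sigma} \subseteq \UT_{\gamma} \subseteq \ker(\psi)$), and your reading is the one consistent with~\cite{AlTh20} and needed for the index-set partition in your $\theta_{\gamma}$ identity. One could even note that your setup yields the nonemptiness of the character parts for free: the $\delta_{\gamma}$ are independent because each $\UT_{\gamma}^{\circ}$ visibly contains an element supported on all positions off $E(\gamma)$, hence the $\theta_{\gamma}$ are independent, and unitriangularity then forces the $\chi^{\gamma}$ to be independent and in particular nonzero.
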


For the remainder of the paper, write $\delta_{\gamma} = \delta_{\UT^{\circ}_{\gamma}}$ for the superclass identifier functions in this supercharacter theory.  In addition to these functions and the supercharacters, the space $\scf(\UT_{n})$ has two interesting bases: $\{\permind_{\gamma} \;|\; \gamma \in \III\GGG_{n}\}$ and $\{\permchar^{\gamma} \;|\; \gamma \in \III\GGG_{n}\}$, with
\[
\permind_{\gamma} = \sum_{\sigma \supseteq \gamma} \delta_{\sigma} 
\qquad\text{and}\qquad
\permchar^{\gamma} = \sum_{\sigma \subseteq \gamma} \chi^{\sigma}.
\]
Remarkably, if $\mathbbm{1} \in \cf(\UT_{\gamma})$ denotes the character of the trivial representation then
\begin{equation}
\label{eq:permtoind}
\permchar^{\gamma} = \ind^{\UT_{n}}_{\UT_{\gamma}}(\mathbbm{1}) = q^{|E(\gamma)|} \permind_{\gamma},
\end{equation}
the character of the $\UT_{n}$-module $\CC[\UT_{n}/\UT_{\gamma}]$.

\subsection{Homomorphisms between Hopf algebras of class functions}
\label{sec:repHopf}

In~\cite[\nopp III]{Zel}, Zelevinsky defines a graded connected Hopf algebra on the space
\[
\cf(\GL_{\bullet}) = \bigoplus_{n \ge 0} \cf(\GL_{n}),
\]
with structure maps coming from the parabolic induction and restriction functors.  
The paper~\cite{Gaga} defines a similar Hopf structure on the spaces
\[
\scf(\UT_{\bullet}) = \bigoplus_{n \ge 0} \scf(\UT_{n}),
\qquad\text{and}\qquad
\cf(\UT_{\bullet}) = \bigoplus_{n \ge 0} \cf(\UT_{n}),
\]
in which $\scf(\UT_{n})$ is the subspace of class functions defined in Section~\ref{sec:sct}, with $\scf(\UT_{\bullet})$ a sub-Hopf algebra of $\cf(\UT_{\bullet})$.  This section will describe several homomorphisms involving these Hopf algebras.

In~\cite[Section 6]{GP16}, Guay-Paquet defines a $\CC[t]$-Hopf algebra on the free $\CC[t]$-module $\CC[t][\III\GGG]$, and specializing $t \shortmapsto q^{-1}$ gives a Hopf algebra over $\CC$; see~\cite[Section~7]{Gaga}.  
Recall the basis $\{\permind_{\gamma} \;|\; \gamma \in \III\GGG\}$ of $\scf(\UT_{\bullet})$ defined in Section~\ref{sec:sct}.

\begin{thm}[{\cite[Corollary~7.2]{Gaga}}]
\label{thm:scfNG}
The map $\gamma \shortmapsto \permind_{\gamma}$ is an isomorphism from Guay-Paquet's specialized Hopf algebra to $\scf(\UT_{\bullet})$.
\end{thm}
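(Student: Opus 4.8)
The plan is to show that the graded linear map $\Theta$ sending the basis element $\gamma$ of Guay-Paquet's specialized Hopf algebra to $\permind_\gamma \in \scf(\UT_n)$ (for $\gamma \in \III\GGG_n$) is an isomorphism of bialgebras; since both sides are graded connected, any bialgebra map between them automatically commutes with antipodes, so this yields the asserted Hopf algebra isomorphism. That $\Theta$ is a graded linear isomorphism is immediate: the set $\{\permind_\gamma \mid \gamma \in \III\GGG_n\}$ is obtained from the superclass identifier basis $\{\delta_\sigma \mid \sigma \in \III\GGG_n\}$ of $\scf(\UT_n)$ by the change of basis $\permind_\gamma = \sum_{\sigma \supseteq \gamma} \delta_\sigma$, which is unitriangular for the spanning subgraph order and hence invertible, so $\Theta$ carries the distinguished basis of one Hopf algebra bijectively onto that of the other in each degree. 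Compatibility with units and counits is trivial, as both are detected in degree $0$, where both algebras equal $\CC$. Everything therefore reduces to checking that $\Theta$ intertwines the two products and the two coproducts.

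For this I would use Equation~\eqref{eq:permtoind}, which says that $\permind_\gamma = q^{-|E(\gamma)|}\,\ind^{\UT_n}_{\UT_\gamma}(\mathbbm{1})$ is, up to the explicit scalar $q^{-|E(\gamma)|}$, the character of the permutation module $\CC[\UT_n/\UT_\gamma]$. The product and coproduct of $\cf(\UT_\bullet)$ --- and hence of the sub-Hopf algebra $\scf(\UT_\bullet)$ --- are built in~\cite{Gaga} from the induction and restriction functors relating $\UT_n$ to its block subgroups $\UT_k \times \UT_{n-k}$, so applied to basis elements, $\permind_\alpha \cdot \permind_\beta$ and $\Delta(\permind_\gamma)$ are (scalar multiples of characters of) an induced permutation module and a restricted one, respectively, and re-expanding them in the basis $\{\permind_{\gamma'}\}$ is a Mackey-theoretic double-coset computation. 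Concretely, I would choose representatives for the relevant double cosets $\UT_\gamma \backslash \UT_n / (\UT_k \times \UT_{n-k})$ and their analogue for the product; because $\UT_\gamma$ is a normal pattern subgroup these may be taken to be partial permutation matrices supported in the appropriate off-diagonal block, so that each double coset contributes exactly one basis element $\permind_{\gamma'}$ together with a multiplicity that is an explicit power of $q$ measuring the size of a stabilizing block. This produces closed combinatorial formulas for $\cdot$ and $\Delta$ entirely in terms of the $\{\permind_\gamma\}$ basis, with structure constants in $\ZZ[q^{-1}]$.

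It then remains to compare these formulas with Guay-Paquet's definitions in~\cite[Section~6]{GP16}: his product glues two indifference graphs on consecutive intervals into an indifference graph on the concatenated vertex set, summing over the admissible ways to add crossing edges, each weighted by a power of $t$; his coproduct deconcatenates a graph along the splittings $[n] = \{1,\dots,k\} \sqcup \{k+1,\dots,n\}$, again with a $t$-weight recording the edges lost between the two intervals. Under the substitution $t \mapsto q^{-1}$, and after accounting for the normalization $\permind_\gamma = q^{-|E(\gamma)|}\permchar^\gamma$ together with the way $|E(-)|$ behaves under gluing and splitting, the two families of structure constants should coincide term by term. Hence $\Theta$ is a bialgebra, and therefore a Hopf algebra, isomorphism. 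In practice I expect this to be organized so that the combinatorial formulas for $\cdot$ and $\Delta$ on $\scf(\UT_\bullet)$ are recorded first, and the present statement is then the immediate observation that these are Guay-Paquet's formulas after $t \mapsto q^{-1}$.

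The main obstacle is the middle step --- the Mackey/double-coset computation --- and, within it, the bookkeeping that makes the powers of $q$ line up exactly with Guay-Paquet's powers of $t$. Double cosets in $\UT_n$ are precisely what makes the ordinary representation theory of $\UT_n$ intractable, so the argument must lean heavily on the very special structure of the subgroups $\UT_\gamma$ and $\UT_k \times \UT_{n-k}$ to obtain a clean, combinatorially indexed answer; and one must then track the two competing normalizations (the factor $q^{-|E|}$ relating $\permind$ to $\permchar$, and the convention that fixes $t = q^{-1}$ rather than $t = q$) without exponent or sign errors.
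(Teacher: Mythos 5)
The first thing to note is that the paper does not prove Theorem~\ref{thm:scfNG} at all: the statement is imported from \cite[Corollary~7.2]{Gaga}, where the Hopf structure on $\scf(\UT_{\bullet})$ is constructed and matched against Guay-Paquet's specialized algebra. So there is no internal argument to compare yours with; what you have written is a plan for reproving the cited result, and it has to be judged as such.

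Judged as such, it is an outline whose missing middle is exactly the substance. The easy parts are fine: unitriangularity of $\permind_{\gamma} = \sum_{\sigma \supseteq \gamma} \delta_{\sigma}$ gives the graded linear isomorphism, and a bialgebra morphism of graded connected bialgebras is automatically a Hopf morphism. But the compatibility of products and coproducts is asserted to follow from a Mackey computation that you never carry out, and, more seriously, you set that computation up against structure maps that are not obviously the ones defined in \cite{Gaga}: there the operations on $\cf(\UT_{\bullet})$ are built to mirror Zelevinsky's parabolic induction and restriction on $\cf(\GL_{\bullet})$, so they pass through the block pattern subgroup $\UT_{n} \cap P_{(k, n-k)}$ by inflation over, and deflation along, its block radical, rather than being plain $\ind$/$\res$ between $\UT_{n}$ and the direct product $\UT_{k} \times \UT_{n-k}$ as in your sketch; verifying bialgebra compatibility for the wrong pair of maps proves nothing, and this choice is precisely what later makes $\ind^{\GL}_{\UT}$ a Hopf morphism. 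With the correct maps the computation is in fact tamer than your ``main obstacle'' paragraph fears: since $\UT_{\gamma}$ is normal in $\UT_{n}$, the relevant double cosets collapse to cosets of a product subgroup (no partial-permutation representatives are needed), the product of two permutation characters $\permchar^{\alpha}\permchar^{\beta}$ should come out as a single concatenation rather than the weighted sum you attribute to Guay-Paquet's product (it is worth checking where his $t$-weights actually live --- your description may have product and coproduct interchanged), and essentially all of the content sits in the coproduct, where one must check that restriction-plus-deflation of $\permchar^{\gamma}$ produces the splittings of $\gamma$ with exactly the $q$-powers matching the $t \mapsto q^{-1}$ specialization, using \eqref{eq:permtoind} to pass between $\permchar^{\gamma}$ and $\permind_{\gamma}$. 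Those verifications are the theorem; until they are done, the proposal is a credible road map rather than a proof.
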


A second map comes from the induction functors $\ind_{\UT_{n}}^{\GL_{n}} : \cf(\UT_{n}) \shortto \cf(\GL_{n})$:  let
\begin{equation}
\label{eq:indUTGL}
\ind_{\UT}^{\GL} = \bigoplus_{n \ge 0} \ind_{\UT_{n}}^{\GL_{n}}: \cf(\UT_{\bullet}) \to \cf(\GL_{\bullet}).
\end{equation}

\begin{thm}[{\cite[Theorem~6.1]{Gaga}}]
The map $\ind^{\GL}_{\UT}$ is a Hopf algebra homomorphism.
\end{thm}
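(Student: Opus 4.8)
The plan is to verify directly that $\ind^{\GL}_{\UT}$ is a morphism of graded bialgebras; compatibility with the antipodes is then automatic, since both algebras are graded and connected. As $\ind^{\GL_{0}}_{\UT_{0}}$ is the identity on $\cf(\{1\})=\CC$ and $\ind^{\GL_{n}}_{\UT_{n}}$ lands in $\cf(\GL_{n})$, the map is graded and preserves the unit and counit, so the content is that it is an algebra and a coalgebra map. Recall the structure maps. On $\cf(\GL_{\bullet})$ the product is Harish--Chandra induction $f\ast g=\ind^{\GL_{m+n}}_{P_{m,n}}\big(\mathrm{infl}(f\otimes g)\big)$ through the parabolic $P_{m,n}=(\GL_{m}\times\GL_{n})\ltimes U_{m,n}$, and the coproduct is the Jacquet functor ($\res^{\GL_{m+n}}_{P_{m,n}}$, then average over $U_{m,n}$, then read off on the Levi). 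On $\cf(\UT_{\bullet})$ the product is inflation along $\UT_{m+n}=(\UT_{m}\times\UT_{n})\ltimes U_{m,n}\twoheadrightarrow\UT_{m}\times\UT_{n}$ and the coproduct is the analogous Jacquet functor. The key point making everything line up is that the two unipotent radicals coincide: the $(1,2)$-block subgroup $U_{m,n}$ is at once the unipotent radical of $P_{m,n}$ in $\GL_{m+n}$ and the kernel of $\UT_{m+n}\twoheadrightarrow\UT_{m}\times\UT_{n}$, and $\UT_{m+n}$ is exactly the preimage of $\UT_{m}\times\UT_{n}$ under $P_{m,n}\twoheadrightarrow\GL_{m}\times\GL_{n}$.

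For multiplicativity, take $f\in\cf(\UT_{m})$ and $g\in\cf(\UT_{n})$. Since $\UT_{m+n}\subseteq P_{m,n}$, transitivity of induction gives $\ind^{\GL_{m+n}}_{\UT_{m+n}}=\ind^{\GL_{m+n}}_{P_{m,n}}\circ\ind^{P_{m,n}}_{\UT_{m+n}}$, so it suffices to show that $\ind^{P_{m,n}}_{\UT_{m+n}}$ of the inflation of $f\otimes g$ to $\UT_{m+n}$ equals the inflation to $P_{m,n}$ of $\ind^{\GL_{m}\times\GL_{n}}_{\UT_{m}\times\UT_{n}}(f\otimes g)=\ind^{\GL_{m}}_{\UT_{m}}f\otimes\ind^{\GL_{n}}_{\UT_{n}}g$. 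As $\UT_{m+n}$ is the full preimage of $\UT_{m}\times\UT_{n}$ and the inflated $f\otimes g$ is the pullback of $f\otimes g$, this is the standard commutation of induction with inflation: both sides agree coset-by-coset under $P_{m,n}/\UT_{m+n}\xrightarrow{\ \sim\ }(\GL_{m}\times\GL_{n})/(\UT_{m}\times\UT_{n})$. I expect this step to be routine.

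Comultiplicativity is the crux. Fix $f\in\cf(\UT_{k})$ and $m+n=k$. I would compute the $(m,n)$-component of $\Delta\big(\ind^{\GL_{k}}_{\UT_{k}}f\big)$ by Mackey's formula, $\res^{\GL_{k}}_{P_{m,n}}\ind^{\GL_{k}}_{\UT_{k}}f=\sum_{w}\ind^{P_{m,n}}_{P_{m,n}\cap\,{}^{w}\UT_{k}}\big({}^{w}f\big)$, summed over $P_{m,n}\backslash\GL_{k}/\UT_{k}$. Because the diagonal torus $T$ lies in $P_{m,n}$ and normalizes itself, $P_{m,n}wB=P_{m,n}w\UT_{k}$ for every permutation matrix $w$ (with $B\supseteq\UT_{k}$ the standard Borel), so the Bruhat decomposition identifies this double-coset set with the minimal-length representatives of $(S_{m}\times S_{n})\backslash S_{k}$, i.e.\ the $(m,n)$-Grassmannian permutations. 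I would then apply the $U_{m,n}$-averaging to each Mackey term, pass to the Levi $\GL_{m}\times\GL_{n}$, and match the resulting sum with $(\ind^{\GL_{m}}_{\UT_{m}}\otimes\ind^{\GL_{n}}_{\UT_{n}})$ applied to the $(m,n)$-part of $\Delta(f)$, namely $\ind^{\GL_{m}\times\GL_{n}}_{\UT_{m}\times\UT_{n}}$ of the $U_{m,n}$-average of $f|_{\UT_{m}\times\UT_{n}}$. Both sides, evaluated at an element of the Levi, are sums of values of $f$ over explicit subsets of $\GL_{k}$ weighted by group-order constants, and the identity reduces to exhibiting a bijection between the two index sets, built from the Bruhat/double-coset data, that matches these constants.

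The main obstacle is exactly this last identification. The non-identity Grassmannian permutations (those with a descent at position $m$) genuinely contribute to the Mackey sum — this already occurs for $f$ trivial, where $\ind^{\GL_{k}}_{\UT_{k}}(\mathbbm{1})$ is a sum of principal series whose Jacquet module has many constituents — so one cannot simply discard them; one must check that the $U_{m,n}$-averaging collapses each $P_{m,n}\cap{}^{w}\UT_{k}$ to the block-diagonal shape after projecting to the Levi, with the sum over $w$ reproducing the single induction from $\UT_{m}\times\UT_{n}$ on the $\UT$ side. To keep this tractable I would first reduce to a spanning set of $\cf(\UT_{k})$ on which both coproducts are combinatorially transparent — the superclass identifier functions $\delta_{\gamma}$, or ordinary conjugacy-class indicators — and carry out the index matching there. (Once multiplicativity is known, one could alternatively check the coalgebra identity on algebra generators of $\cf(\UT_{\bullet})$, since both coproducts are algebra maps into the tensor-product algebra; but a convenient generating set for the inflation product on $\cf(\UT_{\bullet})$ is not obvious, so I would fall back on the direct Mackey computation.)
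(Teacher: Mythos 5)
First, note that this paper does not actually prove the statement: it is imported wholesale from the companion paper \cite{Gaga} (Theorem~6.1), so there is no in-text argument to compare yours against. Judged on its own terms, your proposal correctly sets up the problem --- the grading, unit/counit, and antipode remarks are fine, and your multiplicativity argument is sound: since $\UT_{m+n}$ is the full preimage of $\UT_{m}\times\UT_{n}$ under $P_{m,n}\twoheadrightarrow\GL_{m}\times\GL_{n}$, transitivity of induction plus the commutation of induction with inflation along a quotient gives $\ind^{\GL_{m+n}}_{\UT_{m+n}}\circ\mathrm{infl} = \ind^{\GL_{m+n}}_{P_{m,n}}\circ\mathrm{infl}\circ(\ind^{\GL_{m}}_{\UT_{m}}\otimes\ind^{\GL_{n}}_{\UT_{n}})$, which is exactly compatibility with the two products (granting that the product on $\cf(\UT_{\bullet})$ is the inflation product, as in \cite{Gaga}).

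The genuine gap is the coalgebra half, and you name it yourself: after writing $\res^{\GL_{k}}_{P_{m,n}}\ind^{\GL_{k}}_{\UT_{k}}f$ by Mackey's formula over the Grassmannian double-coset representatives, you never show that applying the $U_{m,n}$-average and passing to the Levi collapses the sum over $w$ into $(\ind^{\GL_{m}}_{\UT_{m}}\otimes\ind^{\GL_{n}}_{\UT_{n}})$ applied to the $U_{m,n}$-averaged restriction of $f$. That identification of index sets and matching of the group-order constants \emph{is} the theorem; as written, the proposal is a program for the hard half rather than a proof of it, and the non-identity Bruhat cells contribute nontrivially (as you observe already for $f=\mathbbm{1}$), so nothing cancels for free. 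Your fallback reduction is also shaky: the superclass identifiers $\delta_{\gamma}$ span only $\scf(\UT_{\bullet})$, whereas the theorem concerns all of $\cf(\UT_{\bullet})$, and ordinary conjugacy-class indicators of $\UT_{n}$ are exactly the objects this paper describes as combinatorially intractable, so passing to either basis does not make the coset bookkeeping any more transparent --- one still has to carry out the same double-coset/averaging computation that \cite{Gaga} performs. To complete the argument you would need to do that computation explicitly (for instance by evaluating both sides of the comultiplicativity identity at an arbitrary Levi element and exhibiting a measure-preserving bijection between the two parameter sets), or else dualize via Frobenius reciprocity and prove the equivalent statement that restriction $\res^{\GL}_{\UT}$ intertwines the adjoint products; either way the substantive work remains to be done.
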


The homomorphism $\ind_{\UT}^{\GL}$ also induces a linear map on dual spaces.  Using the identification in~\eqref{eq:classindicator}, the dual of the direct sum $\cf(\GL_{\bullet})$ becomes a product
\[
\cf(\GL_{\bullet})^{\ast}  = \prod_{n \ge 0} \cf(\GL_{n})^{\ast} = \Big\{ \big( \ket{\chi_{n}}  \big)_{n \ge 0} \;\Big|\; \chi_{n} \in \cf(\GL_{n}) \Big\}.
\]
Making the analogous identification for $\cf(\UT_{\bullet})^{\ast}$ and $\scf(\UT_{\bullet})^{\ast}$, Frobenius reciprocity gives that, 
\[
\big( \ket{\chi_{n}}  \big)_{n \ge 0}  \circ \ind_{\UT}^{\GL} = \big( \ket{\res^{\GL_{n}}_{\UT_{n}}(\chi_{n})}  \big)_{n \ge 0}.
\]
Furthermore, if $\res^{\GL_{n}}_{\UT_{n}}(\chi_{n}) \in \scf(\UT_{\bullet})$ for each $n \ge 0$, the same equation applies when considering each side as an element of $\scf(\UT_{\bullet})^{\ast}$.

\section{The chromatic quasisymmetric function as a $\GL_{n}$ character}
\label{sec:mainresult1}

This section will state and prove Theorem~\ref{thm:CQSbijection}, following some initial context.  Recall the Hopf algebras $\scf(\UT_{\bullet})$ and $\cf(\UT_{\bullet})$ from Section~\ref{sec:repHopf}.  
The Hopf algebra of \emph{$\GL$-class functions with unipotent support} is the image
\[
\cfunsupp(\GL_{\bullet}) = \ind^{\GL}_{\UT}(\cf(\UT_{\bullet})) \subseteq \cf(\GL_{\bullet}).
\]
Zelevinsky~\cite{Zel} has defined a Hopf algebra isomorphism $\canoUS: \cfunsupp(\GL_{\bullet}) \shortto \Sym$ which will be used in the theorem; see Section~\ref{sec:cfunsupp}.  
Finally, for each indifference graph $\gamma$, recall the subgroup $\UT_{\gamma}$ defined in Section~\ref{sec:sct}, and let $X_{\gamma}(\mathbf{x}; t)$ denote the chromatic quasisymmetric function of $\gamma$ in an indeterminate `$t$', which will be formally defined in Section~\ref{sec:CQS}.


\begin{thm}
\label{thm:CQSbijection}
For $n \ge 0$ and $\gamma \in \III\GGG_{n}$,
\[
\ind^{\GL_{n}}_{\UT_{\gamma}}(\mathbbm{1})
 = (q-1)^{n} \, \canoUS^{-1}\big(X_{\gamma}(\mathbf{x}; q)\big).
\]
\end{thm}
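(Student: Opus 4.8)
The plan is to prove the identity entirely inside the combinatorial Hopf algebra formalism of Theorem~\ref{thm:universalhopfmap}, by exhibiting $\canoUS \circ \ind^{\GL}_{\UT}$ as a canonical CHA morphism and matching it with Guay--Paquet's realization of the chromatic quasisymmetric function. The first move is to pass to the supercharacter side: transitivity of induction together with \eqref{eq:permtoind} gives
\[
\ind^{\GL_n}_{\UT_\gamma}(\mathbbm{1}) \;=\; \ind^{\GL_n}_{\UT_n}\!\big(\ind^{\UT_n}_{\UT_\gamma}(\mathbbm{1})\big) \;=\; \ind^{\GL}_{\UT}(\permchar^\gamma) \;=\; q^{|E(\gamma)|}\,\ind^{\GL}_{\UT}(\permind_\gamma),
\]
so the whole statement is controlled by the composite $\Phi := \canoUS \circ \ind^{\GL}_{\UT}\colon \cf(\UT_\bullet) \to \Sym \hookrightarrow \QSym$. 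Since $\Phi$ is built from the Hopf homomorphism $\ind^{\GL}_{\UT}$, the Zelevinsky isomorphism $\canoUS$, and the (CHA) inclusion $\Sym \hookrightarrow \QSym$, it is itself a graded Hopf algebra homomorphism, hence by the uniqueness clause of Theorem~\ref{thm:universalhopfmap} it equals the canonical CHA morphism $\cano$ attached to the linear character $\zeta := \operatorname{ps}_{1} \circ \Phi$ on $\cf(\UT_\bullet)$.

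The technical heart of the proof --- which I would isolate as Theorem~\ref{thm:canoCQS} --- is to compute $\zeta$ on a basis, the claim being that on the $n$th graded piece $\zeta = (q-1)^n\,\delta_{([n],\emptyset)}^{\ast}$, i.e.\ $\zeta$ extracts $(q-1)^n$ times the coefficient of the superclass identifier function $\delta_{([n],\emptyset)}$. To establish this I would first record, from Zelevinsky's characteristic-map dictionary, that $\operatorname{ps}_{1} \circ \canoUS$ on $\cfunsupp(\GL_\bullet)$ is an inner product $\langle\,\cdot\,,\xi_n\rangle_{\GL_n}$ against an explicit unipotently supported class function $\xi_n$ of $\GL_n$ (the first principal specialization singles out a power-sum type element). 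Frobenius reciprocity then turns the computation inside out,
\[
\zeta_n(\psi) \;=\; \big\langle \ind^{\GL_n}_{\UT_n}(\psi),\, \xi_n \big\rangle_{\GL_n} \;=\; \big\langle \psi,\, \res^{\GL_n}_{\UT_n}(\xi_n) \big\rangle_{\UT_n},
\]
so what remains is to restrict $\xi_n$ to $\UT_n$ and expand it in the superclass identifier basis $\{\delta_\sigma : \sigma \in \III\GGG_n\}$, the assertion being $\res^{\GL_n}_{\UT_n}(\xi_n) = (q-1)^n\,\delta_{([n],\emptyset)}$ (note $\UT^\circ_{([n],\emptyset)}$ is the superclass of elements with all superdiagonal entries nonzero, the unique superclass avoided by every proper normal pattern subgroup). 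I expect this restriction computation to be the main obstacle: it requires threading together Zelevinsky's symmetric-function dictionary, the principal specialization, and the Aliniaeifard--Thiem superclass combinatorics, and in particular pinning down the scalar $(q-1)^n$, which is the source of that factor in the final statement.

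Once $\zeta$ is identified, the proof closes by comparison. Restricting $\cano_{(\cf(\UT_\bullet),\zeta)}$ to the sub-Hopf algebra $\scf(\UT_\bullet)$ and transporting along the isomorphism $\gamma \mapsto \permind_\gamma$ of Theorem~\ref{thm:scfNG} produces a CHA morphism out of Guay--Paquet's specialized Hopf algebra; I would check that its structure linear character is precisely the one Guay--Paquet uses in \cite{GP16} to construct the chromatic quasisymmetric function, so that uniqueness in Theorem~\ref{thm:universalhopfmap} forces $\canoUS\big(\ind^{\GL}_{\UT}(\permind_\gamma)\big)$ to be Guay--Paquet's canonical image of $\gamma$ --- the chromatic quasisymmetric function evaluated at $t \mapsto q^{-1}$, up to Guay--Paquet's normalization. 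Substituting back into the first display, the remaining work is purely bookkeeping of scalars: the factor $q^{|E(\gamma)|}$ picked up from $\permchar^\gamma = q^{|E(\gamma)|}\permind_\gamma$, the palindromicity $X_\gamma(\mathbf{x};q^{-1}) = q^{-|E(\gamma)|}X_\gamma(\mathbf{x};q)$ for indifference graphs, and the constants in $\zeta$ and in Guay--Paquet's map combine to leave exactly $\ind^{\GL_n}_{\UT_\gamma}(\mathbbm{1}) = (q-1)^n\,\canoUS^{-1}\big(X_\gamma(\mathbf{x};q)\big)$.
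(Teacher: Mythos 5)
Your proposal follows essentially the same route as the paper's proof: transitivity of induction, identifying $\canoUS \circ \ind^{\GL}_{\UT}$ as the unique CHA morphism attached to the linear character that is $(q-1)^{n}\,\delta_{([n],\emptyset)}^{\ast}$ in degree $n$ (the paper's Theorem~\ref{thm:canoCQS} and Proposition~\ref{prop:CQSchar}), comparison with Guay-Paquet's character $\zeta_{0}$ via the uniqueness in Theorem~\ref{thm:universalhopfmap}, and then the bookkeeping with $\permchar^{\gamma} = q^{|E(\gamma)|}\permind_{\gamma}$ and the Shareshian--Wachs identity $q^{|E(\gamma)|}X_{\gamma}(\mathbf{x};q^{-1}) = X_{\gamma}(\mathbf{x};q)$. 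The restriction step you flag as the main obstacle is handled in the paper by one observation plus elementary counting: the minimal superclass $\UT^{\circ}_{([n],\emptyset)}$ is exactly the set of regular unipotent elements lying in $\UT_{n}$, so $\res^{\GL_{n}}_{\UT_{n}}(\delta_{(n)}) = \delta_{([n],\emptyset)}$, and the scalar $(q-1)^{n}$ comes from the ratio $\langle \delta_{([n],\emptyset)}, \delta_{([n],\emptyset)}\rangle / \langle \delta_{(n)}, \delta_{(n)}\rangle$ computed from the orders of $O_{(n)}$ and $\UT^{\circ}_{([n],\emptyset)}$.
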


I will describe briefly how the results in this section prove Theorem~\ref{thm:CQSbijection}.  
Define a Hopf algebra homomorphism $\canoCQS\colon \scf(\UT_{\bullet}) \shortto \QSym$ as the composite map in the diagram
\begin{equation}
\label{eq:CQSdiagram}
\begin{tikzpicture}[baseline = -1.2cm]
\node at (0, 0) (UT) {$ \scf(\UT_{\bullet})$};
\node at (0, -2) (GL) {$\cfunsupp(\GL_{\bullet})$};
\node at (4, -2) (Sym) {$\Sym$};
\node at (8, -2) (QSym) {$\QSym$};
\draw[thick, -latex] (UT) -- node[left] {$\ind^{\GL}_{\UT}$} (GL); 
\draw[thick, -latex, dashed] (UT) -- node[above] {$\canoCQS$} (QSym); 
\draw[thick, -latex] (GL) -- node[below] {$\canoUS$} node[above]  {$\cong$} (Sym); 
\draw[thick,  right hook-latex] (Sym) -- node[below] {inclusion} (QSym); 
\end{tikzpicture}
\end{equation}
of Hopf algebra homomorphisms.  By the transitivity of induction, the theorem is equivalent to computing the image of the character $\permchar^{\gamma} = \ind^{\UT_{n}}_{\UT_{\gamma}}(\mathbbm{1}) \in \scf(\UT_{\bullet})$ under $\canoCQS$.  

Recalling the theory of combinatorial Hopf algebras from Section~\ref{sec:CHAs}, there is a unique combinatorial Hopf algebra structure on $\scf(\UT_{\bullet})$ for which $\canoCQS$ is a CHA morphism to $(\QSym, \operatorname{ps}_{1})$, and this structure is given by a linear character of the Hopf algebra $\scf(\UT_{\bullet})$.  
Theorem~\ref{thm:canoCQS} computes this linear character, and Proposition~\ref{prop:CQSchar} shows that it is essentially the same as one defined by Guay-Paquet in~\cite{GP16}.  
This leads to a formula for $\canoCQS$ on the basis $\{ \permind_{\gamma} \;|\; \gamma \in \III\GGG\}$ of $\scf(\UT_{\bullet})$ from Section~\ref{sec:sct}, stated formally in Corollary~\ref{cor:GPidentity}:
\begin{equation}
\label{eq:GPidentity}
\canoCQS(\permind_{\gamma}) = (q-1)^{n}X_{\gamma}(\mathbf{x}; q^{-1}) \qquad \text{for $\gamma \in \III\GGG_{n}$}.
\end{equation}
From here, the theorem follows from an identity of Shareshian--Wachs~\cite{ShWa}.  Recalling from Section~\ref{sec:sct} that $\permchar^{\gamma} = q^{|E(\gamma)|} \permind_{\gamma}$,~\cite[Proposition 2.6]{ShWa} reformulats Equation~\eqref{eq:GPidentity} as
\[
\canoCQS(\permchar^{\gamma}) 
= 
(q-1)^{n} q^{|E(\gamma)|}  X_{\gamma}(\mathbf{x}; q^{-1}) 
= 
(q-1)^{n} X_{\gamma}(\mathbf{x}; q).
\]


The results used in the proof are given in the remainder of this section, which comprises two subsections.  
Section~\ref{sec:cfunsupp} describes the linear characters of the Hopf algebras $\cfunsupp(\GL_{\bullet})$ and $\scf(\UT_{\bullet})$ needed to make Diagram~\eqref{eq:CQSdiagram} a diagram of combiantorial Hopf algebras.  
Then, Section~\ref{sec:CQS} uses results from~\cite{GP16} and Section~\ref{sec:prelims} to describe the chromatic quasisymmetric function as the image of a CHA morphism from $\scf(\UT_{\bullet})$ and subsequently shows that up to a power of $(q-1)$ this map coincides with $\canoCQS$.

\subsection{Factoring $\canoCQS$ through $\cfunsupp(\GL_{\bullet})$}
\label{sec:cfunsupp}


Say that an element $g \in \GL_{n}$ is \emph{unipotent} if $g$ is conjugate to an element of $\UT_{n}$, so that by definition the sub-Hopf algebra $\cfunsupp(\GL_{\bullet})$ from Section~\ref{sec:repHopf} is 
\[
\cfunsupp(\GL_{\bullet}) = \bigoplus_{n \ge 0} \{\psi \in \cf(\GL_{n}) \;|\; \text{$\psi(h) = 0$ for $h \in \GL_{n}$ not unipotent}\}.
\]
The conjugacy classes of unipotent elements in $\GL_{n}$ are indexed by $\PPP_{n}$: the partition $\lambda = (\lambda_{1}, \lambda_{2}, \ldots, \lambda_{\ell})$ corresponds to the conjugacy class $O_{\lambda}$ of the Jordan matrix
\[
J_{\lambda} = \begin{bmatrix} J_{\lambda_{1}} & 0 & \cdots & 0 \\ 0 & J_{\lambda_{2}} & \cdots & 0 \\ \vdots & \vdots & \ddots & \vdots \\ 0 & 0 & \cdots & J_{\lambda_{\ell}} \end{bmatrix} 
\qquad \text{with} \qquad 
J_{k} = \begin{bmatrix} 
1 & 1 & 0 & \cdots & 0 \\ 
0 & 1 & 1 & \cdots & 0 \\ 
\vdots & \vdots & \vdots & \ddots &  \vdots \\ 
0 & 0 & 0 & \cdots & 1\end{bmatrix}. 
\]
Thus, $\cfunsupp(\GL_{\bullet})$ has a $\PPP$-indexed basis of identifier functions $\delta_{\lambda} = \delta_{O_{\lambda}}$ for unipotent conjugacy classes,
\[
\cfunsupp(\GL_{\bullet}) = \CC\spanning\{ \delta_{\lambda} \;|\; \lambda \in \PPP \}.
\]
Zelevinsky~\cite[\nopp 10.13]{Zel} (see also \cite[\nopp IV.4.1]{Mac}) constructs a graded Hopf algebra isomorphism
\begin{equation}
\label{eq:canoUCmap}
\begin{array}{rcl}
\canoUS\colon \cfunsupp(\GL_{\bullet}) & \to & \Sym \\[0.5em]
\delta_{\lambda} & \mapsto & \widetilde{P_{\lambda}}(\mathbf{x}; q) = q^{-n(\lambda)} P_{\lambda}(\mathbf{x}; q^{-1})
\end{array}
\end{equation}
where $P_{\lambda}(\mathbf{x}; t) \in \Sym[t]$ is the Hall--Littlewood polynomial~\cite[\nopp III.2]{Mac} and $n(\lambda) = \sum_{i = 1}^{\lambda_{1}} \binom{\lambda'_{i}}{2}$.


In the framework of Theorem~\ref{thm:universalhopfmap}, the isomorphism $\canoUS$ is equivalent to a linear character of $\cfunsupp(\GL_{\bullet})$.   
This datum was also determined by Zelevinsky in~\cite{Zel}.  
The \emph{regular} unipotent elements of $\GL_{n}$ are the members of the conjugacy class $O_{(n)}$.  Using the notation of Section~\ref{sec:repHopf}, define a linear functional
\[
\ket{\zetaUS} = \big(\ket{\delta^{\ast}_{\USind}}\big)_{n \ge 0} \in \cf(\GL_{\bullet})^{\ast}\qquad \text{with} \qquad  \ket{\delta^{\ast}_{\USind}} = \frac{\ket{\delta_{\USind}}}{\langle \delta_{\USind}, \delta_{\USind} \rangle},
\]
so that for $\psi \in \cf(\GL_{n})$, the value of $\ket{\zetaUS}(\psi)$ is the the value of $\psi$ at any regular unipotent element, $\psi(J_{(n)})$.  By embedding $\cfunsupp(\GL_{\bullet})$ into $\cf(\GL_{\bullet})$, $\ket{\zetaUS}$ is also a linear functional on $\cfunsupp(\GL_{\bullet})$.

\begin{prop}[{\cite[\nopp 10.8]{Zel}}]
\label{prop:cfunsupp}
The map $\ket{\zetaUS}$ is a linear character of the Hopf algebra $\cfunsupp(\GL_{\bullet})$, and moreover $\canoUS$ is the unique CHA morphism $(\cfunsupp(\GL_{\bullet}), \ket{\zetaUS}) \shortto (\Sym, \operatorname{ps}_{1})$.
\end{prop}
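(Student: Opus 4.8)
The plan is to derive both assertions from the explicit formula for $\canoUS$ in \eqref{eq:canoUCmap} together with Theorem~\ref{thm:universalhopfmap}, so that the whole statement reduces to one computation on the basis $\{\delta_{\lambda}\}$ of $\cfunsupp(\GL_{\bullet})$. The identity I would establish first is the equality of linear functionals
\[
\operatorname{ps}_{1} \circ \canoUS = \ket{\zetaUS}
\]
on $\cfunsupp(\GL_{\bullet})$, where $\canoUS$ is composed with the inclusion $\Sym \hookrightarrow \QSym$; everything else then follows formally.

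To prove this identity I would evaluate both sides on $\delta_{\lambda}$ for an arbitrary $\lambda \in \PPP(n)$. On one side, $\ket{\zetaUS}(\delta_{\lambda}) = \delta_{\lambda}(J_{(n)})$ equals $1$ when $O_{\lambda} = O_{(n)}$ and $0$ otherwise, i.e.\ it is $1$ exactly when $\lambda = (n)$. On the other side, the prescribed values of $\operatorname{ps}_{1}$ on the $M_{\alpha}$ identify it with the specialization $x_{1} \mapsto 1$, $x_{i} \mapsto 0$ for $i \ge 2$, so $\operatorname{ps}_{1}(\canoUS(\delta_{\lambda})) = \widetilde{P_{\lambda}}(1,0,0,\dots;q) = q^{-n(\lambda)} P_{\lambda}(1,0,0,\dots;q^{-1})$. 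Here $P_{(n)}$ in one variable is $x_{1}^{n}$, while $P_{\lambda}$ vanishes in fewer than $\ell(\lambda)$ variables \cite[\nopp III.2]{Mac}, so the one-variable specialization is $1$ when $\lambda = (n)$ and $0$ for every other $\lambda \in \PPP(n)$; and $n((n)) = 0$, so the prefactor is trivial in the surviving case. Thus the two functionals agree on the basis, hence everywhere. The first assertion is now immediate: $\ket{\zetaUS} = \operatorname{ps}_{1} \circ \canoUS$ is a composite of algebra homomorphisms --- $\canoUS$ is a graded Hopf algebra isomorphism, $\Sym \hookrightarrow \QSym$ is one, and $\operatorname{ps}_{1}$ is a linear character of $\QSym$ --- hence is itself a linear character of $\cfunsupp(\GL_{\bullet})$.

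For the ``moreover'' clause, the same identity exhibits $\canoUS$ as a CHA morphism $(\cfunsupp(\GL_{\bullet}), \ket{\zetaUS}) \to (\Sym, \operatorname{ps}_{1})$: it is a graded Hopf algebra homomorphism with $\operatorname{ps}_{1} \circ \canoUS = \ket{\zetaUS}$. Uniqueness I would obtain by composing with the CHA morphism $(\Sym, \operatorname{ps}_{1}) \hookrightarrow (\QSym, \operatorname{ps}_{1})$: any CHA morphism from $(\cfunsupp(\GL_{\bullet}), \ket{\zetaUS})$ to $(\Sym, \operatorname{ps}_{1})$, postcomposed with this inclusion, lands in $(\QSym, \operatorname{ps}_{1})$, where Theorem~\ref{thm:universalhopfmap} permits only one map; since the inclusion is injective, $\canoUS$ is the unique CHA morphism into $(\Sym, \operatorname{ps}_{1})$ (and $\canoUS$ followed by the inclusion is the universal map $\cano$ for $(\cfunsupp(\GL_{\bullet}), \ket{\zetaUS})$). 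The hard part is genuinely just the basis computation, so the main obstacle is assembling the needed facts about Hall--Littlewood polynomials from \cite[\nopp III.2]{Mac}. A more self-contained but heavier alternative would bypass \eqref{eq:canoUCmap} and instead prove directly that $\psi \mapsto \psi(J_{(n)})$ is multiplicative for parabolic induction: one checks that whenever a conjugate of $J_{(m+n)}$ lies in the parabolic with Levi $\GL_{m} \times \GL_{n}$, its image in the Levi is a regular unipotent --- necessarily in $O_{(m)} \times O_{(n)}$ --- and that the contributing conjugates carry total weight $1$ in the induced-character formula; there the delicate point is counting the regular-unipotent orbit against a parabolic subgroup.
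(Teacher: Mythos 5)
Your proposal is correct. Note that the paper itself gives no argument for this statement --- it is quoted directly from Zelevinsky \cite[\nopp 10.8]{Zel} --- so there is no internal proof to compare against; you have supplied a proof where the paper supplies a citation. Your reduction is the right one: since $\cfunsupp(\GL_{\bullet})$ is spanned by the $\delta_{\lambda}$, the identity $\operatorname{ps}_{1}\circ\canoUS = \ket{\zetaUS}$ follows from the two evaluations you give, namely $\delta_{\lambda}(J_{(n)}) = [\lambda = (n)]$ and $\widetilde{P_{\lambda}}(1,0,0,\dots;q) = [\lambda=(n)]$ (using that $P_{\lambda}$ vanishes in fewer than $\ell(\lambda)$ variables and $n((n))=0$), and your identification of $\operatorname{ps}_{1}$ with the specialization $x_{1}\shortmapsto 1$, $x_{i}\shortmapsto 0$ for $i\ge 2$ matches the definition on the $M_{\alpha}$. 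The deduction of both assertions from this identity --- linearity of the character as a composite of algebra maps, existence of the CHA morphism, and uniqueness by postcomposing with the inclusion $(\Sym,\operatorname{ps}_{1})\hookrightarrow(\QSym,\operatorname{ps}_{1})$ and invoking Theorem~\ref{thm:universalhopfmap} --- is exactly the pattern the paper itself uses later (e.g.\ in Corollary~\ref{cor:GPidentity}), so the argument is both sound and stylistically consistent with the rest of the text. The only caveat is that your entire argument presupposes the cited fact that $\canoUS$ as defined in~\eqref{eq:canoUCmap} is a graded Hopf algebra homomorphism; that is legitimately part of the citation to \cite{Zel}, and your sketched ``heavier alternative'' is what one would need to make the statement fully self-contained.
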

%

Now consider the Hopf algebra $\scf(\UT_{\bullet})$.  
Recall that $([n], \emptyset)$ is the minimal indifference graph on $n$ vertices and define a linear functional
\[
\ket{\zetaCQSbullet} = \bigg((q-1)^{n} \ket {\delta_{\UTnInd}^{\ast} }\bigg)_{n \ge 0} \hspace{-0.5em} \in \scf(\UT_{\bullet})^{\ast} \qquad \text{with} \qquad  \ket{\delta^{\ast}_{\UTnInd}} = \frac{\ket{\delta_{\UTnInd}}}{\langle \delta_{\UTnInd}, \delta_{\UTnInd} \rangle}.
\]

\begin{rem}
There is an unfortunate conincidence of notation between the class functions $\delta^{\ast}_{\USind}$ and $\delta_{\UTnInd}^{\ast}$, and care should be taken to distinguish between the two: up to normalization $\delta^{\ast}_{\USind}$ is the $\GL_{n}$-class function which identifies the conjugacy class $O_{(n)}$ of regular unipotent elements, and $\delta_{\UTnInd}^{\ast}$ is the $\UT_{n}$-class function which identifies the superclass
\[
\UT_{\UTnInd}^{\circ} = \{X \in \UT_{n} \;|\; \text{$X_{i, i+1} \neq 0$ for $1 \le i < n$}\}.
\]
However, the two are closely related, as described in the proof of Theorem~\ref{thm:canoCQS} below.
\end{rem}

\begin{thm}
\label{thm:canoCQS}
The linear functional $\ket{\zetaCQSbullet}$ is a linear character of $\scf(\UT_{\bullet})$.  Moreover,
\[
\ket{\zetaCQSbullet} = \ket{ \delta_{(\bullet)}^{\ast}} \circ \ind^{\GL}_{\UT},
\]
so $\ind^{\GL}_{\UT}$ is a CHA morphism
\[
\big(\scf(\UT_{\bullet}), \ket{\zetaCQSbullet} \big) \xrightarrow{\;\; \ind^{\GL}_{\UT} \;\;} \big( \cf(\GL_{\bullet}), \ket{ \delta_{(\bullet)}^{\ast}} \big).
\]
\end{thm}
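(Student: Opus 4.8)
The plan is to prove the identity $\ket{\zetaCQSbullet} = \ket{\delta_{(\bullet)}^{\ast}} \circ \ind^{\GL}_{\UT}$ first, and then deduce everything else: since $\ket{\delta_{(\bullet)}^{\ast}}$ is a linear character of $\cf(\GL_{\bullet})$ (by Proposition~\ref{prop:cfunsupp}, restricted along $\cfunsupp(\GL_{\bullet}) \hookrightarrow \cf(\GL_{\bullet})$, which is enough since the image of $\ind^{\GL}_{\UT}$ lands in $\cfunsupp(\GL_{\bullet})$) and $\ind^{\GL}_{\UT}$ is a Hopf algebra homomorphism, the composite is automatically an algebra homomorphism $\scf(\UT_{\bullet}) \to \CC$, i.e.\ a linear character; and the CHA morphism statement is then just a restatement. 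So the whole theorem reduces to the single computation
\[
\big\langle \res^{\GL_{n}}_{\UT_{n}}(\psi), \delta_{\UTnInd}^{\ast} \big\rangle = (q-1)^{-n}\, \psi(J_{(n)}) \qquad \text{for all } \psi \in \cf(\GL_n),
\]
where I have used Frobenius reciprocity (as recorded at the end of Section~\ref{sec:repHopf}) to rewrite the left side of the desired dual identity as pairing against the restriction.

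The key step is to evaluate $\big\langle \res^{\GL_{n}}_{\UT_{n}}(\psi), \delta_{\UTnInd}^{\ast}\big\rangle$. Unwinding the normalization, $\delta^\ast_{\UTnInd} = \delta_{\UTnInd}/\langle \delta_{\UTnInd}, \delta_{\UTnInd}\rangle = \delta_{\UTnInd} \cdot |\UT_n| / |\UT^\circ_{\UTnInd}|$, so the pairing is $\frac{1}{|\UT^\circ_{\UTnInd}|}\sum_{g \in \UT^\circ_{\UTnInd}} \psi(g)$, the average of $\psi$ over the superclass $\UT^\circ_{\UTnInd} = \{X \in \UT_n : X_{i,i+1} \ne 0 \text{ for } 1 \le i < n\}$. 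The crucial fact I would invoke is that every element of $\UT^\circ_{\UTnInd}$ is a \emph{regular unipotent} element of $\GL_n$ — it has a single Jordan block, because the superdiagonal entries being nonzero forces $(g - 1_n)^{n-1} \ne 0$ — and conversely every regular unipotent element of $\GL_n$ lying in $\UT_n$ is conjugate within $\UT_n$ to such an element; more precisely, $\UT^\circ_{\UTnInd}$ is exactly $\UT_n \cap O_{(n)}$. Since $\psi$ is a class function on $\GL_n$, it is constant on $O_{(n)}$ with value $\psi(J_{(n)})$, so the average is simply $\psi(J_{(n)})$. Hence $\big\langle \res^{\GL_n}_{\UT_n}(\psi), \delta^\ast_{\UTnInd}\big\rangle = \psi(J_{(n)}) = \ket{\delta^\ast_{(n)}}(\psi)$, and multiplying by $(q-1)^n$ gives exactly $\ket{\zetaCQSbullet}$ applied via the $n$th component — wait, this needs care: the claimed identity has the factor $(q-1)^n$ attached to $\ket{\delta^\ast_{\UTnInd}}$ on the $\scf(\UT_\bullet)$ side but \emph{not} on the $\cf(\GL_\bullet)$ side. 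Re-examining: $\ket{\zetaCQSbullet}$ is defined with components $(q-1)^n \ket{\delta^\ast_{\UTnInd}}$, and the target functional is $\ket{\delta^\ast_{(\bullet)}}$ with components $\ket{\delta^\ast_{(n)}} = \ket{\delta^\ast_{\USind}}$; so the identity to check, component-wise precomposed with $\ind^{\GL_n}_{\UT_n}$, is $(q-1)^n \ket{\delta^\ast_{\UTnInd}} = \ket{\delta^\ast_{(n)}} \circ \ind^{\GL_n}_{\UT_n} \circ (\text{correction})$ — actually the dual map sends $\ket{\delta^\ast_{(n)}}$ to $\ket{\res\,\delta^\ast_{(n)}} = \ket{\delta^\ast_{\USind} \circ \res}$, and I must show this equals $(q-1)^n \ket{\delta^\ast_{\UTnInd}}$ as functionals on $\scf(\UT_n)$. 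The discrepancy factor of $(q-1)^n$ must therefore come from comparing the two normalizations $\langle \delta_{(n)}, \delta_{(n)}\rangle_{\GL_n}$ versus $\langle \delta_{\UTnInd}, \delta_{\UTnInd}\rangle_{\UT_n}$ together with the ratio $|O_{(n)} \cap \UT_n| / |O_{(n)}|$ of fixed-point counts; I would compute $|O_{(n)}| = |\GL_n|/|Z_{\GL_n}(J_{(n)})|$ with $|Z_{\GL_n}(J_{(n)})| = q^{n-1}(q-1)$, and $|\UT^\circ_{\UTnInd}| = (q-1)^{n-1} q^{\binom n 2 - (n-1)} = (q-1)^{n-1} q^{\binom{n-1}{2}}$, and check the bookkeeping produces precisely $(q-1)^n$.

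The main obstacle is exactly this normalization bookkeeping: correctly tracking the three scalars — the inner-product normalizations defining $\delta^\ast$ on each side, the index $|\GL_n : \UT_n|$ implicit in how $\res$ and the pairings interact, and the sizes of the relevant (super)classes — so that the factor comes out to be $(q-1)^n$ and not some other power of $q-1$ or $q$. The conceptual content, namely that $\UT^\circ_{\UTnInd}$ is the intersection of $\UT_n$ with the regular unipotent class of $\GL_n$ and hence that restriction-then-average-over-this-superclass equals evaluation at $J_{(n)}$, is short; I expect the remark following the theorem statement (which already flags the close relationship between $\delta^\ast_{\USind}$ and $\delta^\ast_{\UTnInd}$) is pointing at precisely this, and I would structure the proof around making that relationship quantitatively precise.
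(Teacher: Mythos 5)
Your proposal, as self-corrected partway through, is essentially the paper's own proof: the paper likewise deduces the first and third assertions from the dual identity, uses that $\UT^{\circ}_{\UTnInd}$ is exactly the set of regular unipotent elements of $\UT_n$ (so $\delta_{\UTnInd} = \res^{\GL_n}_{\UT_n}(\delta_{(n)})$), applies Frobenius reciprocity, and obtains the factor as the normalization ratio $\langle \delta_{\UTnInd}, \delta_{\UTnInd}\rangle / \langle \delta_{(n)}, \delta_{(n)}\rangle = (q-1)^n$ from precisely the class-size formulas you cite. Only your first displayed reduction is misstated --- the average of a $\GL_n$-class function $\psi$ over $\UT^{\circ}_{\UTnInd}$ is $\psi(J_{(n)})$ with no factor of $(q-1)^{-n}$ --- but your subsequent re-examination identifies the correct identity ($\ket{\res\,\delta^{\ast}_{(n)}} = (q-1)^n \ket{\delta^{\ast}_{\UTnInd}}$ on $\scf(\UT_n)$) and the bookkeeping you outline does yield $(q-1)^n$, matching the paper.
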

\begin{proof}
The first and third assertions follow from the second.  The proof of the second assertion will make use of the fact that the superclass $\UT_{\UTnInd}^{\circ}$ is also the set of all regular unipotent elements in $\UT_{n}$, so that $\delta_{\UTnInd} = \res^{\GL_{n}}_{\UT_{n}} ( \delta_{(n)} )$.

For $\gamma \in \III\GGG_{n}$, Frobenius reciprocity (as described in Section~\ref{sec:repHopf}) gives
\[
\ket{ \delta_{(\bullet)}^{\ast}} \circ \ind^{\GL}_{\UT} (\permind_{\gamma}) 
= \frac{\ket{\res^{\GL_{n}}_{\UT_{n}} ( \delta_{(n)} )}(\permind_{\gamma})}{\langle \delta_{(n)}, \delta_{(n)} \rangle} 
= \frac{\langle \permind_{\gamma},  \delta_{\UTnInd} \rangle}{\langle \delta_{(n)}, \delta_{(n)} \rangle} = \begin{cases} \frac{\langle \delta_{\UTnInd}, \delta_{\UTnInd} \rangle}{\langle \delta_{(n)}, \delta_{(n)} \rangle} & \text{if $\gamma = \UTnInd$,} \\ 0 & \text{otherwise,} \end{cases}
\]
with the last equation following from the definition of $\permind_{\gamma}$, the minimality of $\UTnInd$, and the orthogonality of the superclass identifiers, see Section~\ref{sec:sct}.  
Direct computation then gives that
\[
\frac{\langle \delta_{\UTnInd}, \delta_{\UTnInd} \rangle}{\langle \delta_{(n)}, \delta_{(n)} \rangle} = \frac{|\GL_{n}|}{|O_{(n)}|} \frac{| \UT_{\UTnInd}^{\circ} |}{|\UT_{n}|} = (q-1)^{n},
\]
where the final equality comes from the order formulas 
\[
O_{(n)} = \frac{|\GL_{n}|}{q^{n-1}(q-1)}
\qquad\text{and}\qquad
\UT_{\UTnInd}^{\circ} = (q-1)^{n-1} \frac{|\UT_{n}|}{q^{n-1}}. \qedhere
\]
\end{proof}

Now recall the map $\canoCQS$ defined in Diagram~\ref{eq:CQSdiagram}.  Theorem~\ref{thm:canoCQS} and Proposition~\ref{prop:cfunsupp} give the following.

\begin{cor}
The map $\canoCQS$ is the unique CHA morphism
\[
\canoCQS: \big(\scf(\UT_{\bullet}), \ket{\zetaCQSbullet} \big) \to \big( \QSym, \operatorname{ps}_{1} \big).
\]
\end{cor}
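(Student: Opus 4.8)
The plan is to assemble the corollary directly from the two preceding results by chasing the definitions through Diagram~\eqref{eq:CQSdiagram}. Recall that $\canoCQS$ was \emph{defined} as the composite $\scf(\UT_{\bullet}) \xrightarrow{\ind^{\GL}_{\UT}} \cfunsupp(\GL_{\bullet}) \xrightarrow{\canoUS} \Sym \hookrightarrow \QSym$, and each of the three arrows is a Hopf algebra homomorphism, so $\canoCQS$ is automatically a Hopf algebra homomorphism; what remains is to identify the linear character it pulls back from $(\QSym, \operatorname{ps}_1)$ and to invoke uniqueness.

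First I would record that $\canoUS\colon \cfunsupp(\GL_{\bullet}) \to \Sym$ is, by Proposition~\ref{prop:cfunsupp}, a CHA morphism $(\cfunsupp(\GL_{\bullet}), \ket{\zetaUS}) \to (\Sym, \operatorname{ps}_1)$, and that the inclusion $\Sym \hookrightarrow \QSym$ is a CHA morphism $(\Sym, \operatorname{ps}_1) \to (\QSym, \operatorname{ps}_1)$ (noted in Section~\ref{sec:CHAs}). Composing, the map $\canoUS$ followed by the inclusion is a CHA morphism $(\cfunsupp(\GL_{\bullet}), \ket{\zetaUS}) \to (\QSym, \operatorname{ps}_1)$; equivalently $\operatorname{ps}_1 \circ \canoUS = \ket{\zetaUS}$ on $\cfunsupp(\GL_{\bullet})$, where here I mean $\ket{\zetaUS}$ restricted to $\cfunsupp(\GL_{\bullet})$ as in the remark before Proposition~\ref{prop:cfunsupp}. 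Next, Theorem~\ref{thm:canoCQS} gives that $\ind^{\GL}_{\UT}$ is a CHA morphism $(\scf(\UT_{\bullet}), \ket{\zetaCQSbullet}) \to (\cf(\GL_{\bullet}), \ket{\delta^{\ast}_{(\bullet)}})$, i.e. $\ket{\delta^{\ast}_{(\bullet)}} \circ \ind^{\GL}_{\UT} = \ket{\zetaCQSbullet}$. Since the image of $\ind^{\GL}_{\UT}$ lands in $\cfunsupp(\GL_{\bullet})$ and the functional $\ket{\zetaUS} = \ket{\delta^{\ast}_{(\bullet)}}$ agrees with $\ket{\delta^{\ast}_{(\bullet)}}$ on that image (both evaluate a class function at a regular unipotent element), we get
\[
\operatorname{ps}_1 \circ \canoCQS = \operatorname{ps}_1 \circ (\text{incl}) \circ \canoUS \circ \ind^{\GL}_{\UT} = \ket{\zetaUS} \circ \ind^{\GL}_{\UT} = \ket{\delta^{\ast}_{(\bullet)}} \circ \ind^{\GL}_{\UT} = \ket{\zetaCQSbullet}.
\]
Hence $\canoCQS$ is a CHA morphism $(\scf(\UT_{\bullet}), \ket{\zetaCQSbullet}) \to (\QSym, \operatorname{ps}_1)$, and by Theorem~\ref{thm:universalhopfmap} it is the unique such morphism.

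The only point needing a word of care — and the closest thing to an obstacle — is the bookkeeping around the two notations $\ket{\zetaUS}$ (equivalently $\ket{\delta^{\ast}_{(\bullet)}}$): one must check that the functional on $\cf(\GL_{\bullet})$ appearing in Theorem~\ref{thm:canoCQS} restricts, along the inclusion $\cfunsupp(\GL_{\bullet}) \hookrightarrow \cf(\GL_{\bullet})$, to exactly the linear character of $\cfunsupp(\GL_{\bullet})$ appearing in Proposition~\ref{prop:cfunsupp}. This is immediate from the descriptions given in the text — both send $\psi \mapsto \psi(J_{(n)})$, the value at a regular unipotent element — so the composition of CHA morphisms is legitimate and no normalization is lost. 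Everything else is formal: composites of Hopf algebra homomorphisms are Hopf algebra homomorphisms, composites of CHA morphisms are CHA morphisms, and uniqueness is handed to us by Theorem~\ref{thm:universalhopfmap}.
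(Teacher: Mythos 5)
Your proposal is correct and is exactly the argument the paper intends: the corollary is stated as an immediate consequence of Theorem~\ref{thm:canoCQS} and Proposition~\ref{prop:cfunsupp}, composed with the inclusion $(\Sym,\operatorname{ps}_{1}) \hookrightarrow (\QSym,\operatorname{ps}_{1})$ and finished off by the uniqueness in Theorem~\ref{thm:universalhopfmap}. Your extra care in checking that the functional $\ket{\zetaUS}$ on $\cf(\GL_{\bullet})$ restricts along $\cfunsupp(\GL_{\bullet}) \hookrightarrow \cf(\GL_{\bullet})$ to the linear character of Proposition~\ref{prop:cfunsupp} is precisely the bookkeeping the paper leaves implicit.
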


\begin{rem}
Theorem~\ref{thm:canoCQS} actually establishes the stronger result that $\ket{\zetaCQSbullet}$ is a linear character of $\cf(\UT_{\bullet})$, and that we may extend the domain of the CHA morphisms $\ind^{\GL}_{\UT}$ and $\canoCQS$ to the combinatorial Hopf algebra $\big(\cf(\UT_{\bullet}), \ket{\zetaCQSbullet} \big)$.  
While this level of generality is unnecessary for the scope of this work, it may be of general interest.
\end{rem}

\subsection{The chromatic quasisymmetric function}
\label{sec:CQS}

This section defines the chromatic quasisymmetric function of a graph and describes how it can be realized as the image of a character of $\GL_{n}(\FF_{q})$ under a particular a CHA morphism, leading to a proof of Theorem~\ref{thm:CQSbijection}.

Let $\gamma$ be a simple, undirected graph with vertex set $[n]$ and edge set $E(\gamma)$.  A \emph{coloring} of $\gamma$ is a function $\kappa: [n] \shortto \ZZ_{>0}$.  A coloring $\kappa$ of $\gamma$ is \emph{proper} if $\kappa(i) \neq \kappa(j)$ for all $\{i, j\} \in E(\gamma)$.  The \emph{$\gamma$-ascent number} of a coloring $\kappa$ is
\begin{equation}
\label{eq:ascstat}
\asc_{\gamma}(\kappa) = \big|\big\{\{i, j\} \in E(\gamma) \;|\; \text{$i < j$ and $\kappa(i) < \kappa(j)$}\big\}\big|.
\end{equation}
For example, if $\kappa: [5] \shortto \ZZ_{> 0}$ is given by $\kappa(1) = 2$, $\kappa(2) = 5$, $\kappa(3) =1$, and $\kappa(4) = 5$, then
\[
\asc_{\hspace{-0.25em}\begin{tikzpicture}[scale = 0.35, baseline = 0.35*-0.25cm]
\draw[fill] (0, 0) circle (2pt) node[inner sep = 1pt] (1) {};
\draw[fill] (1, 0) circle (2pt) node[inner sep = 1pt] (2) {};
\draw[fill] (2, 0) circle (2pt) node[inner sep = 1pt] (3) {};
\draw[fill] (3, 0) circle (2pt) node[inner sep = 1pt] (4) {};
\node[below] at (1) {$\scriptscriptstyle 1$};
\node[below] at (2) {$\scriptscriptstyle 2$};
\node[below] at (3) {$\scriptscriptstyle 3$};
\node[below] at (4) {$\scriptscriptstyle 4$};
\draw (1) to[] (2);
\draw (2) to[]  (3);
\draw (1) to[out = 45, in = 135]  (3);
\draw (3) to[]  (4);
\end{tikzpicture}\hspace{-0.25em}}(\kappa) = \big|\big\{\{1, 2\}, \{3, 4\}\big\}\big| = 2.
\]
In this example, $\kappa$ is a proper coloring of the given graph.

The \emph{chromatic quasisymmetric function} of $\gamma$ is
\[
X_{\gamma}(\mathbf{x}; t) = \sum_{\substack{\kappa: [n] \shortto \ZZ_{>0} \\ \text{proper}}} t^{\asc_{\gamma}(\kappa)} x_{\kappa(1)} x_{\kappa(2)} \dots x_{\kappa(n)} \in \QSym[t],
\]
so that $X_{\gamma}(\mathbf{x}; t)$ is a polynomial in an indeterminate $t$ whose coefficients---by properties of the ascent statistic---are quasisymmetric functions.  For an indifference graph $\gamma \in \III\GGG_{n}$, it is known that these coefficients are elements of $\Sym$~\cite[Theorem 4.5]{ShWa}.  For example,
\[
X_{\hspace{-0.25em}\begin{tikzpicture}[scale = 0.35, baseline = 0.35*-0.35cm]
\draw[fill] (0, 0) circle (2pt) node[inner sep = 1pt] (1) {};
\draw[fill] (1, 0) circle (2pt) node[inner sep = 1pt] (2) {};
\draw[fill] (2, 0) circle (2pt) node[inner sep = 1pt] (3) {};
\node[below] at (1) {$\scriptscriptstyle 1$};
\node[below] at (2) {$\scriptscriptstyle 2$};
\node[below] at (3) {$\scriptscriptstyle 3$};
\draw (1) -- (2);
\draw (2) --  (3);
\end{tikzpicture}\hspace{-0.25em}}(\mathbf{x}; t) = t \; m_{(2, 1)} + (t^{2}+4t+1) \; m_{(1^{3})}.
\]
However, this property is not used, and a novel proof of it follows from Corollary~\ref{cor:GPidentity} below; see Remark~\ref{rem:CQSsymmetry} (R1).

Evaluating the indeterminate $t$ in $X_{\gamma}(\mathbf{x}; t)$ at a complex number gives an actual (quasi-) symmetric function.  For example, $X_{\gamma}(\mathbf{x}; 1)$ is the ordinary chromatic symmetric function of the graph $\gamma$, as defined by Stanley in~\cite{Sta}.  In Theorem~\ref{thm:CQSbijection} the chromatic quasisymmetric functions are evaluated at $q$, the order of the finite field $\FF_{q}$.

In~\cite{GP16}, Guay-Paquet constructs the chromatic quasisymmetric by way of a homomorphism of $\CC[t]$-Hopf algebras.  By evaluating at $t = q^{-1}$ as in Theorem~\ref{thm:scfNG}, this result descends to a Hopf algebra homomorphism $\scf(\UT_{\bullet}) \shortto \QSym$.  Define a linear functional
\[
\begin{array}{rcl}
\zeta_{0}: \scf(\UT_{\bullet}) & \to & \CC \\
\permind_{\gamma} & \mapsto & \begin{cases} 1 & \text{if $\gamma = \UTnInd$,} \\ 0 & \text{otherwise.} \end{cases}
\end{array}
\]

The following theorem is translated from its original context in~\cite{GP16} for the Hopf algebra $\scf(\UT_{\bullet})$ using the Hopf algebra isomorphism in Theorem~\ref{thm:scfNG}.

\begin{thm}[{\cite[Theorem~57]{GP16}}]
\label{thm:GPcano1}
The map $\zeta_{0}$ is a linear character of $\scf(\UT_{\bullet})$, and the unique CHA morphism
\[
\big( \scf(\UT_{\bullet}), \zeta_{0} \big) \to (\QSym, \operatorname{ps}_{1})
\]
is given by
\[
\permind_{\gamma} \mapsto X_{\gamma}(\mathbf{x}; q^{-1}).
\]
\end{thm}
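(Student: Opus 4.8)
The strategy is to obtain the statement by specializing Guay-Paquet's construction \cite[Theorem~57]{GP16} at $t \mapsto q^{-1}$ and transporting the result across the Hopf algebra isomorphism of Theorem~\ref{thm:scfNG}. Recall that in \cite[Section~6]{GP16} Guay-Paquet equips the free $\CC[t]$-module $\CC[t][\III\GGG]$ with a Hopf algebra structure over $\CC[t]$ and a linear character $\zeta_{\mathrm{GP}}$, and proves that the unique CHA morphism $(\CC[t][\III\GGG], \zeta_{\mathrm{GP}}) \to (\QSym[t], \operatorname{ps}_1)$ carries the basis element $\gamma$ to $X_\gamma(\mathbf{x}; t)$; in particular $\gamma \mapsto X_\gamma(\mathbf{x};t)$ is a Hopf algebra homomorphism and $\operatorname{ps}_1 \circ (\gamma \mapsto X_\gamma(\mathbf{x};t)) = \zeta_{\mathrm{GP}}$. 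The character $\zeta_{\mathrm{GP}}$ is the functional that vanishes on every indifference graph other than the edgeless one $([n],\emptyset)$ and takes the value $1$ there; I would begin by recording this and the fact, from \cite[Section~7]{Gaga}, that the $t \mapsto q^{-1}$ specialization of Guay-Paquet's Hopf algebra is precisely the domain of the isomorphism $\Psi\colon \gamma \mapsto \permind_\gamma$ of Theorem~\ref{thm:scfNG}.

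Specializing at $t \mapsto q^{-1}$ then yields an algebra homomorphism $\overline{\zeta_{\mathrm{GP}}}\colon \CC[t][\III\GGG]|_{t = q^{-1}} \to \CC$ and a Hopf algebra homomorphism $\Phi\colon \CC[t][\III\GGG]|_{t = q^{-1}} \to \QSym$ sending $\gamma \mapsto X_\gamma(\mathbf{x}; q^{-1})$, with $\operatorname{ps}_1 \circ \Phi = \overline{\zeta_{\mathrm{GP}}}$, so that $\Phi$ is a CHA morphism $(\CC[t][\III\GGG]|_{t=q^{-1}}, \overline{\zeta_{\mathrm{GP}}}) \to (\QSym, \operatorname{ps}_1)$. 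Under $\Psi$, the functional $\overline{\zeta_{\mathrm{GP}}}$ transports to $\zeta_0$, since $\overline{\zeta_{\mathrm{GP}}}(\Psi^{-1}(\permind_\gamma)) = \overline{\zeta_{\mathrm{GP}}}(\gamma)$ is $1$ when $\gamma = ([n],\emptyset)$ and $0$ otherwise. Hence $\zeta_0 = \overline{\zeta_{\mathrm{GP}}} \circ \Psi^{-1}$ is a composite of algebra homomorphisms, so it is a linear character of $\scf(\UT_\bullet)$, and the composite $\Phi \circ \Psi^{-1}\colon \scf(\UT_\bullet) \to \QSym$ is a CHA morphism $(\scf(\UT_\bullet), \zeta_0) \to (\QSym, \operatorname{ps}_1)$ carrying $\permind_\gamma = \Psi(\gamma)$ to $X_\gamma(\mathbf{x}; q^{-1})$. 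By Theorem~\ref{thm:universalhopfmap} this morphism is the unique one, which is exactly the claimed description.

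The content is entirely Guay-Paquet's; what remains is verification rather than discovery. The main obstacle, a bookkeeping one, is to unwind Guay-Paquet's definitions far enough to confirm the two identifications used above: that his distinguished linear character is the ``edgeless graph'' functional, and that the $t \mapsto q^{-1}$ specialization of his Hopf algebra agrees with the one in Theorem~\ref{thm:scfNG} under $\gamma \leftrightarrow \permind_\gamma$. Both are carried out in \cite{Gaga}, so in the write-up the proof amounts to citing those facts together with Theorem~\ref{thm:universalhopfmap}. (Alternatively, one could bypass Guay-Paquet's $\CC[t]$-level statement by invoking the explicit formula for the canonical CHA morphism from \cite{AgBerSot} and checking directly that it commutes with the specialization $t \mapsto q^{-1}$, but the route above is shorter.)
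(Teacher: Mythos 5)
Your proposal is correct and matches the paper's treatment: the paper does not reprove this result but presents it exactly as you do, as Guay-Paquet's Theorem~57 specialized at $t \mapsto q^{-1}$ and transported to $\scf(\UT_{\bullet})$ via the isomorphism $\gamma \mapsto \permind_{\gamma}$ of Theorem~\ref{thm:scfNG}, with uniqueness supplied by Theorem~\ref{thm:universalhopfmap}. The identifications you flag as remaining bookkeeping (that Guay-Paquet's distinguished character is the edgeless-graph functional and that his specialized Hopf algebra is the one in Theorem~\ref{thm:scfNG}) are precisely the content of the cited results of \cite{Gaga}, so nothing further is needed.
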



Along with Theorem~\ref{thm:universalhopfmap}, this result is the key to  compute the image of the map $\canoCQS$ defined at the outset of Section~\ref{sec:mainresult1}.  Recall the linear character $\ket{\zetaCQSbullet}$ of the Hopf algebra $\scf(\UT_{\bullet})$ defined in Section~\ref{sec:cfunsupp}.


\begin{prop}
\label{prop:CQSchar}
Let $\gamma$ be an indifference graph of size $n \ge 0$.  Then
\[
\ket{\zetaCQSbullet } (\permind_{\gamma}) 
= \begin{cases} (q-1)^{n} & \text{if $\gamma = \UTnInd$,} \\ 0 & \text{otherwise.} \end{cases}
\]
\end{prop}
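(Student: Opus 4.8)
The plan is to unwind the definition of $\ket{\zetaCQSbullet}$ and reduce the claim to the orthogonality properties of the superclass identifier functions $\delta_\sigma$ already established in Section~\ref{sec:sct}. Recall that $\ket{\zetaCQSbullet} = \big((q-1)^n \ket{\delta^\ast_{\UTnInd}}\big)_{n\ge 0}$, so for $\gamma \in \III\GGG_n$ we have
\[
\ket{\zetaCQSbullet}(\permind_\gamma) = (q-1)^n \, \frac{\langle \permind_\gamma, \delta_{\UTnInd} \rangle}{\langle \delta_{\UTnInd}, \delta_{\UTnInd}\rangle}.
\]
So the whole proposition comes down to showing $\langle \permind_\gamma, \delta_{\UTnInd}\rangle / \langle \delta_{\UTnInd}, \delta_{\UTnInd}\rangle$ equals $1$ when $\gamma = \UTnInd$ and $0$ otherwise. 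This is exactly the computation that already appears inside the proof of Theorem~\ref{thm:canoCQS}, so in a sense the cleanest proof just cites that theorem: Theorem~\ref{thm:canoCQS} asserts $\ket{\zetaCQSbullet} = \ket{\delta^\ast_{(\bullet)}} \circ \ind^{\GL}_{\UT}$, and the displayed line in its proof shows precisely that $\ket{\delta^\ast_{(\bullet)}} \circ \ind^{\GL}_{\UT}(\permind_\gamma)$ is $(q-1)^n$ or $0$ according to whether $\gamma = \UTnInd$.

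If instead I want a self-contained argument, I would expand $\permind_\gamma = \sum_{\sigma \supseteq \gamma} \delta_\sigma$ (the definition from Section~\ref{sec:sct}) and use that the $\delta_\sigma$ are pairwise orthogonal. Then $\langle \permind_\gamma, \delta_{\UTnInd}\rangle = \sum_{\sigma \supseteq \gamma} \langle \delta_\sigma, \delta_{\UTnInd}\rangle = \langle \delta_{\UTnInd}, \delta_{\UTnInd}\rangle$ if $\UTnInd \supseteq \gamma$ and $0$ otherwise. But $\UTnInd = ([n],\emptyset)$ is the \emph{minimal} indifference graph under the $\supseteq$ order on $\III\GGG_n$ (equivalently, $\UT_{\UTnInd} = \UT_n$ is the maximal normal pattern subgroup), so $\UTnInd \supseteq \gamma$ forces $\gamma = \UTnInd$. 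Dividing by $\langle \delta_{\UTnInd}, \delta_{\UTnInd}\rangle$ and multiplying by $(q-1)^n$ gives exactly the stated case distinction.

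There is really no hard part here — the statement is a bookkeeping consequence of definitions already laid out. The only point requiring a moment's care is keeping straight the two conventions in play: the partial order on indifference graphs is \emph{dual} to the spanning-subgraph relation (so "$\supseteq$" in "$\sigma \supseteq \gamma$" corresponds to $\UT_\sigma \subseteq \UT_\gamma$, i.e. $\sigma$ a spanning subgraph of $\gamma$), and consequently the edgeless graph $\UTnInd$ sits at the \emph{bottom}. Once that is pinned down, the minimality argument is immediate and the proposition follows. I expect the author simply invokes Theorem~\ref{thm:canoCQS} (or its proof) together with the orthogonality of the $\delta_\sigma$, and notes that the normalization constant $\langle \delta_{\UTnInd}, \delta_{\UTnInd}\rangle$ cancels.
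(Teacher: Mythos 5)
Your proof is correct and follows essentially the same route as the paper: expand $\permind_{\gamma} = \sum_{\sigma \supseteq \gamma} \delta_{\sigma}$, apply orthogonality of the superclass identifiers, and invoke the minimality of $\UTnInd$ under the spanning-subgraph order so that only $\gamma = \UTnInd$ survives, with the normalization $\langle \delta_{\UTnInd}, \delta_{\UTnInd} \rangle$ cancelling. (Minor aside: in your parenthetical on conventions, $\sigma \supseteq \gamma$ corresponds to $\gamma$ being a spanning subgraph of $\sigma$, not the reverse, but this does not affect your argument.)
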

\begin{proof}
By definition,  $\permind_{\gamma} = \sum_{\sigma \supseteq \gamma} \delta_{\sigma}$.  Explicit computation then gives
\[
\frac{\langle (q-1)^{n} \delta_{\UTnInd},  \permind_{\gamma} \rangle }{\langle \delta_{\UTnInd}, \delta_{\UTnInd} \rangle} 
=(q-1)^{n} \frac{\sum_{\sigma \supseteq \gamma} \langle \delta_{\UTnInd}, \delta_{\sigma} \rangle}{\langle \delta_{\UTnInd}, \delta_{\UTnInd} \rangle}.
\]
Using the orthogonality of the basis $\{\delta_{\gamma} \;|\; \gamma \in \III\GGG_{n}\}$ and the minimality of $\UTnInd$ under the spanning subgraph order on $\III\GGG_{n}$, the above expression reduces to the desired formula.
\end{proof}

Thus, on homogeneous elements of degree $n$, the characters $\ket{\zetaCQSbullet}$ and $\zeta_{0}$ only differ by a factor of $(q-1)^{n}$.  This leads to the following result, which is a restatement of Equation~\eqref{eq:GPidentity} and accordingly the last step in the proof of Theorem~\ref{thm:CQSbijection}.

\begin{cor}
\label{cor:GPidentity}
Let $\gamma$ be an indifference graph of size $n \ge 0$.  Then
\[
\canoCQS(\permind_{\zeta}) = (q - 1)^{n} X_{\gamma}(\mathbf{x}; q^{-1}).
\]
\end{cor}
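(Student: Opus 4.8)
The plan is to deduce this from the uniqueness clause of Theorem~\ref{thm:universalhopfmap} together with the observation made just above the statement: the linear characters $\ket{\zetaCQSbullet}$ and $\zeta_{0}$ of $\scf(\UT_{\bullet})$ differ only by a rescaling of the grading. For $c \in \CC^{\times}$, let $\mathrm{sc}_{c}\colon \scf(\UT_{\bullet}) \to \scf(\UT_{\bullet})$ be the linear map acting as multiplication by $c^{n}$ on the homogeneous component of degree $n$. First I would check that $\mathrm{sc}_{c}$ is a graded Hopf algebra automorphism; this is a one-line verification on each structure map, using only that the product, coproduct, unit and counit of the graded connected Hopf algebra $\scf(\UT_{\bullet})$ are graded (so the antipode is handled automatically).

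Next I would show that $\mathrm{sc}_{q-1}$ is an isomorphism of combinatorial Hopf algebras $(\scf(\UT_{\bullet}), \ket{\zetaCQSbullet}) \to (\scf(\UT_{\bullet}), \zeta_{0})$, i.e.\ that $\ket{\zetaCQSbullet} = \zeta_{0} \circ \mathrm{sc}_{q-1}$. Since both sides are linear functionals, it suffices to compare them on the basis $\{\permind_{\gamma} \mid \gamma \in \III\GGG\}$: for $\gamma \in \III\GGG_{n}$ the right-hand side equals $(q-1)^{n}\zeta_{0}(\permind_{\gamma})$, which is $(q-1)^{n}$ when $\gamma = \UTnInd$ and $0$ otherwise, matching Proposition~\ref{prop:CQSchar}. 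Composing $\mathrm{sc}_{q-1}$ with the CHA morphism $(\scf(\UT_{\bullet}), \zeta_{0}) \to (\QSym, \operatorname{ps}_{1})$ of Theorem~\ref{thm:GPcano1} therefore produces a CHA morphism $(\scf(\UT_{\bullet}), \ket{\zetaCQSbullet}) \to (\QSym, \operatorname{ps}_{1})$.

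To finish, I would invoke Theorem~\ref{thm:universalhopfmap}: there is a unique such morphism, and by Section~\ref{sec:cfunsupp} that morphism is $\canoCQS$; hence $\canoCQS$ equals the composite just built. Evaluating on $\permind_{\gamma}$ with $\gamma \in \III\GGG_{n}$ and using Theorem~\ref{thm:GPcano1} gives
\[
\canoCQS(\permind_{\gamma}) = (q-1)^{n}\, X_{\gamma}(\mathbf{x}; q^{-1}),
\]
which is the claim. I do not expect a genuine obstacle: once Proposition~\ref{prop:CQSchar} is in hand the argument is purely formal, and the only points requiring (routine) care are confirming that $\mathrm{sc}_{q-1}$ respects every Hopf structure map and that the uniqueness being used is precisely the instance of Theorem~\ref{thm:universalhopfmap} for the CHA $(\scf(\UT_{\bullet}), \ket{\zetaCQSbullet})$. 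Equivalently, one can bypass the automorphism $\mathrm{sc}_{q-1}$ altogether: define $\Psi\colon \scf(\UT_{\bullet}) \to \QSym$ to act on the degree-$n$ component as $(q-1)^{n}$ times the Guay--Paquet morphism of Theorem~\ref{thm:GPcano1}, check directly that $\Psi$ is a graded Hopf algebra homomorphism with $\operatorname{ps}_{1}\circ\Psi = \ket{\zetaCQSbullet}$, and conclude $\Psi = \canoCQS$ by the same uniqueness — this is the same proof stated without naming the grading automorphism.
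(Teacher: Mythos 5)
Your proposal is correct and is essentially the paper's own argument: the paper likewise defines the degree-$n$-rescaled Guay--Paquet map $\permind_{\gamma} \shortmapsto (q-1)^{n} X_{\gamma}(\mathbf{x}; q^{-1})$, checks via Proposition~\ref{prop:CQSchar} that composing with $\operatorname{ps}_{1}$ recovers $\ket{\zetaCQSbullet}$, and concludes by the uniqueness in Theorem~\ref{thm:universalhopfmap} that this map equals $\canoCQS$. Your packaging of the rescaling as the grading automorphism $\mathrm{sc}_{q-1}$ is only a cosmetic variant, as you yourself note in the final sentence.
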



\begin{proof}
By comparison with the Hopf algebra homomorphism in Theorem~\ref{thm:GPcano1}, it is clear that the given map is a graded Hopf algebra homomorphism, and further, that
\[
\operatorname{ps}_{1}\big((q - 1)^{n} X_{\gamma}(\mathbf{x}; q^{-1})\big) = (q-1)^{n} \zeta_{0}\big( \permind_{\gamma} \big) = \ket{\zetaCQSbullet } (\permind_{\gamma}).
\]
Thus, the given map is a CHA morphism 
\[
\big(\scf(\UT_{\bullet}), \ket{\zetaCQSbullet} \big) \to \big( \QSym, \operatorname{ps}_{1} \big).
\]
By Theorem~\ref{thm:universalhopfmap}, the above map must be equal to $\canoCQS$.
\end{proof}


\begin{rems}
\label{rem:CQSsymmetry}
\begin{enumerate}[label = (R\arabic*)]
\item As the image of $\canoCQS$ is $\Sym \subseteq \QSym$, Corollary~\ref{cor:GPidentity} gives a novel proof that the coefficients of $X_{\gamma}(\mathbf{x}; t)$ are symmetric functions.

\item Proposition~\ref{prop:CQSchar} also shows that $\zeta_{0} = \big(\ket{\delta_{\UTnInd}^{\ast} }\big)_{n \ge 0}$; however this fact seems not to have any representation theoretic significance beyond its relation to the proof above.
\end{enumerate}
\end{rems}

\section{Connections to Hessenberg varieties}
\label{sec:Hessenberg}


This section will describe the relationship between the characters $\ind^{\GL_{n}}_{\UT_{\gamma}}(\mathbbm{1})$ in Theorem~\ref{thm:CQSbijection}, certain Hessenberg varieties over $\FF_{q}$, and the analogous Hessenberg varieties over $\CC$.  
These results follow a short overview of Hessenberg varieties.  
Throughout, the algebraic groups defined over $\FF_{q}$ in Section~\ref{sec:sct} and their analogues over $\CC$ are used, so the underling field will be explicitly written for each such group to avoid confusion.  

Take a field $\KK \in \{\FF_{q}, \CC\}$, and for $n \ge 0$ let $B_{n}(\KK)$ denote the subgroup of upper triangular matrices in $\GL_{n}(\KK)$.  
For each subspace $M \subseteq \Mat_{n}(\KK)$ which is stable under conjugation by elements of $B_{n}(\KK)$ and each matrix $A \in \Mat_{n}(\KK)$, the \emph{Hessenberg variety} associated to $A$ and $M$ is
\[
\BBB^{M}_{A} = \{g B_{n}(\KK) \in \GL_{n}(\KK)/B_{n}(\KK) \;|\; g^{-1} A g \in M\}.
\]
This is a slight variation---apparently due to~\cite{Tymoczko}---of the original definition in~\cite{DPS}, which requires that $M$ contain all upper triangular matrices.  The generalization is crucial, since the following results exclusively concern Hessenberg varieties associated to strictly upper triangular subspaces known as \emph{ad-nilpotent ideals}.  For $\gamma \in \III\GGG_{n}$, let
\begin{align*}
\mathfrak{ut}_{\gamma}(\KK) &= \{A \in \Mat_{n}(\KK) \;|\; \text{$A_{i, j} \neq 0$ only if $i < j$ and $(i, j) \notin \gamma$}\} \\
&= \UT_{\gamma}(\KK) - 1.
\end{align*}
These sets are in fact ideals in the algebra (and Lie algebra) of upper triangular matrices.  
Key examples of the Hessenberg varieties of the form $\BBB^{\mathfrak{ut}_{\gamma}(\KK)}_{A}$ have been known for some time, but a specific study of these varieties is quite recent; see~\cite{JiPrecup, PrecupSommers} and references therein.

\begin{prop}
\label{prop:HessenbergPoints}
Let $n \ge 0$ and $\gamma \in \III\GGG_{n}$.  For any $A \in \Mat_{n}(\FF_{q})$ with $1 + A \in \GL_{n}(\FF_{q})$,
\[
\ind^{\GL_{n}(\FF_{q})}_{\UT_{\gamma}(\FF_{q})}(\mathbbm{1})(1+A) = (q-1)^{n} q^{|E(\gamma)|} |\BBB^{\mathfrak{ut}_{\gamma}(\FF_{q})}_{A}|.
\]
\end{prop}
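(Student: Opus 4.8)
The plan is to compute the left-hand side directly from the definition of induced characters and then recognize the resulting count as a product of an orbit size (or normalizer index) and a Hessenberg variety point count. Recall that for $H \le G$ and the trivial character $\mathbbm{1}$ of $H$, the induced character evaluated at $g \in G$ is $\ind^{G}_{H}(\mathbbm{1})(g) = \frac{|G|}{|H|}\,\frac{|\{x \in G/H \;|\; x^{-1}gx \in H\}|}{|G|} \cdot |G|^{?}$; more precisely $\ind^{G}_{H}(\mathbbm{1})(g)$ counts cosets $xH$ with $x^{-1}gx \in H$, up to the standard normalization: $\ind^{G}_{H}(\mathbbm{1})(g) = \#\{xH \in G/H \;|\; x^{-1} g x \in H\}$ when $H$ is... (in general one must be careful, but for $\mathbbm{1}$ the permutation-character formula $\ind^{G}_{H}(\mathbbm{1})(g) = |\{xH \in G/H : x^{-1}gx \in H\}|$ holds exactly). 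Applying this with $G = \GL_{n}(\FF_{q})$, $H = \UT_{\gamma}(\FF_{q})$, and $g = 1 + A$, the value $\ind^{\GL_{n}}_{\UT_{\gamma}}(\mathbbm{1})(1+A)$ equals the number of cosets $x\,\UT_{\gamma}(\FF_{q})$ in $\GL_{n}(\FF_{q})/\UT_{\gamma}(\FF_{q})$ such that $x^{-1}(1+A)x \in \UT_{\gamma}(\FF_{q})$, equivalently $x^{-1}A x \in \mathfrak{ut}_{\gamma}(\FF_{q})$.

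The next step is to compare the set $\{x\,\UT_{\gamma}(\FF_{q}) : x^{-1}Ax \in \mathfrak{ut}_{\gamma}(\FF_{q})\}$ with the Hessenberg variety $\BBB^{\mathfrak{ut}_{\gamma}(\FF_{q})}_{A} = \{g B_{n}(\FF_{q}) : g^{-1}Ag \in \mathfrak{ut}_{\gamma}(\FF_{q})\}$. The key point is that $\mathfrak{ut}_{\gamma}(\FF_{q})$ is stable under conjugation not just by $\UT_{\gamma}(\FF_{q})$ but by the whole Borel $B_{n}(\FF_{q})$ — indeed it is stable under conjugation by $B_n$ because it is an (ad-nilpotent) ideal in the Lie algebra of $B_n$, as remarked in the excerpt. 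Therefore the condition $x^{-1}Ax \in \mathfrak{ut}_{\gamma}(\FF_{q})$ depends only on the coset $xB_{n}(\FF_{q})$, and the natural projection $\GL_{n}/\UT_{\gamma} \to \GL_{n}/B_{n}$ restricts to a surjection from $\{x\UT_{\gamma} : x^{-1}Ax \in \mathfrak{ut}_{\gamma}\}$ onto $\BBB^{\mathfrak{ut}_{\gamma}(\FF_{q})}_{A}$ whose fibers all have size $|B_{n}(\FF_{q}) : \UT_{\gamma}(\FF_{q})|$. This index is $|B_{n}(\FF_{q}) : \UT_{n}(\FF_{q})| \cdot |\UT_{n}(\FF_{q}) : \UT_{\gamma}(\FF_{q})| = (q-1)^{n} \cdot q^{|E(\gamma)|}$, using $|B_n : \UT_n| = (q-1)^n$ (the torus) and the formula $|\UT_n : \UT_\gamma| = q^{|E(\gamma)|}$ recorded in Section~\ref{sec:sct}. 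Combining, $\ind^{\GL_{n}}_{\UT_{\gamma}}(\mathbbm{1})(1+A) = (q-1)^{n} q^{|E(\gamma)|} |\BBB^{\mathfrak{ut}_{\gamma}(\FF_{q})}_{A}|$, as claimed.

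The main obstacle is making the fiber argument airtight: one must verify that every coset $gB_n$ in $\BBB^{\mathfrak{ut}_\gamma}_A$ actually lifts to at least one coset $x\UT_\gamma$ in the numerator set (this is automatic, since any representative $g$ works) and that the fiber over $gB_n$ is exactly $\{gb\,\UT_\gamma : b \in B_n\}$, which has size $|B_n : \UT_\gamma|$ independent of $g$ — the independence being precisely where $B_n$-stability of $\mathfrak{ut}_\gamma$ is used, so that $gb \cdot x^{-1} \cdots$ all satisfy the defining condition simultaneously. A secondary point to state carefully is the permutation-character identity $\ind^G_H(\mathbbm{1})(g) = \#\{xH : x^{-1}gx \in H\}$; this is standard but worth citing, and it is exactly the content that $\ind^G_H(\mathbbm{1})$ is the character of the $G$-action on $G/H$, whose value at $g$ is the number of fixed points of $g$ on $G/H$. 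Everything else is the elementary index bookkeeping above.
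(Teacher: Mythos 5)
Your proof is correct and follows essentially the same route as the paper: interpret the induced trivial character as a fixed-coset count, observe that the condition depends only on the $B_{n}(\FF_{q})$-coset because $\mathfrak{ut}_{\gamma}(\FF_{q})$ (equivalently $\UT_{\gamma}(\FF_{q})$) is stable under $B_{n}(\FF_{q})$-conjugation, and multiply by the fiber size $|B_{n}(\FF_{q}):\UT_{\gamma}(\FF_{q})| = (q-1)^{n}q^{|E(\gamma)|}$. The only blemish is the garbled first attempt at stating the induced-character formula, which you then replace with the correct permutation-character identity, so nothing is missing.
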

\begin{proof}
The proof will compute the left side of the equation directly.  Equation~\eqref{eq:permtoind} and the standard formula for induced character values give
\[
\ind^{\GL_{n}(\FF_{q})}_{\UT_{\gamma}(\FF_{q})}(\mathbbm{1})(1+A) = 
 | \{  \text{$h\UT_{\gamma}(\FF_{q}) \in \GL_{n}(\FF_{q}) / \UT_{\gamma}(\FF_{q})$} \;|\; h^{-1}(1+A)h \in \UT_{\gamma}(\FF_{q}) \}|.
\]
Each left $B_{n}(\FF_{q})$ coset in $\GL_{n}(\FF_{q})$ comprises $q^{|E(\gamma)|} (q-1)^{n}$ left $\UT_{\gamma}(\FF_{q})$ cosets, and for each $h \UT_{\gamma}(\FF_{q}) \subseteq g B_{n}(\FF_{q})$, it is the case that $h^{-1}(1+A)h \in \UT_{\gamma}(\FF_{q})$ if and only if $g^{-1}(1+A)g \in \UT_{\gamma}(\FF_{q})$:  $\UT_{\gamma}(\FF_{q})$ is normalized by $B_{n}(\FF_{q})$.  Finally, $g^{-1}(1+A)g \in \UT_{\gamma}(\FF_{q})$ if and only if $g^{-1}(1+A)g \in \mathfrak{ut}_{\gamma}(\FF_{q})$.
\end{proof}

Together with Theorem~\ref{thm:CQSbijection}, Proposition~\ref{prop:HessenbergPoints} points to a relationship between the chromatic quasisymmetric function and Hessenberg varieties for ad-nilpotent ideals over $\FF_{q}$.  More precisely, define a polynomial $d_{\lambda}^{\gamma}(t)$ by
\begin{equation}
\label{eq:dlambdadef}
X_{\gamma}(\mathbf{x}; t) = \sum_{\lambda \in \PPP_{n}} d_{\lambda}^{\gamma}(t) \widetilde{P_{\lambda}}(\mathbf{x}; t),
\end{equation}
where $\widetilde{P_{\lambda}}(\mathbf{x}; t)$ is the modified Hall--Littlewood polynomial from Section~\ref{sec:cfunsupp}.   
The aforementioned results show that $d_{\lambda}^{\gamma}(q) =  q^{|E(\gamma)|} |\BBB^{\mathfrak{ut}_{\gamma}(\FF_{q})}_{A}|$.  

The polynomials $d_{\lambda}^{\gamma}(t)$ also appear in the complex geometry of Hessenberg varieties for ad-nilpotent subspaces in a manner discovered by Precup and Sommers in~\cite{PrecupSommers}.  For the following theorem, note that the similarity classes of nilpotent matrices over any field are indexed by partitions of $n$: the class indexed by $\lambda \in \PPP_{n}$ consists of elements similar to $J_{\lambda} - 1$ as in Section~\ref{sec:cfunsupp}.

\begin{thm}[{\cite[Equation (4.7)]{PrecupSommers}}]
\label{thm:precupsommers}
For $n \ge 0$, take $\gamma \in \III\GGG_{n}$ and $\lambda \in \PPP_{n}$.  Then 
\[
\sum_{k \ge 0} \beta_{k}^{\lambda} t^{k/2} =  t^{-|E(\gamma)|} d_{\lambda}^{\gamma}(t),
\]
where $\beta_{k}^{\lambda}$ denotes the $k$th Betti number of $\BBB^{\mathfrak{ut}_{\gamma}(\CC)}_{A}$ for any nilpotent matrix $A \in \Mat_{n}(\CC)$ in the similarity class indexed by $\lambda$.
\end{thm}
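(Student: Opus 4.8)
\medskip
\noindent\textbf{Proof plan.}
The plan is to reduce the statement to a point count over $\FF_q$ that is already in hand. Theorem~\ref{thm:CQSbijection}, the isomorphism~\eqref{eq:canoUCmap} (which sends $\delta_\lambda$ to $\widetilde{P_\lambda}(\mathbf{x};q)$), and the defining expansion~\eqref{eq:dlambdadef} together give $\ind^{\GL_n}_{\UT_\gamma}(\mathbbm{1}) = (q-1)^n\sum_{\mu\in\PPP_n} d_\mu^\gamma(q)\,\delta_\mu$; evaluating at $1+A$ for $A$ nilpotent of Jordan type $\lambda$ and comparing with Proposition~\ref{prop:HessenbergPoints} recovers the identity
\[
d_\lambda^\gamma(q) \;=\; q^{|E(\gamma)|}\,\big|\BBB^{\mathfrak{ut}_\gamma(\FF_q)}_A\big|,
\]
valid for every prime power $q$, already recorded in the discussion after Proposition~\ref{prop:HessenbergPoints}. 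So it remains only to identify $\big|\BBB^{\mathfrak{ut}_\gamma(\FF_q)}_A\big|$ with $\sum_{k\ge 0}\beta_k^\lambda q^{k/2}$.

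For this I would invoke the geometry of nilpotent Hessenberg varieties. The variety $\BBB^{\mathfrak{ut}_\gamma}_A$ (with $A = J_\lambda - 1$) admits a paving by affine cells — this is the affine paving machinery of Tymoczko and Precup that underlies~\cite{PrecupSommers} — and, because $A$ has integer entries and the cells come from a Bia{\l}ynicki--Birula decomposition for a torus defined over the prime field, this paving is by locally closed subschemes over $\ZZ$ whose fibres over $\FF_q$ and over $\CC$ are affine spaces of matching dimension. From an affine paving one reads off at once that the odd cohomology of $\BBB^{\mathfrak{ut}_\gamma(\CC)}_A$ vanishes, that $\beta_{2j}^\lambda$ is the number of $j$-dimensional cells, and that each such cell contributes $q^j$ to the $\FF_q$-point count, so that
\[
\big|\BBB^{\mathfrak{ut}_\gamma(\FF_q)}_A\big| \;=\; \sum_{j\ge 0}\beta_{2j}^\lambda\,q^j \;=\; \sum_{k\ge 0}\beta_k^\lambda\,q^{k/2}.
\]

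Combining the two displays, $d_\lambda^\gamma(q) = q^{|E(\gamma)|}\sum_{k\ge 0}\beta_k^\lambda q^{k/2}$ holds for all prime powers $q$. Since $d_\lambda^\gamma$ is a polynomial and, by the vanishing of odd Betti numbers, $\sum_{k}\beta_k^\lambda t^{k/2}$ is likewise a polynomial in $t$, agreement at infinitely many values forces the polynomial identity $\sum_{k\ge 0}\beta_k^\lambda t^{k/2} = t^{-|E(\gamma)|}d_\lambda^\gamma(t)$. The single genuinely nonformal ingredient is the affine paving of $\BBB^{\mathfrak{ut}_\gamma}_A$ together with its good behaviour under reduction mod $p$; this is where I expect the real work to sit, and I would simply cite~\cite{PrecupSommers} (and Tymoczko) for it rather than reproving it. Everything else is bookkeeping with the results of Section~\ref{sec:mainresult1}.
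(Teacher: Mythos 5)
Two remarks before the substance: the paper contains no proof of Theorem~\ref{thm:precupsommers} at all; it is imported verbatim from \cite[Equation (4.7)]{PrecupSommers}, whose argument is carried out over $\CC$ (Poincar\'e polynomials of the varieties $\BBB^{\mathfrak{ut}_{\gamma}(\CC)}_{A}$, treated by complex-geometric means) and makes no use of finite fields. Your opening reduction is correct and matches what the paper itself records: combining Theorem~\ref{thm:CQSbijection}, the map $\canoUS$ from~\eqref{eq:canoUCmap} and the expansion~\eqref{eq:dlambdadef} with Proposition~\ref{prop:HessenbergPoints} does give $d_{\lambda}^{\gamma}(q) = q^{|E(\gamma)|}\,|\BBB^{\mathfrak{ut}_{\gamma}(\FF_{q})}_{A}|$ for every prime power $q$, and the ``agreement at infinitely many prime powers forces the polynomial identity'' step is fine (granting vanishing of odd Betti numbers, which the complex affine paving of Ji--Precup does supply).

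The genuine gap is the middle step, which is exactly the hard part and cannot be cited as you propose. The identity $|\BBB^{\mathfrak{ut}_{\gamma}(\FF_{q})}_{A}| = \sum_{k}\beta_{k}^{\lambda}q^{k/2}$ is precisely Corollary~\ref{cor:adnilpotentpoincare} of this paper, and there it is \emph{deduced from} Theorem~\ref{thm:precupsommers}; invoking it (or its substance) to prove Theorem~\ref{thm:precupsommers} is circular within the logic of the paper. Nor is it available in \cite{PrecupSommers} or in Tymoczko's work: the affine pavings of nilpotent Hessenberg varieties constructed there live over $\CC$, and converting them into a cell-by-cell count of $\FF_{q}$-points requires showing that the paving is defined over the prime field (or over $\ZZ$) with affine cells of the same dimensions in both characteristics --- a nonformal verification that nobody has written down. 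The paper flags this explicitly: the remark following Corollary~\ref{cor:adnilpotentpoincare} reports Precup's suggestion that ``a careful study of this paving'' could yield such a comparison (and would then independently re-prove Theorem~\ref{thm:CQSbijection}), i.e.\ it is a proposed future route, not a citable fact; and the remark on \cite{EsccobarPrecupShareshian} shows that analogous point-count-equals-Poincar\'e-polynomial statements for Hessenberg varieties can require nontrivial hypotheses on $q$. A further small inaccuracy: these varieties are in general singular, so there is no Bia{\l}ynicki--Birula decomposition to appeal to directly --- the pavings come from intersections with Schubert cells. So as written your proposal assumes the one essential geometric ingredient; to make it a proof you would have to carry out the characteristic-$p$ analysis of the paving yourself, rather than cite \cite{PrecupSommers} for it.
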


Applying the map $\canoUS^{-1}$ defined in Section~\ref{sec:cfunsupp} to Equation~\eqref{eq:dlambdadef}, the left and right hand sides are computed by Theorem~\ref{thm:CQSbijection} and Proposition~\ref{prop:HessenbergPoints} respectively, giving the following.

\begin{cor}
\label{cor:adnilpotentpoincare}
For $n \ge 0$, take $\gamma \in \III\GGG_{n}$ and $\lambda \in \PPP_{n}$.  Let $A \in \Mat_{n}(\FF_{q})$ be a nilpotent elements in similarity class indexed by $\lambda$.  Then
\[
\sum_{k \ge 0} \beta_{k}^{\lambda} q^{k/2}
= |\BBB^{\mathfrak{ut}_{\gamma}(\FF_{q})}_{A}|,
\]
where the numbers $\beta_{k}^{\lambda}$ are as in Theorem~\ref{thm:precupsommers}.  
\end{cor}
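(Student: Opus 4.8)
The plan is to read the corollary off by evaluating the symmetric-function identity of Theorem~\ref{thm:CQSbijection} at a single unipotent element of $\GL_n(\FF_q)$, comparing with Proposition~\ref{prop:HessenbergPoints}, and substituting the outcome into Theorem~\ref{thm:precupsommers}. First I would apply the graded isomorphism $\canoUS^{-1}$ of Section~\ref{sec:cfunsupp} term-by-term to the defining Equation~\eqref{eq:dlambdadef}; since $\canoUS(\delta_\mu) = \widetilde{P_\mu}(\mathbf{x}; q)$ this gives
\[
\canoUS^{-1}\big(X_\gamma(\mathbf{x}; q)\big) = \sum_{\mu \in \PPP_n} d_\mu^\gamma(q)\, \delta_\mu,
\]
and hence, by Theorem~\ref{thm:CQSbijection}, $\ind^{\GL_n}_{\UT_\gamma}(\mathbbm{1}) = (q-1)^n \sum_{\mu \in \PPP_n} d_\mu^\gamma(q)\, \delta_\mu$.

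Now fix a nilpotent $A \in \Mat_n(\FF_q)$ in the similarity class indexed by $\lambda$, so that $1 + A$ lies in the unipotent conjugacy class $O_\lambda$ of $J_\lambda$ (by the Jordan-form description in Section~\ref{sec:cfunsupp}). Evaluating the class function above at $1+A$ kills all terms with $\mu \ne \lambda$ and yields $\ind^{\GL_n}_{\UT_\gamma}(\mathbbm{1})(1+A) = (q-1)^n d_\lambda^\gamma(q)$. On the other hand, Proposition~\ref{prop:HessenbergPoints} applied to the same $A$ gives $\ind^{\GL_n(\FF_q)}_{\UT_\gamma(\FF_q)}(\mathbbm{1})(1+A) = (q-1)^n q^{|E(\gamma)|} |\BBB^{\mathfrak{ut}_\gamma(\FF_q)}_A|$. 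Equating these and cancelling $(q-1)^n$ produces $d_\lambda^\gamma(q) = q^{|E(\gamma)|} |\BBB^{\mathfrak{ut}_\gamma(\FF_q)}_A|$. Finally, setting $t = q$ in Theorem~\ref{thm:precupsommers} gives $\sum_{k \ge 0} \beta_k^\lambda q^{k/2} = q^{-|E(\gamma)|} d_\lambda^\gamma(q) = |\BBB^{\mathfrak{ut}_\gamma(\FF_q)}_A|$, as desired.

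There is essentially no obstacle here: the argument is a bookkeeping chase through identifications already in place, and in fact the statement is little more than a repackaging of Theorems~\ref{thm:CQSbijection} and~\ref{thm:precupsommers} together with Proposition~\ref{prop:HessenbergPoints}. The only points needing a moment's care are that $\canoUS$ may be inverted degreewise on the finite, homogeneous sum in~\eqref{eq:dlambdadef}, and that the conjugacy class $O_\lambda$ of $J_\lambda$ corresponds under $g \mapsto g - 1$ to the nilpotent similarity class indexed by $\lambda$, so that $\delta_\mu(1+A)$ is indeed the indicator of $\mu = \lambda$. It is also worth noting that, although a class function on $\GL_n(\FF_q)$ is determined by all of its values, here only the single value at $1+A$ is actually used.
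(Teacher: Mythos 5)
Your proposal is correct and is essentially the paper's own argument: apply $\canoUS^{-1}$ to Equation~\eqref{eq:dlambdadef} at $t=q$, evaluate the resulting class-function identity at $1+A$ to get $d_\lambda^\gamma(q) = q^{|E(\gamma)|}\,|\BBB^{\mathfrak{ut}_\gamma(\FF_q)}_A|$ via Theorem~\ref{thm:CQSbijection} and Proposition~\ref{prop:HessenbergPoints}, and then substitute into Theorem~\ref{thm:precupsommers}. The paper compresses all of this into the single observation that both sides equal $q^{-|E(\gamma)|}(q-1)^{-n}\ind^{\GL_n(\FF_q)}_{\UT_\gamma(\FF_q)}(\mathbbm{1})(1+A)$, which is exactly the bookkeeping you carried out.
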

\begin{proof}
Both expressions are equal to $q^{-|E(\gamma)|} (q-1)^{-n} \ind^{\GL_{n}(\FF_{q})}_{\UT_{\gamma}(\FF_{q})}(\mathbbm{1})(1 + A)$.
\end{proof}

\begin{rems}
\begin{enumerate}[label=(R\arabic*)]
\item Aside from this paper, I am aware of two works about Hessenberg varieties over $\FF_{q}$.  
The preprint~\cite{EsccobarPrecupShareshian} concerns the Hessenberg variety associated to a split regular element of $\GL_{n}(\FF_{q})$ and a subspace containing all upper triangular matrices; under some nontrivial assumptions on $q$ a result similar to Corollary~\ref{cor:adnilpotentpoincare} is established. 
This generalizes Fulman's use of Weil conjecture machinery on a subset of smooth Hessenberg varieties in order prove some identities on $q$-Eulerian numbers~\cite{Fulman}.

\item In~\cite[]{JiPrecup}, Ji and Precup give a combinatorial formula for the polynomials $d_{\lambda}^{\gamma}(t)$ by constructing an affine paving of $\BBB^{\mathfrak{ut}_{\gamma}(\CC)}_{A}$.  Precup~\cite{PrecupPC} has also suggested that a second proof of Corollary~\ref{cor:adnilpotentpoincare} could be obtained from a careful study of this paving, which would independently re-prove Theorem~\ref{thm:CQSbijection}.

\end{enumerate}
\end{rems}

\section{The vertical strip LLT polynomial as a $\GL_{n}$ character}
\label{sec:mainresult2}

This section gives a second result of the same type as Theorem~\ref{thm:CQSbijection}, in that it interprets a family of $t$-graded symmetric functions as the images of certain $\GL_{n}$ characters obtained by induction from $\UT_{n}$ under a particular isomorphism; see Table~\ref{table:theoremcomparison}.  
%
%
%
%
%
Here, the initial $\UT_{n}$ characters come from a larger set $\{\psi^{\sigma} \;|\; \sigma \in \TTT\SSS\}$ indexed by the set of tall Schr\"{o}der paths $\TTT\SSS$ from Section~\ref{sec:Catalan}, the map to $\Sym$ is a homomorphism $\canoUC: \cf(\GL_{\bullet}) \shortto \Sym$ which records the unipotent constituent of a character, and the symmetric functions are the vertical strip LLT polynomials $G_{\sigma}(\mathbf{x}; t)$, also indexed by the set $\TTT\SSS$.  Each object mentioned will be defined in this section.

\begin{thm}
\label{thm:LLTbijection}
Let $\sigma$ be a tall Schr\"{o}der path.  Then
\[
\canoUC \circ \ind^{\GL}_{\UT} (\psi^{\sigma}) = (q-1)^{|\diag(\sigma)|} \omega G_{\sigma}(\mathbf{x}; q),
\]
where $\diag(\sigma)$ is the set of diagonal steps in $\sigma$.
\end{thm}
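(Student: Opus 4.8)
The plan is to mirror the strategy already used for Theorem~\ref{thm:CQSbijection}: exhibit a combinatorial Hopf algebra structure on $\cf(\UT_\bullet)$ (or a suitable subalgebra containing the $\psi^\sigma$) for which the composite $\canoUC \circ \ind^{\GL}_{\UT}$ is \emph{the} canonical CHA morphism to $(\QSym,\operatorname{ps}_1)$, identify the relevant linear character, and then invoke the uniqueness in Theorem~\ref{thm:universalhopfmap} against a parallel statement lifted from~\cite{GP16} (or an analogue of Theorem~\ref{thm:GPcano1} for LLT polynomials). So the first step is to pin down the map $\canoUC\colon \cf(\GL_\bullet)\to\Sym$ that extracts the unipotent part of a character, verify it is a graded Hopf algebra homomorphism (this should follow from Zelevinsky's PSH decomposition: the unipotent block is a sub-PSH-algebra of $\cf(\GL_\bullet)$ isomorphic to $\Sym$, and projection onto it is a Hopf map), and then compute its associated linear character $\zeta_{\mathrm{LLT}} = \operatorname{ps}_1 \circ \canoUC$. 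Because $\operatorname{ps}_1$ on $\Sym$ picks out the coefficient structure dual to the complete homogeneous generators, $\zeta_{\mathrm{LLT}}$ should be evaluation of the unipotent projection at the regular unipotent class, or something close to it; I would compute it explicitly on a basis.

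Next I would transport this back along $\ind^{\GL}_{\UT}$ using Frobenius reciprocity exactly as in the proof of Theorem~\ref{thm:canoCQS}: $\zeta_{\mathrm{LLT}}\circ \ind^{\GL}_{\UT}$ is restriction to $\UT_n$ of the relevant $\GL_n$ functional, which becomes a concrete linear functional on $\cf(\UT_\bullet)$. The task is then to check that this functional, evaluated on the characters $\psi^\sigma$, produces a linear character matching the one Guay-Paquet uses to generate the vertical strip LLT polynomials (up to the power $(q-1)^{|\diag(\sigma)|}$ and the $\omega$-twist). For the $\omega$ and the power of $q-1$: the $\omega$ should enter because $\canoUS$ and $\canoUC$ differ by the antipode/$\omega$ on the image side (Hall--Littlewood versus Schur normalizations — compare Equation~\eqref{eq:canoUCmap}), and the $(q-1)^{|\diag(\sigma)|}$ factor should come from the $D$-steps contributing an extra $q-1$ each in the relevant superclass order formula, paralleling the $(q-1)^n$ in Proposition~\ref{prop:CQSchar}. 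I would make these precise by a direct computation of $\langle \psi^\sigma, \delta_{([n],\emptyset)}\rangle$-type inner products and the order of the relevant superclasses.

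The genuine obstacles are two. First, I need a clean definition of the characters $\psi^\sigma$ indexed by tall Schr\"oder paths, extending the Catalan supercharacters $\chi^\gamma$ to account for $D$-steps; presumably $\psi^\sigma$ is built from $\chi^{\graph(\sigma\text{'s }E/S\text{ part})}$ twisted by the diagonal data, and I would need to check it lies in a combinatorial Hopf subalgebra of $\cf(\UT_\bullet)$ on which $\ind^{\GL}_{\UT}$ and $\canoUC$ both restrict to CHA morphisms (the remark after Theorem~\ref{thm:canoCQS} already anticipates extending to all of $\cf(\UT_\bullet)$, which helps). Second, and harder, is producing the LLT analogue of Theorem~\ref{thm:GPcano1}: I need to know that $G_\sigma(\mathbf{x};q^{-1})$ (or its $\omega$-twist) is the image of $\psi^\sigma$ under \emph{the} canonical CHA morphism from the appropriate Hopf algebra of tall Schr\"oder paths under a specific linear character — this is the step where I am leaning on~\cite{GP16} or on the known relationship between vertical strip LLT polynomials and chromatic quasisymmetric functions (the plethystic/$t\mapsto$-substitution identities), and verifying the substitution $t = q$ versus $t = q^{-1}$ bookkeeping survives the $\omega$-twist. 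Once those two pieces are in place, uniqueness in Theorem~\ref{thm:universalhopfmap} forces $\canoUC\circ\ind^{\GL}_{\UT}(\psi^\sigma) = (q-1)^{|\diag(\sigma)|}\,\omega G_\sigma(\mathbf{x};q)$ and the proof closes.
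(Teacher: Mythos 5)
Your overall architecture coincides with the paper's: realize $\canoUC \circ \ind^{\GL}_{\UT}$ as the unique CHA morphism attached to a linear character of $\scf(\UT_{\bullet})$, identify that character with the one Guay-Paquet uses to generate unicellular LLT polynomials, and close with known LLT identities. Two of your specific mechanisms are misattributed, however, and the second would derail the computation if carried out as written. The linear character attached to $\canoUC$ is not evaluation at the regular unipotent class (that is the character for $\canoUS$); it is pairing with the Steinberg character, $\ket{\operatorname{St}_{\bullet}}$. Since $\res^{\GL_{n}}_{\UT_{n}}(\operatorname{St}_{n}) = \reg_{\UT_{n}}$, Frobenius reciprocity transports this to $\ket{\reg_{\bullet}}$ on $\scf(\UT_{\bullet})$, i.e.\ evaluation at the identity, which takes the value $1$ on every $\permind_{\gamma}$ --- exactly Guay-Paquet's $\zeta_{1}$, with no $(q-1)$ factors anywhere. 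Consequently your plan to extract $(q-1)^{|\diag(\sigma)|}$ from superclass order formulas and $\langle \psi^{\sigma}, \delta_{([n],\emptyset)}\rangle$-type inner products cannot work: that superclass plays no role on the LLT side, in contrast with Proposition~\ref{prop:CQSchar}.

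Where the factors actually come from is the final bookkeeping step. The CHA uniqueness argument only yields $\canoLLT(\permind_{\graph(\pi)}) = G_{\pi}(\mathbf{x};q^{-1})$ for Dyck paths $\pi$; both the $\omega$ and the $(q-1)^{|\diag(\sigma)|}$ enter afterwards through two combinatorial identities (Proposition~\ref{prop:LLTsieve}). The identity $q^{|\area(\pi)|} G_{\pi}(\mathbf{x};q^{-1}) = \omega G_{\pi}(\mathbf{x};q)$ converts $\permind_{\graph(\pi)}$ to $\permchar^{\graph(\pi)} = q^{|E(\graph(\pi))|}\permind_{\graph(\pi)}$ and supplies the $\omega$; the Carlsson--Mellit inclusion--exclusion
\[
(q-1)^{|\diag(\sigma)|} G_{\sigma}(\mathbf{x};q) = \sum_{S \subseteq \diag(\sigma)} (-1)^{|\diag(\sigma)\setminus S|} G_{\area^{-1}(\area(\sigma)\cup S)}(\mathbf{x};q)
\]
matches term-by-term the definition of $\psi^{\sigma}$ as the corresponding alternating sum of permutation characters $\permchar^{([n],\,\area(\sigma)\cup S)}$, and supplies the power of $q-1$. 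Your fallback --- the plethystic relation between $X_{\gamma}$ and $G_{\pi}$ together with the composite $\canoUC\circ\canoUS^{-1}$ --- is a legitimate alternative the paper explicitly acknowledges, but even on that route you must still linearize the Schr\"{o}der-path case into Dyck-path cases by the same inclusion--exclusion, since the plethystic identity is stated only for Dyck paths.
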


\begin{table}
\renewcommand{\arraystretch}{1.3}
\begin{center}
\begin{tabular}{c || c || c}
& Theorem~\ref{thm:CQSbijection} & Theorem~\ref{thm:LLTbijection} \\ \hline \hline
Indexing set & \makecell{Natural unit interval \\ graphs $\gamma \in \III\GGG_{n}$} & \makecell{Tall Schr\"{o}der \\ paths $\sigma \in \TTT\SSS_{n}$} \\ \hline
$\UT_{n}$-characters & Permutation characters $\permchar^{\gamma}$  & Pseudosupercharacters $\psi^{\sigma}$ \\ \hline
Symmetric functions & \makecell{Chromatic quasisymmetric  \\ functions $X_{\gamma}(\mathbf{x}; t)$} &  \makecell{Vertical strip LLT \\ polynomials $G_{\sigma}(\mathbf{x}; t)$} \\ \hline
Map to $\Sym$ & $\canoUS: \cfunsupp(\GL_{\bullet}) \to \Sym$ & $\canoUC: \cf(\GL_{\bullet}) \to \Sym$ \\ \hline
\makecell{Meaning of \\ map to $\Sym$} & \makecell{Records unipotently supported \\ $\GL_{n}$-class functions} & \makecell{Records the irreducible \\ unipotent constituents}
\end{tabular}
\end{center}
\caption{A comparison of the results of Theorems~\ref{thm:CQSbijection} and~\ref{thm:LLTbijection} in degree $n$.}
\label{table:theoremcomparison}
\end{table}

I will now describe the meaning of this result in greater depth and outline its proof.  An irreducible character of $\GL_{n}$ is \emph{unipotent} if it is a constituent of $\ind^{\GL_{n}}_{B_{n}}(\mathbbm{1})$, where $B_{n} = B_{n}(\FF_{q})$ is the subgroup of upper triangular matrices in $\GL_{n}$.  The space
\[
\cfunchar(\GL_{\bullet}) = \CC\spanning\{\text{irreducible unipotent characters of $\GL_{n}$, $n \ge 0$}\}
\]
is a sub- and quotient Hopf algebra of $\cf(\GL_{\bullet})$, and moreover is isomorphic to $\Sym$.  Consequently, there is a homomorphism $\canoUC \colon \cf(\GL_{\bullet}) \shortto \Sym$ obtianed by projecting onto $\cfunchar(\GL_{\bullet})$ and then applying the aforementioned isomorphism, as in the diagram
\begin{equation}
\label{eq:UCdiagram}
\begin{tikzpicture}[baseline = -1.2cm]
\node at (0, 0) (GL) {$\cf(\GL_{\bullet})$};
\node at (2, -2) (GLunchar) {$\cfunchar(\GL_{\bullet})$};
\node at (4, 0) (Sym) {$\Sym$};
\draw[thick, -latex] (GL) -- node[above] {$\canoUC$} (Sym);
\begin{scope}[transform canvas={xshift = -0.5cm}]
\draw[>=latex, thick, ->>] (GL) -- (GLunchar);
\end{scope}
\begin{scope}[transform canvas={xshift=0cm}]
\draw[thick, left hook-latex] (GLunchar) -- (GL);
\end{scope}
\draw[thick, latex-latex] (GLunchar) -- node[below right] {$\cong$} (Sym); 
\end{tikzpicture}
\end{equation}
of Hopf algebra homomorphisms.  The map $\canoUC$ faithfully records the irreducible unipotent constituents of any class function of $\GL_{n}$, which can be recovered by reversing the isomorphism $\cfunchar(\GL_{\bullet}) \cong \Sym$.  Thus, Theorem~\ref{thm:LLTbijection} states that the vertical strip LLT polynomial $G_{\sigma}(\mathbf{x}; q)$ determines the irreducible unipotent constituents of the character $\ind^{\GL}_{\UT} (\psi^{\sigma})$.

An interesting connection arises from the interplay of Theorems~\ref{thm:CQSbijection} and~\ref{thm:LLTbijection}.  Carlsson and Melit~\cite[Proposition 3.5]{CM} show that for a Dyck path $\pi \in \DDD_{n}$, the plethystic relationship 
\[
(t-1)^{n} X_{\graph(\pi)}(\mathbf{x}; t)\left[\frac{\mathbf{x}}{t-1}\right] = G_{\pi}(\mathbf{x}; t)
\]
holds, where $\graph(\pi)$ is the indifference graph associated to $\pi$ in Section~\ref{sec:Catalan}.  It is also known~\cite[\nopp IV.4]{Mac} that the composite map
\[
\Sym \xrightarrow{\;\; \canoUS^{-1} \;\;} \cfunsupp(\GL_{\bullet}) \xhookrightarrow{\qquad} \cf(\GL_{\bullet}) \xrightarrow{\;\; \canoUC \;\;} \Sym
\]
is an isomoprhism which can be expressed in plethystic notation as $f[\mathbf{x}] \shortmapsto \omega f[\frac{\mathbf{x}}{t-1}]|_{t = q}$, so my results give a $\GL_{n}$-representation theoretic interpretation of Carlsson and Melit's result; at the same time,~\cite[Proposition 3.5]{CM} could be used to prove Theorem~\ref{thm:LLTbijection} via Theorem~\ref{thm:CQSbijection}.

The proof of Theorem~\ref{thm:CQSbijection} in this paper will instead use the machinery of combinatorial Hopf algebras, which has the benefit of giving a new description of the map $\canoUC \circ \ind^{\GL}_{\UT}$.  Define a Hopf algebra homomorphism $\canoLLT\colon \scf(\UT_{\bullet}) \shortto \QSym$ as the composite map in the diagram
\begin{equation}
\label{eq:LLTdiagram}
\begin{tikzpicture}[baseline = -1.2cm]
\node at (0, 0) (UT) {$ \scf(\UT_{\bullet})$};
\node at (0, -2) (GLunsupp) {$\cfunsupp(\GL_{\bullet})$};
\node at (4, -2) (GL) {$\cf(\GL_{\bullet})$};
\node at (8, -2) (Sym) {$\Sym$};
\node at (12, -2) (QSym) {$\QSym$};
\draw[thick, -latex] (UT) -- node[left] {$\ind^{\GL}_{\UT}$} (GLunsupp); 
\draw[thick, right hook-latex] (GLunsupp) -- (GL);
\draw[thick, -latex] (GL) -- node[below] {$\canoUC$} (Sym);
\draw[thick, right hook-latex] (Sym) -- (QSym);
\draw[thick, dashed, -latex] (UT) -- node[above] {$\canoLLT$} (QSym); 
\end{tikzpicture}
\end{equation}
of Hopf algebras, so that Theorem~\ref{thm:LLTbijection} describes $\canoLLT$ implicitly.  By definition, $\canoLLT$ can be computed by inducing a character of $\UT_{n}$ to $\GL_{n}$ and recording its unipotent constituents as symmetric functions.  However, Theorem~\ref{thm:universalhopfmap} shows that $\canoLLT$ is also determined by the linear character $\operatorname{ps}_{1} \circ \canoLLT$ of the Hopf algebra $\scf(\UT_{\bullet})$.  It happens that this linear character coincides exactly with one defined by Guay-Paquet, so that a result of~\cite{GP16}---restated in Corollary~\ref{cor:GPidentityLLT}---shows that
\begin{equation}
\label{eq:GPLLTidentity}
\canoLLT(\permind_{\graph(\pi)}) = G_{\pi}(\mathbf{x}; q^{-1}) \qquad\text{for $\pi \in \DDD$}.
\end{equation}
Finally, several known identities for LLT polynomials complete the proof; these are given in Proposition~\ref{prop:LLTsieve}.

The remainder of the section is divided into three parts.  First, Section~\ref{sec:psisigma} describes the characters $\psi^{\sigma}$ appearing in Theorem~\ref{thm:LLTbijection} and shows that this family includes both the permutation characters and supercharacters of $\scf(\UT_{\bullet})$.  Then, Section~\ref{sec:cfunchar} describes the map $\canoLLT$ as a CHA morphism to $(\QSym, \operatorname{ps}_{1})$, defining the necessary combinatorial Hopf algebra structures on $\scf(\UT_{\bullet})$ and $\cf(\GL_{\bullet})$ along the way.  Finally, Section~\ref{sec:LLT} formally defines the vertical strip LLT polynomial, shows how it can be realized as the image of a CHA morphism, and concludes with a proof of Theorem~\ref{thm:LLTbijection}.

\begin{rem}
\label{rem:LLTnofactor}
It is possible to ``remove'' the factors of $q-1$ in Theorem~\ref{thm:LLTbijection}.  
With results in Section~\ref{sec:psisigma}, work of Andrews and Thiem~\cite[Remark on p.~490]{AnTh} and Aliniaeifard and Thiem~\cite[Remark (1) on p.~13]{AlTh20} show that each $\psi^{\sigma}$ is the sum of $(q-1)^{|\diag(\sigma)|}$ distinct characters which each have the same image under $\canoUC \circ \ind^{\GL}_{\UT}$; this image must be $\omega G_{\sigma}(\mathbf{x}; q)$.
\end{rem}

\subsection{The pseudosupercharacters $\psi^{\sigma}$}
\label{sec:psisigma}

This section will define the characters $\psi^{\sigma}$ appearing in Theorem~\ref{thm:LLTbijection}.  Recall the terminology used for Schr\"{o}der paths in Section~\ref{sec:Catalan} and the characters of $\UT_{n}$ defined in Section~\ref{sec:sct}.

For $\sigma \in \TTT\SSS_{n}$, the \emph{pseudosupercharacter} indexed by $\sigma$ is the class function
\[
\psi^{\sigma} = \sum_{S \subseteq \diag(\sigma)} (-1)^{|\diag(\sigma) \setminus S|} \permchar^{\big([n],\, \area(\sigma) \cup S \big)} \in \scf(\UT_{\bullet}).
\]
The definition of $\diag(\sigma)$ ensures that each graph $([n], \area(\sigma) \cup S)$ above is in fact an indifference graph.  For example, with
\begin{equation}
\label{eq:pseudosupercharacterexample}
\sigma = \begin{tikzpicture}[scale = 0.45, baseline = 0.45*-1.7cm]
\draw[gray] (-0.2, 0.2) grid (3.2, -3.2);
\draw[dashed, gray] (-0.2, 0.2) -- (3.2, -3.2);
\draw[very thick] (0, 0) -- (1, 0) -- (2, -1)  -- (3, -1) -- (3, -3);
\end{tikzpicture}
\qquad\text{we have}\qquad
\psi^{\sigma} = - \permchar^{\begin{tikzpicture}[scale = 0.35, baseline = 0.35*-0.35cm]
\draw[fill] (0, 0) circle (2pt) node[inner sep = 1pt] (1) {};
\draw[fill] (1, 0) circle (2pt) node[inner sep = 1pt] (2) {};
\draw[fill] (2, 0) circle (2pt) node[inner sep = 1pt] (3) {};
\node[below] at (1) {$\scriptscriptstyle 1$};
\node[below] at (2) {$\scriptscriptstyle 2$};
\node[below] at (3) {$\scriptscriptstyle 3$};\draw (2) --  (3);
\end{tikzpicture}\hspace{-0.25em}} + \permchar^{\hspace{-0.25em}\begin{tikzpicture}[scale = 0.35, baseline = 0.35*-0.35cm]
\draw[fill] (0, 0) circle (2pt) node[inner sep = 1pt] (1) {};
\draw[fill] (1, 0) circle (2pt) node[inner sep = 1pt] (2) {};
\draw[fill] (2, 0) circle (2pt) node[inner sep = 1pt] (3) {};
\node[below] at (1) {$\scriptscriptstyle 1$};
\node[below] at (2) {$\scriptscriptstyle 2$};
\node[below] at (3) {$\scriptscriptstyle 3$};
\draw (1) -- (2);
\draw (2) --  (3);
\end{tikzpicture}\hspace{-0.25em}}.
\end{equation}
A noteworthy family of examples is the pseudosupercharacters indexed by Dyck paths: for $\pi \in \DDD$, $\diag(\pi) = \emptyset$, from which it follows that
\[
\psi^{\pi} = \permchar^{\graph(\pi)}.
\]


\begin{prop}
\label{prop:psidecomp}
Let $\sigma$ be a tall Schr\"{o}der path of size $n \ge 0$.  Then $\psi^{\sigma}$ is a character, and in particular
\[
\psi^{\sigma} = \hspace{-1em} \sum_{\substack{ E(\gamma) \subseteq (\area(\sigma) \cup \diag(\sigma)) \\[0.2em] \diag(\sigma) \subseteq E(\gamma) }} \hspace{-1em} \chi^{\rho},
\]
where the sum is over indifference graphs $\gamma \in \III\GGG_{n}$ satisfying the given conditions.  
\end{prop}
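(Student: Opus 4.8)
The plan is to prove the identity by inclusion–exclusion, starting from the definition $\psi^{\sigma} = \sum_{S \subseteq \diag(\sigma)} (-1)^{|\diag(\sigma) \setminus S|} \permchar^{([n], \area(\sigma) \cup S)}$ and then expanding each permutation character $\permchar^{\rho}$ in the basis of supercharacters $\chi^{\gamma}$ via the formula $\permchar^{\rho} = \sum_{\gamma \subseteq \rho} \chi^{\gamma}$ from Section~\ref{sec:sct} (where $\gamma \subseteq \rho$ means $E(\gamma) \subseteq E(\rho)$ under the spanning subgraph order). Substituting this expansion gives
\[
\psi^{\sigma} = \sum_{S \subseteq \diag(\sigma)} (-1)^{|\diag(\sigma) \setminus S|} \sum_{\substack{\gamma \in \III\GGG_{n} \\ E(\gamma) \subseteq \area(\sigma) \cup S}} \chi^{\gamma},
\]
so the coefficient of a fixed $\chi^{\gamma}$ is $\sum_{S} (-1)^{|\diag(\sigma) \setminus S|}$, where $S$ ranges over subsets of $\diag(\sigma)$ with $E(\gamma) \subseteq \area(\sigma) \cup S$, equivalently $E(\gamma) \setminus \area(\sigma) \subseteq S$.

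The next step is to evaluate this alternating sum. Since $\area(\sigma)$ and $\diag(\sigma)$ are disjoint (a Schröder path's diagonal steps pass through squares not below the path), the condition $E(\gamma) \setminus \area(\sigma) \subseteq S \subseteq \diag(\sigma)$ is satisfiable only if $E(\gamma) \setminus \area(\sigma) \subseteq \diag(\sigma)$, i.e. $E(\gamma) \subseteq \area(\sigma) \cup \diag(\sigma)$; this is the first condition in the claimed sum. Assuming it holds, write $T = E(\gamma) \setminus \area(\sigma) \subseteq \diag(\sigma)$ and let $m = |\diag(\sigma) \setminus T|$ count the "free" diagonal squares. The valid $S$ are exactly $T \cup U$ for $U \subseteq \diag(\sigma) \setminus T$, and $|\diag(\sigma) \setminus S| = m - |U|$, so the coefficient is $\sum_{U \subseteq \diag(\sigma) \setminus T} (-1)^{m - |U|} = \sum_{j=0}^{m} \binom{m}{j} (-1)^{m-j} = [m = 0]$. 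Thus $\chi^{\gamma}$ appears (with coefficient $1$) precisely when $m = 0$, i.e. $\diag(\sigma) \setminus T = \emptyset$, which says $\diag(\sigma) \subseteq T \subseteq E(\gamma)$ — the second condition. Combining, $\chi^{\gamma}$ appears with coefficient $1$ iff $\diag(\sigma) \subseteq E(\gamma) \subseteq \area(\sigma) \cup \diag(\sigma)$, and coefficient $0$ otherwise, which is exactly the stated formula. Since each $\chi^{\gamma}$ is an honest character (a nonnegative-integer combination of irreducibles, being $\sum_{\psi \in \widehat{\UT}_{\gamma}^{\circ}} \psi(1)\psi$), the displayed sum shows $\psi^{\sigma}$ is a character.

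I expect the only real subtlety is the bookkeeping around disjointness of $\area(\sigma)$ and $\diag(\sigma)$ and making sure every graph $([n], \area(\sigma) \cup S)$ that occurs is genuinely an indifference graph — but the latter is already asserted in the text immediately after the definition of $\psi^{\sigma}$, and the former is immediate from the geometry of tall Schröder paths (a square cannot be both strictly below $\sigma$ and contain a diagonal step of $\sigma$). Everything else is the standard binomial-theorem collapse of an inclusion–exclusion sum, so there is no serious obstacle; the main care is simply tracking which index set the alternating sum runs over.
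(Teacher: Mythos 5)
Your proposal is correct and follows essentially the same route as the paper: expand each $\permchar^{([n],\area(\sigma)\cup S)}$ in the supercharacter basis, interchange the order of summation, and collapse the alternating sum over subsets of $\diag(\sigma)$ by the binomial theorem, so that $\chi^{\gamma}$ survives exactly when $\diag(\sigma)\subseteq E(\gamma)\subseteq \area(\sigma)\cup\diag(\sigma)$. Your extra remarks on the disjointness of $\area(\sigma)$ and $\diag(\sigma)$ just make explicit a step the paper leaves implicit.
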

\begin{proof}
Using the definition of $\psi^{\sigma}$, 
\[
\psi^{\sigma} =  \sum_{S \subseteq \diag(\sigma)} (-1)^{|\diag(\sigma) \setminus S|} \hspace{-1em} \sum_{E(\gamma) \subseteq \area(\sigma) \cup  S }  \hspace{-1em} \chi^{\gamma},
\]
where the sum is over indifference graphs $\gamma$ as in the proposition.  Combining like terms in the sum above, we obtain
\[
\psi^{\sigma} = \sum_{E(\gamma) \subseteq \area(\sigma) \cup \diag(\sigma) } \left( \sum_{T \supseteq E(\gamma) \cap \diag(\sigma)}  \hspace{-1em} (-1)^{|\diag(\sigma) \setminus T|} \right) \chi^{\gamma}.
\]
The proposition now follows from the binomial theorem.
\end{proof}

As an example of Proposition~\ref{prop:psidecomp}, the pseudosupercharacter in Equation~\ref{eq:pseudosupercharacterexample} expands as the sum of supercharacters
\[
\psi^{\sigma} = 
\chi^{\begin{tikzpicture}[scale = 0.35, baseline = 0.35*-0.35cm]
\draw[fill] (0, 0) circle (2pt) node[inner sep = 1pt] (1) {};
\draw[fill] (1, 0) circle (2pt) node[inner sep = 1pt] (2) {};
\draw[fill] (2, 0) circle (2pt) node[inner sep = 1pt] (3) {};
\node[below] at (1) {$\scriptscriptstyle 1$};
\node[below] at (2) {$\scriptscriptstyle 2$};
\node[below] at (3) {$\scriptscriptstyle 3$};
\draw (1) -- (2);
\end{tikzpicture}\hspace{-0.25em}} 
+ \chi^{\begin{tikzpicture}[scale = 0.35, baseline = 0.35*-0.35cm]
\draw[fill] (0, 0) circle (2pt) node[inner sep = 1pt] (1) {};
\draw[fill] (1, 0) circle (2pt) node[inner sep = 1pt] (2) {};
\draw[fill] (2, 0) circle (2pt) node[inner sep = 1pt] (3) {};
\node[below] at (1) {$\scriptscriptstyle 1$};
\node[below] at (2) {$\scriptscriptstyle 2$};
\node[below] at (3) {$\scriptscriptstyle 3$};
\draw (1) -- (2);
\draw (2) --  (3);
\end{tikzpicture}\hspace{-0.25em}}.
\]

The final result in the section shows that every supercharacter of $\scf(\UT_{n})$ occurs as a pseudosupercharacter.  Given a Dyck path $\pi$, a \emph{peak} of $\pi$ is a sequence of steps $ES$; say that a peak is \emph{tall} if the first step $E$ does not begin on the diagonal $x= -y$.  For example,
\[
\begin{tikzpicture}[scale = 0.45, baseline = 0.45*-2.2cm]
\draw[gray] (-0.2, 0.2) grid (4.2, -4.2);
\draw[dashed, gray] (-0.2, 0.2) -- (4.2, -4.2);
\draw[very thick] (0, 0) -- (2, 0) -- (2, -1) -- (3, -1) -- (3, -3) -- (4, -3) -- (4, -4);
\end{tikzpicture} = (EESESSES)
\]
has three peaks, but only two tall peaks.  Define the \emph{Mesa path} of $\pi\in \DDD_{n}$ to be the tall Schr\"{o}der path $\mesa(\pi) \in \TTT\SSS_{n}$ obtained by first constructing $\Dyck(\pi)$ and then replacing each tall peak $ES$ with a diagonal step $D$; for example
\[
\mesa\left( \begin{tikzpicture}[scale = 0.45, baseline = 0.45*-2.2cm]
\draw[gray] (-0.2, 0.2) grid (4.2, -4.2);
\draw[dashed, gray] (-0.2, 0.2) -- (4.2, -4.2);
\draw[very thick] (0, 0) -- (2, 0) -- (2, -1) -- (3, -1) -- (3, -3) -- (4, -3) -- (4, -4);
\end{tikzpicture} \right) = 
\begin{tikzpicture}[scale = 0.45, baseline = 0.45*-2.2cm]
\draw[gray] (-0.2, 0.2) grid (4.2, -4.2);
\draw[dashed, gray] (-0.2, 0.2) -- (4.2, -4.2);
\draw[very thick] (0, 0) -- (1, 0) -- (3, -2) -- (3, -3) -- (4, -3) -- (4, -4);
\end{tikzpicture} = (EDDSES).
\]

\begin{prop}
\label{prop:scLLT}
Let $\pi$ be a Dyck path.  Then $\psi^{\mesa(\pi)} = \chi^{\graph(\pi)}$.
\end{prop}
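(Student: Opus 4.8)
The plan is to feed $\sigma = \mesa(\pi)$ into Proposition~\ref{prop:psidecomp} and show that the resulting sum of supercharacters has exactly one term. Concretely, Proposition~\ref{prop:psidecomp} expresses $\psi^{\mesa(\pi)} = \sum_{\gamma} \chi^{\gamma}$, the sum running over $\gamma \in \III\GGG_{n}$ with $\diag(\mesa(\pi)) \subseteq E(\gamma) \subseteq \area(\mesa(\pi)) \cup \diag(\mesa(\pi))$, so it suffices to establish two combinatorial facts about the Mesa map: (i) $\area(\mesa(\pi)) \cup \diag(\mesa(\pi)) = \area(\pi)$, with $\diag(\mesa(\pi)) \subseteq \area(\pi)$; and (ii) the only indifference graph on $[n]$ whose edge set lies between $\diag(\mesa(\pi))$ and $\area(\pi)$ is $\graph(\pi)$ itself. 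Granting these, every index $\gamma$ of the sum satisfies $E(\gamma) = \area(\pi) = E(\graph(\pi))$, and since $\graph(\pi)$ itself occurs (immediate from (i)), the sum collapses to $\chi^{\graph(\pi)}$.

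For (i) I would work locally along the path. Replacing a tall peak $ES$ by a diagonal step $D$ is a purely local move: the two steps $ES$ are the top and right edges of a single unit square, the \emph{corner square} of the peak, and $D$ is exactly the diagonal of that square, while the remainder of the path is untouched. Hence the corner square, which was weakly below $\pi$ and so belonged to $\area(\pi)$, is the unique square whose status changes: in $\mesa(\pi)$ it carries a diagonal step, so it leaves the area set and enters the diagonal set. This gives $\area(\mesa(\pi)) = \area(\pi) \setminus \diag(\mesa(\pi))$ together with $\diag(\mesa(\pi)) \subseteq \area(\pi)$, which is (i). The same bookkeeping supplies the description needed for (ii). Write $h = h_{\pi}$ for the function on $[n]$ sending $i$ to the $x$-coordinate of the $i$th south step of $\pi$; equivalently $h(i)$ is the largest $j \ge i$ with $\{i,j\} \in \area(\pi)$ (and $h(i) = i$ if there is none), $h$ is weakly increasing, and $E(\graph(\pi)) = \area(\pi) = \{\{i,j\} : i < j \le h(i)\}$. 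Then $\pi$ has a tall peak immediately before its $i$th south step exactly when $h(i-1) < h(i)$ and $h(i) > i$ (with the convention $h(0) = 0$), and the corresponding corner square is the edge $\{i, h(i)\}$; hence $\diag(\mesa(\pi)) = \{\{i, h(i)\} : 1 \le i \le n,\ h(i-1) < h(i),\ h(i) > i\}$.

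Claim (ii) is then a short monotonicity argument. Suppose $\gamma \in \III\GGG_{n}$ satisfies $\diag(\mesa(\pi)) \subseteq E(\gamma) \subseteq \area(\pi)$; it is enough to show $\area(\pi) \subseteq E(\gamma)$. Take $\{j,k\} \in \area(\pi)$ with $j < k \le h(j)$, and set $i = \min\{i' : h(i') = h(j)\}$. Since $h$ is weakly increasing, $i \le j$ and $h(i-1) < h(i) = h(j)$, and $h(i) = h(j) \ge k > j \ge i$ forces $h(i) > i$; thus $i$ indexes a tall peak, so $\{i, h(i)\} \in \diag(\mesa(\pi)) \subseteq E(\gamma)$. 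Because $i \le j < k \le h(i)$ and $\gamma$ is an indifference graph, the defining implication for indifference graphs forces $\{j,k\} \in E(\gamma)$. This proves (ii), and completes the reduction.

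The main obstacle is the geometric bookkeeping behind (i): pinning down the corner square of a tall peak, checking that it is the only square whose membership in the area and diagonal sets is affected by the local replacement of $ES$ by $D$, and cleanly translating "tall peak" and "corner square" into the Hessenberg-function description $\{\{i, h(i)\} : h(i-1) < h(i),\ h(i) > i\}$ used in (ii). Once that translation is in hand, claim (ii) and the collapse of the sum in Proposition~\ref{prop:psidecomp} are routine.
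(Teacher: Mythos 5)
Your proposal is correct and follows essentially the same route as the paper's proof: apply Proposition~\ref{prop:psidecomp} to $\mesa(\pi)$, use the decomposition $\area(\pi) = \area(\mesa(\pi)) \cup \diag(\mesa(\pi))$, and show that any indifference graph squeezed between $\diag(\mesa(\pi))$ and $\graph(\pi)$ must equal $\graph(\pi)$, so the sum collapses to $\chi^{\graph(\pi)}$. Your explicit bookkeeping via the Hessenberg-type function $h$ (identifying tall peaks with indices where $h(i-1)<h(i)$ and $h(i)>i$, and corner squares with $\{i,h(i)\}$) simply makes precise the step the paper states more tersely, namely that a proper subgraph must omit an edge in $\diag(\mesa(\pi))$.
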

\begin{proof}
By assumption,
\[
\area(\pi) = \area(\mesa(\pi)) \cup \diag(\mesa(\pi)),
\]
so by Proposition~\ref{prop:psidecomp}, 
\[
\psi^{\mesa(\pi)} = \sum_{\substack{ \gamma \subseteq \graph(\pi) \\[0.2em] \diag(\mesa(\pi)) \subseteq E(\gamma) }} \chi^{\gamma}.
\]
Now suppose that an indifference graph $\gamma$ is a proper spanning subgraph of $\graph(\pi)$.  Then $\gamma$ must be missing at least one edge $\{i, j\}$ such that the unit square indexed by $\{i, j\}$ is bordered directly by a tall peak of $\pi$, so that $\{i, j\} \in  \diag(\mesa(\pi))$, and $\chi^{\gamma}$ does not appear in the sum above.  Thus the only summand above is $\chi^{\graph(\pi)}$.
\end{proof}

\subsection{Factoring $\canoLLT$ through $\cf(\GL_{\bullet})$}
\label{sec:cfunchar}

Recall the discussion of unipotent characters of $\GL_{n}$ at the outset of Section~\ref{sec:mainresult2}.  The irreducible unipotent characters of $\GL_{n}$ are indexed by the partitions of $n$, with the character corresponding to $\lambda \in \PPP(n)$ written $\chi^{\lambda}$.  I will follow the convention of~\cite{Mac} in which $\chi^{(1^{n})}$ is the trivial character $\mathbbm{1}$ of $\GL_{n}$ and $\chi^{(n)}$ is the \emph{Steinberg character} $\operatorname{St}_{n}$; this differs from the convention of~\cite{Zel} and others by the transposition of each partition.

The homomorphism $\canoUC$ was constructed by Zelevinksy~\cite[\nopp 9.4]{Zel}, and is given by
\begin{equation}
\label{eq:canoUCdef}
\begin{array}{rccc}
\canoUC\colon & \cf(\GL_{\bullet}) & \to & \Sym \\[0.25em]
& \psi & \mapsto & \sum_{\lambda} \langle \psi,  \chi^{\lambda} \rangle s_{\lambda}.
\end{array}
\end{equation}
This map has right inverse given by $s_{\lambda} \shortmapsto \chi^{\lambda}$, which is also a Hopf algebra homomorphism.  Thus, $\cf(\GL_{\bullet})$ has a quotient and sub-Hopf algebra
\[
\cfunchar = \CC\spanning\{\chi^{\lambda} \;|\; \lambda \in \PPP\},
\]
through which $\canoUC$ factors, as shown in Diagram~\eqref{eq:UCdiagram}.

By Theorem~\ref{thm:universalhopfmap}, the map $\canoUC$ is equivalent to a linear character of the Hopf algebra $\cf(\GL_{\bullet})$.  This character is also given in~\cite{Zel}, and is
\[
\ket{\operatorname{St}_{\bullet}} = \big( \ket{\operatorname{St}_{n}} \big)_{n \ge 0} \in \cf(\GL_{\bullet})^{\ast}.
\]

\begin{prop}[{\cite[\nopp 9.4--5]{Zel}}]
The map $\ket{\operatorname{St}_{\bullet}}$ is a linear character of $\cf(\GL_{\bullet})$, and moreover $\canoUC$ is the unique CHA morphism $(\cf(\GL_{\bullet}), \ket{\operatorname{St}_{\bullet}}) \shortto (\Sym, \operatorname{ps}_{1})$.
\end{prop}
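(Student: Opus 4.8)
The plan is to reduce the statement to the universal property of $(\QSym, \operatorname{ps}_{1})$ from Theorem~\ref{thm:universalhopfmap}, together with the single computation that the first principal specialization sends a Schur function $s_{\lambda}$ with $\lambda \in \PPP(n)$ to $\operatorname{ps}_{1}(s_{\lambda}) = \delta_{\lambda, (n)}$; this holds because $\operatorname{ps}_{1}(s_{\lambda})$ counts the semistandard tableaux of shape $\lambda$ all of whose entries are $1$, and such a tableau exists only when $\lambda$ is a single row. Beyond this, the only representation-theoretic input needed is what is already recalled in the excerpt: that $\canoUC$ is a Hopf algebra homomorphism (Zelevinsky) and that $\operatorname{St}_{n} = \chi^{(n)}$ by convention.

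First I would establish the identity $\ket{\operatorname{St}_{\bullet}} = \operatorname{ps}_{1} \circ \canoUC$ of functionals on $\cf(\GL_{\bullet})$. For homogeneous $\psi \in \cf(\GL_{n})$, formula~\eqref{eq:canoUCdef} and the computation above give
\[
\operatorname{ps}_{1}\big(\canoUC(\psi)\big) = \sum_{\lambda \in \PPP(n)} \langle \psi, \chi^{\lambda} \rangle\, \operatorname{ps}_{1}(s_{\lambda}) = \langle \psi, \chi^{(n)} \rangle = \langle \psi, \operatorname{St}_{n} \rangle = \ket{\operatorname{St}_{n}}(\psi),
\]
and the degree-zero case is immediate since all the maps involved fix the base field. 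Because $\canoUC$ is a Hopf algebra homomorphism and $\operatorname{ps}_{1}$ is a linear character of $\Sym$, the composite $\ket{\operatorname{St}_{\bullet}} = \operatorname{ps}_{1} \circ \canoUC$ is an algebra homomorphism $\cf(\GL_{\bullet}) \to \CC$, i.e.\ a linear character of $\cf(\GL_{\bullet})$; this is the first assertion. The same identity says precisely that $\canoUC$ is a CHA morphism $(\cf(\GL_{\bullet}), \ket{\operatorname{St}_{\bullet}}) \to (\Sym, \operatorname{ps}_{1})$.

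For uniqueness, suppose $\Psi \colon (\cf(\GL_{\bullet}), \ket{\operatorname{St}_{\bullet}}) \to (\Sym, \operatorname{ps}_{1})$ is any CHA morphism. Following both $\Psi$ and $\canoUC$ by the CHA morphism $(\Sym, \operatorname{ps}_{1}) \hookrightarrow (\QSym, \operatorname{ps}_{1})$ given by inclusion yields two CHA morphisms $(\cf(\GL_{\bullet}), \ket{\operatorname{St}_{\bullet}}) \to (\QSym, \operatorname{ps}_{1})$, which must coincide by the uniqueness part of Theorem~\ref{thm:universalhopfmap}; since $\Sym \hookrightarrow \QSym$ is injective, $\Psi = \canoUC$. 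This is the same mechanism used for the analogous statements earlier in the paper.

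In this argument everything formal is routine; the one substantive ingredient is borrowed, namely that $\canoUC$—the map recording unipotent multiplicities in the Schur basis—is a Hopf algebra homomorphism, which rests on the identification of $\cfunchar(\GL_{\bullet})$ as a sub- and quotient Hopf algebra of $\cf(\GL_{\bullet})$ isomorphic to $\Sym$. If one instead wanted a self-contained proof that $\ket{\operatorname{St}_{\bullet}}$ is multiplicative, the natural route is self-adjointness of the Hopf pairing on $\cf(\GL_{\bullet})$ combined with the classical fact that parabolic restriction of the Steinberg character $\operatorname{St}_{m+n}$ to the Levi $\GL_{m} \times \GL_{n}$ equals $\operatorname{St}_{m} \otimes \operatorname{St}_{n}$; this Jacquet-module computation, rather than any of the bookkeeping above, is where the real content lies, and I expect the writeup to be short precisely because all the needed machinery is already in place.
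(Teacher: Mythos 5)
Your argument is correct, but it is worth noting that the paper offers no proof of this statement at all: it is imported wholesale from Zelevinsky (the citation \cite[\nopp 9.4--5]{Zel}), just as Proposition~\ref{prop:cfunsupp} is. What you have written is a legitimate self-contained derivation from the ingredients the paper does supply: the explicit formula~\eqref{eq:canoUCdef} for $\canoUC$, the convention $\operatorname{St}_{n} = \chi^{(n)}$, the fact (also borrowed, and borrowed by the paper too) that $\canoUC$ is a graded Hopf algebra homomorphism, and the universal property of Theorem~\ref{thm:universalhopfmap}. The pivot $\operatorname{ps}_{1}(s_{\lambda}) = \delta_{\lambda, (n)}$ is right, and the uniqueness argument via post-composition with the inclusion $(\Sym, \operatorname{ps}_{1}) \hookrightarrow (\QSym, \operatorname{ps}_{1})$ is exactly the mechanism the paper uses elsewhere (e.g.\ Corollary~\ref{cor:GPidentity}). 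One point deserves emphasis: your identity $\operatorname{ps}_{1} \circ \canoUC = \ket{\operatorname{St}_{\bullet}}$ is sensitive to the indexing convention --- under Zelevinsky's transposed convention, where $\chi^{(n)}$ is the trivial character, the same computation would produce evaluation against $\mathbbm{1}$ rather than $\operatorname{St}_{n}$ --- so it is essential, and correct, that you used the paper's stated convention. Your closing remark also locates the real content accurately: multiplicativity of $\ket{\operatorname{St}_{\bullet}}$, proved without appealing to $\canoUC$, comes down to adjointness of parabolic induction and restriction together with the fact that Harish-Chandra restriction of $\operatorname{St}_{m+n}$ to $\GL_{m} \times \GL_{n}$ is $\operatorname{St}_{m} \otimes \operatorname{St}_{n}$, which is essentially where Zelevinsky's own argument lives; your main proof instead buys the statement cheaply by leaning on the Hopf-morphism property of $\canoUC$, which is a perfectly fair trade given how the paper sets things up.
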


Now, for $n \ge 0$, write $\reg_{\UT_{n}}$ for the regular character of $\UT_{n}$.  Define a linear functional 
\[
\ket{\reg_{\bullet}} = \big( \ket{\reg_{\UT_{n}}} \big)_{n \ge 0} \in \scf(\UT_{\bullet})^{\ast}.
\]

\begin{thm}
\label{thm:canoLLT}
The function $\ket{\reg_{\bullet}}$ is linear character of $\scf(\UT_{\bullet})$; moreover, 
\[
\ket{\reg_{\bullet}} = \ket{\operatorname{St}_{\bullet}} \circ \ind^{\GL}_{\UT},
\]
so $\ind^{\GL}_{\UT}$ is a CHA morphism 
\[
\big( \scf(\UT_{\bullet}), \ket{\reg_{\bullet}} \big) \xrightarrow{\;\; \ind^{\GL}_{\UT} \;\;} \big( \cf(\GL_{\bullet}), \ket{\operatorname{St}_{\bullet}} \big).
\]
\end{thm}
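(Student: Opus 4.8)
The plan is to follow the template of the proof of Theorem~\ref{thm:canoCQS}: the first and third assertions are formal once the middle one is in hand. Granting the identity $\ket{\reg_{\bullet}} = \ket{\operatorname{St}_{\bullet}} \circ \ind^{\GL}_{\UT}$, the functional $\ket{\reg_{\bullet}}$ is the composite of the algebra homomorphism $\ket{\operatorname{St}_{\bullet}}$ (a linear character of $\cf(\GL_{\bullet})$ by the preceding proposition) with the restriction of the Hopf algebra homomorphism $\ind^{\GL}_{\UT}$ to the sub-Hopf algebra $\scf(\UT_{\bullet})$, hence is itself an algebra homomorphism, that is, a linear character of $\scf(\UT_{\bullet})$. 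The third assertion is then immediate from the definition of a CHA morphism, since $\ind^{\GL}_{\UT}$ restricted to $\scf(\UT_{\bullet})$ is already known to be a graded Hopf algebra homomorphism.

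It remains to prove $\ket{\reg_{\bullet}} = \ket{\operatorname{St}_{\bullet}} \circ \ind^{\GL}_{\UT}$. By the Frobenius reciprocity description of the dual of $\ind^{\GL}_{\UT}$ given in Section~\ref{sec:repHopf},
\[
\ket{\operatorname{St}_{\bullet}} \circ \ind^{\GL}_{\UT} = \big( \ket{\res^{\GL_{n}}_{\UT_{n}}(\operatorname{St}_{n})} \big)_{n \ge 0},
\]
so it suffices to identify, for each $n \ge 0$, the restriction $\res^{\GL_{n}}_{\UT_{n}}(\operatorname{St}_{n})$ with the regular character $\reg_{\UT_{n}}$. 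This is the crux of the proof, and it is a classical theorem of Steinberg: for a finite group of Lie type in its defining characteristic $p$, the Steinberg character restricts to the regular character on a Sylow $p$-subgroup. Here $q$ is a power of $p$, the character $\operatorname{St}_{n} = \chi^{(n)}$ is the Steinberg character of $\GL_{n}(\FF_{q})$ (of degree $q^{\binom{n}{2}}$), and $\UT_{n}$ is a Sylow $p$-subgroup of $\GL_{n}(\FF_{q})$ (also of order $q^{\binom{n}{2}}$), so the theorem applies. In particular $\res^{\GL_{n}}_{\UT_{n}}(\operatorname{St}_{n}) = \reg_{\UT_{n}} \in \scf(\UT_{n})$, whence, by the closing observation of Section~\ref{sec:repHopf}, the displayed identity is valid as an equality in $\scf(\UT_{\bullet})^{\ast}$, giving $\ket{\operatorname{St}_{\bullet}} \circ \ind^{\GL}_{\UT} = \ket{\reg_{\bullet}}$.

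If one prefers not to invoke Steinberg's theorem as a black box, the needed identity can be checked on $\scf(\UT_{n})$ directly: for any class function $\psi$ on $\UT_{n}$ one has $\ket{\reg_{\UT_{n}}}(\psi) = \langle \psi, \reg_{\UT_{n}} \rangle = \psi(1)$, so it is enough to verify $\langle \chi^{\gamma}, \res^{\GL_{n}}_{\UT_{n}}(\operatorname{St}_{n}) \rangle = \chi^{\gamma}(1)$ as $\gamma$ ranges over $\III\GGG_{n}$, equivalently that the values of $\res^{\GL_{n}}_{\UT_{n}}(\operatorname{St}_{n})$ on superclasses match those of $\reg_{\UT_{n}}$. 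Since this amounts to reproving Steinberg's restriction identity in the present special case, I would simply cite the classical result.

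The only genuine input is this single identity for the Steinberg character; everything else is bookkeeping with Frobenius reciprocity and with the fact that composites of (Hopf) algebra homomorphisms are again such. Accordingly, the main obstacle, to the extent there is one, lies not in a calculation but in recognizing that the correct linear character of $\cf(\GL_{\bullet})$ to pull back along induction is $\ket{\operatorname{St}_{\bullet}}$, and that this pullback is controlled by Steinberg's theorem; this is the analogue, on the ``character'' side, of the identification $\delta_{\UTnInd} = \res^{\GL_{n}}_{\UT_{n}}(\delta_{(n)})$ used on the ``class'' side in the proof of Theorem~\ref{thm:canoCQS}.
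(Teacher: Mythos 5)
Your proposal is correct and follows essentially the same route as the paper: reduce everything to the identity $\res^{\GL_{n}}_{\UT_{n}}(\operatorname{St}_{n}) = \reg_{\UT_{n}}$ via the Frobenius-reciprocity description of the dual map, and then establish that identity from the known behaviour of the Steinberg character on unipotent elements. The only cosmetic difference is that you cite Steinberg's Sylow-restriction theorem as a black box, whereas the paper derives the same identity from the explicit values $\operatorname{St}_{n}(X) = |\UT_{n}|$ for $X = 1_{n}$ and $0$ for other unipotent $X$ (citing Zelevinsky); these are the same fact.
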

\begin{proof}
It is sufficient to prove that $\ket{\reg_{\bullet}} = \ket{\operatorname{St}_{\bullet}} \circ \ind^{\GL}_{\UT}$.  Doing so requires the well-known fact (see e.g.~\cite[\nopp 10.3]{Zel}) that for unipotent $X \in \GL_{n}$, 
\[
\operatorname{St}_{n}(X) = \begin{cases} |\UT_{n}| & \text{if $X = 1_{n}$} \\ 0 & \text{for other unipotent $X$.} \end{cases}
\]
As a consequence,
\[
\res^{\GL_{n}}_{\UT_{n}}(\operatorname{St}_{n}) = \reg_{\UT_{n}}.
\]
With this, the claim follows from Frobenius reciprocity as described in Section~\ref{sec:repHopf}:
\[
\ket{\operatorname{St}_{\bullet}} \circ \ind^{\GL}_{\UT} = \big( \ket{ \res^{\GL_{n}}_{\UT_{n}}(\operatorname{St}_{n})} \big)_{n \ge 0} = \ket{\reg_{\bullet}}. \qedhere
\]
\end{proof}

\begin{rem}
Like Theorem~\ref{thm:canoCQS}, Theorem~\ref{thm:canoLLT} actually shows that $\ind^{\GL}_{\UT}$ is a CHA morphism from the larger combiantorial Hopf algebra $( \cf(\UT_{\bullet}), \ket{\reg_{\bullet}} )$ to $(\QSym, \operatorname{ps}_{1})$.
\end{rem}

\subsection{The vertical strip LLT polynomial}
\label{sec:LLT}

The \emph{vertical strip LLT polynomial} indexed by a tall Schr\"{o}der path $\sigma$ is
\[
G_{\sigma}(\mathbf{x}; t)  = 
 \sum_{\kappa \in A(\sigma)} t^{\asc_{\left([n], \area(\sigma)\right)}(\kappa)} x_{\kappa(1)} x_{\kappa(2)} \dots x_{\kappa(n)} \in \CC[[\mathbf{x}]][t],
\]
where the sum is over the set $A(\sigma)$ of functions $\kappa: [n] \shortto \ZZ_{> 0}$ which satisfy $\kappa(i) < \kappa(j)$ for each $i < j$ with $\{i, j\} \in \diag(\sigma)$.  
Viewed as a polynomial in $t$, the coefficients of $G_{\sigma}(\mathbf{x}; t)$ are actually symmetric functions~\cite[Lemma 10.2]{HHL}, though this is not obvious.  For example, 
\[
G_{\begin{tikzpicture}[scale = 0.15, baseline = 0.15*-1.7cm]
\draw[gray] (-0.2, 0.2) grid (3.2, -3.2);
\draw[dashed, gray] (-0.2, 0.2) -- (3.2, -3.2);
\draw[very thick] (0, 0) -- (2, 0)  -- (3, -1) -- (3, -3);
\end{tikzpicture}}(\mathbf{x}; t) =  t\;m_{(2, 1)} + (t^{2} + 2t)\;m_{(1^{3})}.
\]

\begin{rem}
There are several essentially equivalent definitions of LLT polynomials; the one above is due to~\cite{CM} in the unicellular case and to~\cite{Dadd} (see also~\cite{AP}) in general.
\end{rem}

If $\sigma$ is a Dyck path, so that $\diag(\sigma) = \emptyset$, then the sum in $G_{\sigma}(\mathbf{x}; t)$ is over all possible colorings; this special case is know as a \emph{unicellular} LLT polynomial.  In~\cite{GP16}, Guay-Paquet realizes the unicellular LLT polynomials by way of a homomorphism of Hopf algebras over $\CC[t]$.  By evaluating at $t = q^{-1}$ as in Theorem~\ref{thm:scfNG}, this result descends to a Hopf algebra homomorphism $\scf(\UT_{\bullet}) \shortto \QSym$.  Define a linear functional
\[
\begin{array}{rcl}
\zeta_{1}: \scf(\UT_{\bullet}) & \to & \CC \\[0.5em]
\permind_{\gamma} & \mapsto & 1.
\end{array}
\]

\begin{thm}[{\cite[Theorem~57]{GP16}}]
\label{thm:GPcano2}
The map $\zeta_{1}$ is a linear character of $\scf(\UT_{\bullet})$, and the unique CHA morphism
\[
\big( \scf(\UT_{\bullet}), \zeta_{1} \big) \to (\QSym, \operatorname{ps}_{1})
\]
is given by
\[
\permind_{\graph(\pi)} \mapsto G_{\pi}(\mathbf{x}; q^{-1})
\qquad\text{for $\pi \in \DDD$}.
\]
\end{thm}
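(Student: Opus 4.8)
The plan is to deduce the statement from its source \cite[Theorem~57]{GP16} by transporting that result along the isomorphism of Theorem~\ref{thm:scfNG} and then pinning down the linear character with a short first-principal-specialization computation. In \cite{GP16}, Guay-Paquet constructs over $\CC[t]$ a Hopf algebra homomorphism from his Hopf algebra on indifference graphs into $\QSym$ (base-changed to $\CC[t]$) whose value on the graph $\graph(\pi)$ of a Dyck path $\pi$ is the unicellular LLT polynomial $G_{\pi}(\mathbf{x}; t)$. Evaluating $t \shortmapsto q^{-1}$ and composing with the isomorphism $\gamma \shortmapsto \permind_{\gamma}$ of Theorem~\ref{thm:scfNG} produces a graded Hopf algebra homomorphism $\Phi \colon \scf(\UT_{\bullet}) \shortto \QSym$ with $\Phi(\permind_{\graph(\pi)}) = G_{\pi}(\mathbf{x}; q^{-1})$ for all $\pi \in \DDD$; since $\{\permind_{\graph(\pi)} \mid \pi \in \DDD\}$ is a homogeneous basis of $\scf(\UT_{\bullet})$, this determines $\Phi$ completely. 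It then suffices to show that $\operatorname{ps}_{1} \circ \Phi = \zeta_{1}$: granted this, $\zeta_{1}$ is exhibited as a composite of algebra homomorphisms and hence is a linear character, and Theorem~\ref{thm:universalhopfmap} identifies $\Phi$ as the unique CHA morphism $(\scf(\UT_{\bullet}), \zeta_{1}) \shortto (\QSym, \operatorname{ps}_{1})$, which is the desired conclusion.

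To check $\operatorname{ps}_{1} \circ \Phi = \zeta_{1}$, I would compare the two functionals on the basis $\{\permind_{\graph(\pi)}\}$. Fix $\pi \in \DDD_{n}$ with $n \ge 1$. Because $\operatorname{ps}_{1}$ reads off the coefficient of $M_{(n)}$ from a homogeneous quasisymmetric function of degree $n$ --- equivalently the coefficient of the monomial $x_{1}^{n}$ --- I need the coefficient of $x_{1}^{n}$ in $G_{\pi}(\mathbf{x}; q^{-1})$. By the definition of the vertical strip LLT polynomial in Section~\ref{sec:LLT}, this is $\sum_{\kappa} (q^{-1})^{\asc_{([n], \area(\pi))}(\kappa)}$ summed over colorings $\kappa \colon [n] \shortto \ZZ_{>0}$ with $\kappa(1) = \cdots = \kappa(n) = 1$. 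There is exactly one such coloring; it lies in $A(\pi)$ since $\diag(\pi) = \emptyset$ imposes no condition, and its ascent number is $0$ because no edge can witness a strict inequality between equal values. Hence the coefficient equals $1 = \zeta_{1}(\permind_{\graph(\pi)})$, and the degree-$0$ case is immediate. So $\operatorname{ps}_{1} \circ \Phi$ and $\zeta_{1}$ agree on a basis and therefore coincide.

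The one genuinely delicate step is the transport in the first paragraph: one must verify that the substitution $t \shortmapsto q^{-1}$ identifies Guay-Paquet's Hopf algebra on indifference graphs with $\scf(\UT_{\bullet})$ exactly as in Theorem~\ref{thm:scfNG} (this is \cite[Corollary~7.2]{Gaga}), that the basis element he assigns to a Dyck path $\pi$ matches $\permind_{\graph(\pi)}$, and that his unicellular LLT polynomial coincides with the $G_{\pi}(\mathbf{x}; t)$ of Section~\ref{sec:LLT} with no extraneous $\omega$ and no interchange of $t$ with $t^{-1}$. These are bookkeeping points rather than conceptual ones, but they fix the precise normalization of the final formula, so they require care. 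I note in passing that, once Theorem~\ref{thm:GPcano1} is available, the formula could instead be extracted from the plethystic identity of Carlsson--Mellit \cite[Proposition~3.5]{CM} recalled in Section~\ref{sec:mainresult2}; but interpreting the substitution $\mathbf{x} \shortmapsto \mathbf{x}/(t-1)$ within the combinatorial Hopf algebra framework is no simpler than the direct argument above.
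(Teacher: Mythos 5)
Your proposal is correct and matches the paper's treatment: the paper likewise obtains this statement by transporting \cite[Theorem~57]{GP16} along the isomorphism of Theorem~\ref{thm:scfNG} after the specialization $t \shortmapsto q^{-1}$, with no further argument given. Your additional check that $\operatorname{ps}_{1} \circ \Phi = \zeta_{1}$ via the coefficient of $x_{1}^{n}$ in $G_{\pi}(\mathbf{x}; q^{-1})$ is a sound (and welcome) way of making the identification of the linear character explicit rather than citing it.
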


Now recall the character $ \ket{\reg_{\bullet}}$ defined in the previous section.

\begin{prop}
As a linear character of the Hopf algebra $\scf(\UT_{\bullet})$, $\ket{\reg_{\bullet}}$ is equal to $\zeta_{1}$; in particular
\[
\ket{\reg_{\bullet}}(\permind_{\gamma}) = 1 \qquad \text{for $\gamma \in \III\GGG$}.
\]
\end{prop}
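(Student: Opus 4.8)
The plan is to evaluate the linear functional $\ket{\reg_{\bullet}}$ directly on the basis $\{\permind_{\gamma} \mid \gamma \in \III\GGG\}$ of $\scf(\UT_{\bullet})$ and compare with $\zeta_{1}$. The starting point is the standard fact that for any finite group $G$ and any $\psi \in \cf(G)$ one has $\langle \psi, \reg_{G} \rangle = \psi(1)$: this follows from the fact that $\reg_{G}(g)$ equals $|G|$ if $g = 1$ and $0$ otherwise, together with the definition of the inner product. Specializing to $G = \UT_{n}$ gives $\ket{\reg_{\UT_{n}}}(\psi) = \langle \psi, \reg_{\UT_{n}} \rangle = \psi(1)$ for all $\psi \in \scf(\UT_{n})$, so that $\ket{\reg_{\bullet}}(\permind_{\gamma}) = \permind_{\gamma}(1)$.

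It then remains to compute $\permind_{\gamma}(1)$. Here I would invoke Equation~\eqref{eq:permtoind}, which gives $\permchar^{\gamma} = \ind^{\UT_{n}}_{\UT_{\gamma}}(\mathbbm{1}) = q^{|E(\gamma)|}\permind_{\gamma}$, the character of the permutation module $\CC[\UT_{n}/\UT_{\gamma}]$. Evaluating this character at the identity gives $|\UT_{n} : \UT_{\gamma}| = q^{|E(\gamma)|}$, using the index formula recorded in Section~\ref{sec:sct}. Dividing by $q^{|E(\gamma)|}$ yields $\permind_{\gamma}(1) = 1$, and hence $\ket{\reg_{\bullet}}(\permind_{\gamma}) = 1$ for every $\gamma \in \III\GGG$.

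Finally, since $\{\permind_{\gamma} \mid \gamma \in \III\GGG\}$ is a basis of $\scf(\UT_{\bullet})$ and $\zeta_{1}$ is by definition the linear functional sending each $\permind_{\gamma}$ to $1$, the two functionals agree on a basis and therefore coincide. Combined with Theorem~\ref{thm:canoLLT}, which already establishes that $\ket{\reg_{\bullet}}$ is a linear character, this gives the equality $\ket{\reg_{\bullet}} = \zeta_{1}$ of linear characters of $\scf(\UT_{\bullet})$. The argument is entirely routine; the only point meriting a moment's care is that $\permind_{\gamma}$ is a positive rational multiple of a genuine character, so that ``evaluation at the identity'' is well-defined and equals the pairing with the regular character.
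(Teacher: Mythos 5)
Your proposal is correct and follows essentially the same route as the paper: both reduce $\ket{\reg_{\bullet}}(\permind_{\gamma})$ to the value $\permind_{\gamma}(1_{n})$ via the standard pairing with the regular character and then observe this value is $1$. The only cosmetic difference is that you compute $\permind_{\gamma}(1_{n})$ from the degree $q^{|E(\gamma)|}$ of the permutation character in Equation~\eqref{eq:permtoind}, whereas the paper treats it as an immediate computation (one edge case: from the definition $\permind_{\gamma} = \sum_{\sigma \supseteq \gamma}\delta_{\sigma}$, only the complete graph's superclass contains $1_{n}$); both are fine.
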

\begin{proof}
This follows from direct computation: if $\gamma \in \III\GGG_{n}$, 
\[
\ket{\reg_{\bullet}}(\permind_{\gamma}) = \langle \permind_{\gamma}, \reg_{\UT_{n}} \rangle = \permind_{\gamma}(1_{n}) = 1. \qedhere
\]
\end{proof}

The uniqueness result of Theorem~\ref{thm:universalhopfmap} now gives the following, which restates Equation~\ref{eq:GPLLTidentity}.

\begin{cor}
\label{cor:GPidentityLLT}
The map $\canoLLT$ is the CHA morphism described in Theorem~\ref{thm:GPcano2}.  In particular,
\[
\canoLLT(\permind_{\graph(\pi)}) = G_{\pi}(\mathbf{x}; q^{-1}) \qquad\text{for $\pi \in \DDD$}.
\]
\end{cor}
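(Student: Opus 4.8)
The plan is to obtain Corollary~\ref{cor:GPidentityLLT} from the uniqueness clause of Theorem~\ref{thm:universalhopfmap}, exactly as Corollary~\ref{cor:GPidentity} was obtained in Section~\ref{sec:CQS}. The goal is to check that $\canoLLT$ is a CHA morphism $(\scf(\UT_{\bullet}), \zeta_{1}) \to (\QSym, \operatorname{ps}_{1})$; since Theorem~\ref{thm:GPcano2} produces a CHA morphism with the same source and target, uniqueness forces the two to coincide, and the displayed identity $\canoLLT(\permind_{\graph(\pi)}) = G_{\pi}(\mathbf{x}; q^{-1})$ is then just the formula recorded in Theorem~\ref{thm:GPcano2}.

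First I would note that $\canoLLT$ is a graded Hopf algebra homomorphism, being by definition (Diagram~\eqref{eq:LLTdiagram}) the composite of $\ind^{\GL}_{\UT}$, the inclusion $\cfunsupp(\GL_{\bullet}) \hookrightarrow \cf(\GL_{\bullet})$, the map $\canoUC$, and the inclusion $\Sym \hookrightarrow \QSym$, all of which are graded Hopf algebra homomorphisms. Next I would compute the linear character $\operatorname{ps}_{1} \circ \canoLLT$ of $\scf(\UT_{\bullet})$ by chasing Diagram~\eqref{eq:LLTdiagram}: since $(\Sym, \operatorname{ps}_{1}) \hookrightarrow (\QSym, \operatorname{ps}_{1})$ is a CHA morphism, we have $\operatorname{ps}_{1} \circ \canoLLT = \operatorname{ps}_{1} \circ \canoUC \circ \ind^{\GL}_{\UT}$; the proposition of Section~\ref{sec:cfunchar} identifying $\canoUC$ with the linear character $\ket{\operatorname{St}_{\bullet}}$ gives $\operatorname{ps}_{1} \circ \canoUC = \ket{\operatorname{St}_{\bullet}}$; Theorem~\ref{thm:canoLLT} gives $\ket{\operatorname{St}_{\bullet}} \circ \ind^{\GL}_{\UT} = \ket{\reg_{\bullet}}$; and the proposition immediately preceding this corollary gives $\ket{\reg_{\bullet}} = \zeta_{1}$. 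Chaining these equalities yields $\operatorname{ps}_{1} \circ \canoLLT = \zeta_{1}$, so $\canoLLT$ is the required CHA morphism and Theorem~\ref{thm:universalhopfmap} finishes the argument.

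There is no serious obstacle here: this is a formal diagram chase in which every ingredient has already been established earlier in the paper, and it is the LLT analogue of the proof of Corollary~\ref{cor:GPidentity}. The one point that deserves mild care is bookkeeping of which copy of $\operatorname{ps}_{1}$ --- the one on $\Sym$ or the one on $\QSym$ --- is in play at each stage, which is resolved by the observation that the inclusion $\Sym \hookrightarrow \QSym$ is itself a CHA morphism $(\Sym, \operatorname{ps}_{1}) \to (\QSym, \operatorname{ps}_{1})$. One could also remark that, because $\canoLLT$ has image in $\Sym$, the identity $\canoLLT(\permind_{\graph(\pi)}) = G_{\pi}(\mathbf{x}; q^{-1})$ yields an independent proof that the $t$-coefficients of the unicellular LLT polynomials are symmetric functions, paralleling Remark~\ref{rem:CQSsymmetry}(R1), though this is not needed for the statement.
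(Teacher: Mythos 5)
Your proposal is correct and follows essentially the same route as the paper: the paper derives Corollary~\ref{cor:GPidentityLLT} directly from the uniqueness clause of Theorem~\ref{thm:universalhopfmap}, using Theorem~\ref{thm:canoLLT} and the identification $\ket{\reg_{\bullet}} = \zeta_{1}$ to see that $\canoLLT$ is a CHA morphism $(\scf(\UT_{\bullet}), \zeta_{1}) \to (\QSym, \operatorname{ps}_{1})$ and hence coincides with the morphism of Theorem~\ref{thm:GPcano2}. Your diagram chase simply makes explicit the same chain of equalities the paper leaves implicit, so there is nothing to correct.
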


\begin{rem}[c.f.~Remark~\ref{rem:CQSsymmetry}~(R1)]
Corollary~\ref{cor:GPidentityLLT} can be used to give a novel proof that the unicellular LLT polynomial $G_{\pi}(\mathbf{x}; t)$ has symmetric coefficients.
\end{rem}

The proof of Theorem~\ref{thm:LLTbijection} is given below following two identities for LLT polynomials.

\begin{prop}[{\cite[Theorem~2.1]{AS}} and {\cite[Proposition 3.4]{CM}}]
\label{prop:LLTsieve}
\label{prop:LLTtransform}
Let $n$ be a positive integer.
\begin{enumerate}[label = (\roman*)]
\item For any Dyck paths $\pi \in \DDD_{n}$, 
\[
q^{|\area(\pi)|} G_{\Dyck(\pi)}(\mathbf{x}; q^{-1}) = \omega G_{\Dyck(\pi)}(\mathbf{x}; q).
\]

\item For any tall Schr\"{o}der paths $\sigma \in \TTT\SSS_{n}$, 
\[
(q-1)^{|\diag(\sigma)|} G_{\sigma}(\mathbf{x}; q) = \sum_{S \subseteq \diag(\sigma)} (-1)^{|\diag(\sigma) \setminus S|} G_{
\area^{-1}\big(  \area(\sigma) \cup S \big)}(\mathbf{x}; q),
\]
where $\area^{-1}\big(  \area(\sigma) \cup S \big)$ denotes the unique Dyck path with area $\area(\sigma) \cup S$.

\end{enumerate}
\end{prop}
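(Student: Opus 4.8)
I would treat the two parts separately: part (ii) by an elementary inclusion--exclusion (this is essentially \cite[Proposition 3.4]{CM}), and part (i) by quoting the known $\omega$-duality for unicellular LLT polynomials from \cite[Theorem 2.1]{AS}.

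For part (ii), fix a coloring $\kappa\colon[n]\to\ZZ_{>0}$ and, for a cell $c=\{i,j\}$ with $i<j$, set $\epsilon_{c}(\kappa)=1$ if $\kappa(i)<\kappa(j)$ and $\epsilon_{c}(\kappa)=0$ otherwise. Since $\area(\sigma)$ and $\diag(\sigma)$ are disjoint, for every $S\subseteq\diag(\sigma)$ the Dyck path $\pi_{S}=\area^{-1}(\area(\sigma)\cup S)$ (which exists by hypothesis) satisfies $\asc_{([n],\,\area(\sigma)\cup S)}(\kappa)=\asc_{([n],\,\area(\sigma))}(\kappa)+\sum_{c\in S}\epsilon_{c}(\kappa)$, so, the sum running over all colorings,
\[
G_{\pi_{S}}(\mathbf{x};q)=\sum_{\kappa}q^{\asc_{([n],\,\area(\sigma))}(\kappa)}\Bigl(\prod_{c\in S}q^{\epsilon_{c}(\kappa)}\Bigr)x_{\kappa(1)}\cdots x_{\kappa(n)}.
\]
Summing over $S\subseteq\diag(\sigma)$ with signs $(-1)^{|\diag(\sigma)\setminus S|}$ and interchanging the two sums, the inner sum factors as $\prod_{c\in\diag(\sigma)}\bigl(q^{\epsilon_{c}(\kappa)}-1\bigr)$ by the usual distribution of a product over subsets. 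Each factor equals $q-1$ when $\kappa$ ascends $c$ and $0$ otherwise, so the product vanishes unless $\kappa$ ascends every cell of $\diag(\sigma)$---that is, unless $\kappa\in A(\sigma)$---and equals $(q-1)^{|\diag(\sigma)|}$ for such $\kappa$. What remains is exactly $(q-1)^{|\diag(\sigma)|}G_{\sigma}(\mathbf{x};q)$, which is the identity in (ii); note this argument is purely formal in $q$.

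For part (i), I would invoke the $\omega$-duality $\omega G_{\Dyck(\pi)}(\mathbf{x};t)=t^{|\area(\pi)|}G_{\Dyck(\pi)}(\mathbf{x};t^{-1})$ for unicellular LLT polynomials (\cite[Theorem 2.1]{AS}); evaluating at $t=q$ gives the asserted equality, and the only work is to match the indexing of unicellular LLT polynomials used there (by Dyck paths, or by area sequences) with the $\graph$ and $\area$ conventions of Section~\ref{sec:Catalan}. A self-contained argument would instead pass to the expansion of $G_{\Dyck(\pi)}$ in Gessel's fundamental quasisymmetric functions---obtained by standardizing each coloring to a permutation and recording its descent composition---use that $\omega$ replaces each fundamental by the one indexed by the complementary composition, and match this complementation on colorings with the value-reversing involution $\kappa\mapsto(N{+}1-\kappa)$ on colorings valued in $\{1,\dots,N\}$, which swaps strict ascents and strict descents across cells.

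\textbf{Main obstacle.} The genuinely nontrivial ingredient is part (i): the interaction of $\omega$ with the LLT ascent statistic is not a plain sign reversal, because cells on which a coloring is constant are fixed by the value-reversing involution, and reconciling this is precisely what produces the prefactor $t^{|\area(\pi)|}$ and forces one through the full standardization/quasisymmetric-expansion bookkeeping. Since that computation is already carried out in \cite{AS}, the plan is to quote \cite[Theorem 2.1]{AS} for (i) after the routine translation of conventions, while giving the short inclusion--exclusion above for (ii).
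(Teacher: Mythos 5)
Your proposal is correct, and it is worth noting that the paper itself does not prove this proposition at all: it is imported verbatim from the literature, with (ii) being Carlsson--Mellit's identity and (i) the standard $\omega$-duality for unicellular LLT polynomials recorded by Alexandersson--Sulzgruber. So your treatment of (i) --- quoting the cited result after matching conventions --- is exactly what the paper does, and your flagged ``obstacle'' (the constant-coloring cells producing the prefactor $t^{|\area(\pi)|}$) is indeed the nontrivial content that both you and the paper leave to the citation. Where you go beyond the paper is part (ii): your inclusion--exclusion argument is a complete and correct proof. Fixing $\kappa$, using disjointness of $\area(\sigma)$ and $\diag(\sigma)$ to split the ascent statistic, and collapsing $\sum_{S\subseteq\diag(\sigma)}(-1)^{|\diag(\sigma)\setminus S|}\prod_{c\in S}q^{\epsilon_{c}(\kappa)}$ into $\prod_{c\in\diag(\sigma)}\bigl(q^{\epsilon_{c}(\kappa)}-1\bigr)$ precisely selects the colorings in $A(\sigma)$ with weight $(q-1)^{|\diag(\sigma)|}$, which is the claimed identity, valid as a polynomial identity in $t$ and hence at $t=q$. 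The only hypothesis you should make explicit (the paper does so in Section~\ref{sec:psisigma}) is that $\area(\sigma)\cup S$ is the area set of a genuine Dyck path for every $S\subseteq\diag(\sigma)$, which follows from the definition of $\diag$ for a tall Schr\"{o}der path; with that remark your argument for (ii) is self-contained, which is a modest improvement over the paper's pure citation.
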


\begin{proof}[Proof of Theorem~\ref{thm:LLTbijection}]
For $\pi \in \DDD$, Equation~\eqref{eq:permtoind} states that $\permchar^{\graph(\pi)} = q^{|\area(\pi)|} \permind_{\graph(\pi)}$, so by Proposition~\ref{prop:LLTtransform} (i),
\[
\canoLLT(\permchar^{\graph(\pi)}) = \omega G_{\pi}(\mathbf{x}; q).
\]
Combining this with Proposition~\ref{prop:LLTtransform} (ii) and the linearity of $\omega$,
\[
\canoLLT(\psi^{\sigma}) =  \sum_{S \subseteq \diag(\sigma)} (-1)^{|\diag(\sigma) \setminus S|} \omega G_{\pi + S}(\mathbf{x}; q) = (q-1)^{|\diag(\sigma)|} \omega G_{\sigma}(\mathbf{x}; q).\qedhere
\]
\end{proof}

\section{Positivity conjectures}
\label{sec:positivity}

Recall the bases of $\Sym$ given in Section~\ref{sec:QSym}.  An element $f(\mathbf{x}; t) \in \Sym[t]$ is said to be \emph{$e$-positive} if the coefficients $a_{\lambda}(t)$ in
\[
f(\mathbf{x}; t) = \sum_{\lambda \in \PPP} a_{\lambda}(t) e_{\lambda}
\]
are polynomials in $t$ with nonnegative coefficients: $a_{\lambda}(t) \in \ZZ_{\ge 0}[t]$.  Likewise, if the coefficients of $f(\mathbf{x}; t)$ in any other basis of $\Sym$ have this property---for example, the Schur basis $\{s_{\lambda} \;|\; \lambda \in \PPP\}$---say that $f(\mathbf{x}; t)$ is positive in that basis.   The positivity of the symmetric functions in this paper are of some interest.

For chromatic quasisymmetric functions in Section~\ref{sec:CQS}, $e$-positivity generalizes the Stanley--Stembridge conjecture~\cite[Conjecture 5.5]{StaStem}, which by~\cite{GP13} is the $t = 1$ case below.

\begin{conj}[{\cite[Conjecture 1.3]{ShWa}}]
\label{conj:epos}
For each $\gamma \in \III\GGG$, $X_{\gamma}(\mathbf{x}; t)$ is $e$-positive.
\end{conj}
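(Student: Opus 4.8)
The plan is to run Theorem~\ref{thm:CQSbijection} in reverse. Since $\ind^{\GL_{n}}_{\UT_{\gamma}}(\mathbbm{1}) = (q-1)^{n}\canoUS^{-1}\big(X_{\gamma}(\mathbf{x};q)\big)$, the symmetric function $X_{\gamma}(\mathbf{x};t)$ is $e$-positive \emph{at} $t=q$ exactly when this $\GL_{n}$-character is a nonnegative combination of the class functions $\canoUS^{-1}(e_{\lambda})$, and my first task is to name these explicitly. Feeding the Hall--Littlewood facts $P_{(1^{k})}(\mathbf{x};t)=e_{k}$ and $n\big((1^{k})\big)=\binom{k}{2}$ into~\eqref{eq:canoUCmap} gives $\canoUS\big(\delta_{(1^{k})}\big)=q^{-\binom{k}{2}}e_{k}$, so $\canoUS^{-1}(e_{k})$ is a scalar multiple of $\delta_{(1^{k})}$, the identifier of the one-element unipotent class $\{1_{k}\}$, i.e.\ of $\reg_{\GL_{k}}$. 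As the product on $\cfunsupp(\GL_{\bullet})$ is parabolic induction, $\canoUS^{-1}(e_{\lambda})$ is therefore a scalar multiple of the permutation character $\CC[\GL_{n}/U_{\lambda}]$, where $U_{\lambda}$ is the unipotent radical of the parabolic of type $\lambda$; chasing the constants with $|\GL_{k}|=q^{\binom{k}{2}}\prod_{j=1}^{k}(q^{j}-1)$ turns Conjecture~\ref{conj:epos} evaluated at $t=q$ into the assertion that
\[
\ind^{\GL_{n}}_{\UT_{\gamma}}(\mathbbm{1})=\CC[\GL_{n}/\UT_{\gamma}]=\sum_{\lambda\in\PPP(n)}\frac{a_{\lambda}(q)}{\prod_{i}[\lambda_{i}]_{q}!}\,\CC[\GL_{n}/U_{\lambda}]
\qquad\text{where}\qquad [k]_{q}!:=\prod_{j=1}^{k}(1+q+\cdots+q^{j-1}),
\]
has all coefficients $a_{\lambda}(q)\ge 0$ for every prime power $q$, $X_{\gamma}(\mathbf{x};t)=\sum_{\lambda}a_{\lambda}(t)e_{\lambda}$ being the $e$-expansion. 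I would then need two things: prove this $\GL_{n}$ statement for each $q$, and upgrade ``$a_{\lambda}(q)\ge 0$ for all prime powers $q$'' to ``$a_{\lambda}(t)\in\ZZ_{\ge 0}[t]$''.

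The concrete handle on the right-hand side is geometric. By Proposition~\ref{prop:HessenbergPoints} the value $\ind^{\GL_{n}}_{\UT_{\gamma}}(\mathbbm{1})(1+A)$ is, up to $(q-1)^{n}q^{|E(\gamma)|}$, the number of $\FF_{q}$-points of the nilpotent Hessenberg variety $\BBB^{\mathfrak{ut}_{\gamma}(\FF_{q})}_{A}$, and the point counts of the $\CC[\GL_{n}/U_{\lambda}]$ are likewise Hessenberg counts for parabolic-type subspaces; so I would attack the displayed identity by expanding $|\BBB^{\mathfrak{ut}_{\gamma}(\FF_{q})}_{A}|$ through an affine paving uniform in $q$ --- of the kind Ji and Precup construct over $\CC$ --- and matching coefficients. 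If the paving is genuinely uniform, each $a_{\lambda}(t)$ emerges as a manifestly nonnegative polynomial and the polynomiality bootstrap is automatic; in the language of~\eqref{eq:dlambdadef} this is exactly a combinatorial formula for $d_{\lambda}^{\gamma}(t)$, and the residual difficulty is to transport positivity across the change of basis from the Hall--Littlewood basis $\{\widetilde{P_{\lambda}}\}$ to $\{e_{\lambda}\}$, which is not sign-free.

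The main obstacle is that this is precisely the modified Stanley--Stembridge conjecture, open for decades even at $t=1$, which is the Stanley--Stembridge conjecture for indifference graphs --- and $t=1$ is not among the values $t=q$ that $\GL_{n}(\FF_{q})$ sees, so the polynomiality step is a real hurdle rather than a formality. Moreover the reformulation is \emph{not} an honest submodule decomposition: the coefficients $a_{\lambda}(q)/\prod_{i}[\lambda_{i}]_{q}!$ are in general true rational functions of $q$ (already for $\gamma$ the path on three vertices and $\lambda=(2,1)$ one gets $q/(1+q)$), so there is no $\GL_{n}$-equivariant filtration of $\CC[\GL_{n}/\UT_{\gamma}]$ to build and any argument must live at the level of virtual characters, i.e.\ of point counts and the Hall--Littlewood-to-elementary change of basis. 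The $\GL_{n}(\FF_{q})$ picture thus recasts the conjecture cleanly but offers no new grip on the sign cancellations that make it hard; a realistic intermediate goal is to push the program through for graphs already known to be $e$-positive --- complete graphs, where $\UT_{\gamma}$ is trivial and $X_{\gamma}=[n]_{t}!\,e_{(n)}$; paths; and the families covered by existing partial results --- as a check on the framework and a testing ground for uniform arguments.
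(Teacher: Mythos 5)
This statement is the modified Stanley--Stembridge conjecture, which is open; the paper itself offers no proof of Conjecture~\ref{conj:epos} and only establishes (in Section~\ref{sec:GLepos}) an \emph{equivalence} with a $\GL_{n}$-reformulation, Conjecture~\ref{conj:GLepos}. Your proposal is likewise not a proof, as you acknowledge, so the verdict has to be: genuine gap --- the entire positivity argument is missing. What you do get right is the reformulation step, and it is essentially the paper's own in different clothing: since $\operatorname{St}_{k}$ agrees with $q^{\binom{k}{2}}\delta_{(1^{k})}$ on unipotent elements and $\psi\shortmapsto\psi|_{\text{uni}}$ is multiplicative, your permutation characters $\CC[\GL_{n}/U_{\lambda}]$ coincide on unipotent elements with $\prod_{i}(q-1)^{\lambda_{i}}[\lambda_{i}]_{q}!\cdot\operatorname{St}_{\lambda}$, so your displayed identity is the paper's Conjecture~\ref{conj:GLepos} up to normalization. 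Note that with the paper's normalization the conjectural coefficients are the $a^{\gamma}_{\lambda}(q)$ themselves, not rational functions, so your remark that the reformulation cannot be an honest (virtual-)character statement is an artifact of your choice of scalars rather than an intrinsic obstruction.

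The concrete gaps in the proposed program are these. First, the Hessenberg/affine-paving route through Proposition~\ref{prop:HessenbergPoints} can only produce positivity of the coefficients $d^{\gamma}_{\lambda}(t)$ in the modified Hall--Littlewood basis --- which is already known (Theorem~\ref{thm:precupsommers}, Corollary~\ref{cor:adnilpotentpoincare}, Ji--Precup) --- and, as you yourself note, the change of basis from $\{\widetilde{P_{\lambda}}\}$ to $\{e_{\lambda}\}$ is not sign-preserving; that transition is precisely where the conjecture's difficulty lives, and nothing in the proposal addresses it. Second, the ``bootstrap'' from $a_{\lambda}(q)\ge 0$ for all prime powers $q$ to $a_{\lambda}(t)\in\ZZ_{\ge 0}[t]$ is invalid as stated: positivity of values at infinitely many integers does not force nonnegative coefficients (the paper makes exactly this point in Section~\ref{sec:GLspos}), and in particular it would not even recover the $t=1$ Stanley--Stembridge case, since $t=1$ is not of the form $t=q$. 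So the proposal is a correct and honest restatement plus a research program consistent with the paper's Section~\ref{sec:positivity}, but it does not prove, and could not as written prove, the conjecture.
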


Special cases of Conjecture~\ref{conj:epos} have have explicit solutions, as in~\cite{AbreuNigro, CMP, HaradaPrecup, HuhNamYoo}.

For vertical strip LLT polynomials in Section~\ref{sec:LLT}, Schur positivity has implications for the study of Macdonald polynomials~\cite{HHL}.  The next two statements are adapted for vertical strip LLT polynomials from the setting of general LLT polynomials.

\begin{thm}[{\cite[Corollary 6.9]{GrojHai}}]
\label{thm:LLTschurpos}
For each $\sigma \in \TTT\SSS$, $G_{\sigma}(\mathbf{x}; t)$ is positive in the Schur basis.
\end{thm}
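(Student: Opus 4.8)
The plan is to recognize $G_{\sigma}(\mathbf{x};t)$ as a special case of the LLT polynomials of Lascoux--Leclerc--Thibon attached to tuples of skew diagrams, and then to appeal to the canonical-basis machinery that establishes Schur positivity in that generality. Concretely, I would first build the combinatorial dictionary: a tall Schr\"{o}der path $\sigma \in \TTT\SSS_{n}$ encodes a tuple $\boldsymbol{\nu}(\sigma)$ of vertical-strip (single-column) skew shapes, with the $D$-steps of $\sigma$ governing the relative offsets (``contents'') of the components and $\area(\sigma)$ governing which pairs of cells attack one another. The defining sum for $G_{\sigma}(\mathbf{x};t)$ over the colorings $\kappa \in A(\sigma)$, weighted by $t^{\asc_{([n],\,\area(\sigma))}(\kappa)}$, then has to be matched --- up to an overall power of $t$ and possibly the involution $\omega$ --- with the spin- (equivalently, inversion-) weighted generating function of semistandard ribbon tableaux of shape $\boldsymbol{\nu}(\sigma)$, i.e.\ with $G_{\boldsymbol{\nu}(\sigma)}(\mathbf{x};t)$ in the tableau formulation. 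One checks that the ``tall'' condition on $\sigma$ is exactly what forces every ribbon to be a vertical strip.

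With the identification in hand, the positivity is a citation. Grojnowski and Haiman realize $G_{\boldsymbol{\nu}}(\mathbf{x};t)$ as the image of a standard-monomial basis vector in a level-one Fock space for $U_{q}(\widehat{\mathfrak{sl}}_{\ell})$ (with $\ell$ at least the number of components), and show that its expansion in the Schur basis --- which corresponds to passing to the global crystal (canonical) basis --- has coefficients given, up to a power of $t$, by parabolic Kazhdan--Lusztig polynomials for the affine symmetric group. These polynomials have nonnegative integer coefficients by virtue of their geometric interpretation as Poincar\'{e} polynomials of local intersection cohomology of (parabolic) Schubert varieties; through the decomposition theorem this rests on Kazhdan--Lusztig, Beilinson--Bernstein--Deligne, and Brylinski--Kashiwara, in the parabolic form due to Deodhar. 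Applying $\omega$, which merely transposes each $s_{\lambda}$, preserves Schur positivity, so $G_{\sigma}(\mathbf{x};t)$ is positive in the Schur basis.

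The main obstacle is the combinatorial dictionary of the first paragraph: pinning down $\boldsymbol{\nu}(\sigma)$ precisely, reconciling the ascent statistic on colorings with the spin/inversion statistic on ribbon tableaux, and keeping careful track of the compensating power of $t$ and of the $\omega$-twist so that the positivity statement transfers cleanly. The Fock-space input itself is then black-boxed.

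Finally, I would note an alternative that fits the present paper, although it falls short of the full claim: since $\psi^{\sigma}$ is a genuine character of $\UT_{n}$ (Proposition~\ref{prop:psidecomp}), $\ind^{\GL}_{\UT}(\psi^{\sigma})$ is a genuine character of $\GL_{n}$, so by the formula for $\canoUC$ the symmetric function $\canoUC \circ \ind^{\GL}_{\UT}(\psi^{\sigma}) = \sum_{\lambda} \langle \ind^{\GL}_{\UT}(\psi^{\sigma}),\chi^{\lambda}\rangle\, s_{\lambda}$ has nonnegative integer coefficients; by Theorem~\ref{thm:LLTbijection} it equals $(q-1)^{|\diag(\sigma)|}\,\omega G_{\sigma}(\mathbf{x};q)$, whence $G_{\sigma}(\mathbf{x};q)$ is Schur positive for every prime power $q$. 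This controls only the Schur coefficients evaluated at prime powers, not the $t$-polynomial coefficients themselves, which is precisely why the canonical-basis route is the one that yields Theorem~\ref{thm:LLTschurpos} in full.
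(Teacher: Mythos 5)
Your proposal matches the paper's treatment: the paper does not prove this statement but simply cites Grojnowski--Haiman (adapting their result on general LLT polynomials to the vertical-strip case), which is exactly your route of identifying $G_{\sigma}(\mathbf{x};t)$ with an LLT polynomial for a tuple of vertical strips and black-boxing the canonical-basis positivity. Your closing caveat is also exactly the paper's own observation in Section~\ref{sec:GLspos}: the character-theoretic argument via Theorem~\ref{thm:LLTbijection} only gives nonnegativity of $b_{\lambda}^{\sigma}(q)$ at prime powers and does not reprove Schur positivity of the $t$-polynomial.
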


The proof in~\cite{GrojHai} is algebraic and does not construct the Schur coefficients, although some special cases are known, including the $q$-Kostka numbers~\cite{LascSchutz} and the results of~\cite{HuhNamYoo, Tom}.

\begin{question}[{\cite[Open Problem 6.6]{HaglundBook}}]
\label{question:LLT}
Find a (manifestly positive) combinatorial formula for the Schur coefficients of $G_{\sigma}(\mathbf{x}; t)$.
\end{question}

The $e$-positivity of vertical strip LLT polynomials is also the subject of study; in this context, the paradigm is altered by substituting $t+1$ for $t$.

\begin{thm}[{\cite[Theorem~5.5]{Dadd}}]
\label{thm:LLTeposExistence}
For each $\sigma \in \TTT\SSS$, $G_{\sigma}(\mathbf{x}; t+1)$ is $e$-positive.
\end{thm}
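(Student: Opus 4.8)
This is D'Adderio's theorem \cite[Theorem~5.5]{Dadd}; I sketch the shape a proof would take and where its weight lies, translating the statement through the framework of this paper.

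\emph{Reformulation.} Applying the involution $\omega$, which interchanges the $e$- and $h$-bases of $\Sym$, to Theorem~\ref{thm:LLTbijection}, and using polynomial interpolation in the prime power $q$, the $e$-positivity of $G_{\sigma}(\mathbf{x};t+1)$ is equivalent to the assertion that
\[
\tfrac{1}{(q-1)^{|\diag(\sigma)|}}\,\canoUC \circ \ind^{\GL}_{\UT}(\psi^{\sigma})
\]
has an $h$-expansion whose coefficients are polynomials in $q-1$ with nonnegative coefficients (set $t = q-1$). The dictionary on the $\GL_n$ side is that $\canoUC$ sends the parabolic permutation character $\ind^{\GL_n}_{P_{\mu}}(\mathbbm{1})$ to $e_{\mu}$ and the Harish--Chandra induction of $\operatorname{St}_{\mu_{1}} \boxtimes \cdots \boxtimes \operatorname{St}_{\mu_{k}}$ from the Levi $\GL_{\mu_{1}} \times \cdots \times \GL_{\mu_{k}}$ to $h_{\mu}$; both follow from $\canoUC(\chi^{\lambda}) = s_{\lambda}$ and the multiplicativity of $\canoUC$. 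So it would suffice to exhibit $\ind^{\GL}_{\UT}(\psi^{\sigma})$, modulo characters with no unipotent constituent, as a nonnegative $\ZZ[q-1]$-linear combination of $(q-1)^{|\diag(\sigma)|}$ times such Harish--Chandra inductions of Steinberg characters; when $\sigma$ is a Dyck path this is a decomposition of the honest permutation module $\CC[\GL_n/\UT_{\graph(\sigma)}]$.

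\emph{Strategy.} I would produce this decomposition via D'Adderio's recursion on the $G_{\sigma}$. One establishes a family of linear relations among vertical-strip LLT polynomials --- morally, relating an index $\sigma$ to two indices $\sigma', \sigma''$ of strictly smaller area-plus-diagonal content by resolving an attacking pair of cells --- whose structure constants become nonnegative polynomials in $t$ after the substitution $t \mapsto t+1$, so that the recursion visibly preserves $e$-positivity; one then checks that it terminates at base cases, with all cells mutually non-attacking, where $G_{\sigma}(\mathbf{x};t)$ is literally a product of elementary symmetric functions. Transported through Theorems~\ref{thm:scfNG} and~\ref{thm:LLTbijection}, these relations become identities among the characters $\psi^{\sigma}$ (equivalently among the permutation characters $\permchar^{\gamma}$, cf.~Proposition~\ref{prop:psidecomp}) inside $\cf(\GL_{\bullet})$, with the terminal cases corresponding to the Harish--Chandra inductions of Steinberg characters above; running the recursion builds the required nonnegative decomposition. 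The unicellular case can be isolated first and is linked, through the Carlsson--Mellit plethystic identity of Section~\ref{sec:mainresult2}, to the chromatic quasisymmetric function; the general tall Schr\"{o}der case then follows, though the signs in the sieve of Proposition~\ref{prop:LLTsieve}(ii) force one to argue an accompanying divisibility.

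\emph{Main obstacle.} The representation theory of this paper yields \emph{Schur} positivity of $\omega G_{\sigma}(\mathbf{x};q)$ essentially for free: $\ind^{\GL}_{\UT}(\psi^{\sigma})$ is a genuine character of $\GL_n$ by Proposition~\ref{prop:psidecomp} (or, via Remark~\ref{rem:LLTnofactor}, one may induce a single honest summand of $\psi^{\sigma}$), which recovers Theorem~\ref{thm:LLTschurpos}. But it says nothing about the finer $h$- or $e$-expansion, because controlling the decomposition of $\CC[\GL_n/\UT_{\gamma}]$ into parabolically induced pieces is precisely the $\GL_n$-shadow of the open Stanley--Stembridge problem underlying Conjecture~\ref{conj:epos}. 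The real content is therefore the existence of the linear relations and their positivity after $t \mapsto t+1$ --- equivalently, the module decomposition into Harish--Chandra inductions of Steinberg characters --- and establishing this is where D'Adderio's argument does its work; no shortcut is visible on the representation-theoretic side, which is why the theorem required a dedicated combinatorial proof.
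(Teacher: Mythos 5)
The paper does not prove this statement: it is imported verbatim from D'Adderio, and the only thing the paper later does with it is restate the explicit Alexandersson--Sulzgruber refinement (Theorem~\ref{thm:LLTepos}) and push it through $\canoUC$ to get Corollary~\ref{cor:GLeposLLT}. So there is no internal proof to compare against, and your decision to treat the result as external and only sketch its shape is consistent with how the paper itself handles it. Your translation into the $\GL_n$ framework is also correct: with the paper's conventions $\canoUC(\chi^{(1^n)}) = s_{(1^n)} = e_n$ and $\canoUC(\operatorname{St}_n) = h_n$, so parabolic permutation characters go to $e_\mu$ and products of Steinbergs to $h_\mu$, and the substitution $t = q-1$ together with $\omega$ turns $e$-positivity of $G_\sigma(\mathbf{x};t+1)$ into the $h$-positivity-in-$(q-1)$ statement you write down. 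Note that the decomposition you ask for is essentially what Corollary~\ref{cor:GLeposLLT} exhibits, with degenerate Gelfand--Graev characters $\Gamma_\mu$ in place of your Harish--Chandra-induced Steinbergs (these agree after $\canoUC$, since both map to $h_\mu$, though they differ as characters) --- but that corollary is \emph{deduced from} the combinatorial theorem, so it cannot be used to prove it, exactly as your ``main obstacle'' paragraph says.

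That said, read as a standalone proof your text has a genuine gap, and you should flag it as such rather than as a completed argument: the family of linear relations among the $G_\sigma$, the nonnegativity of their structure constants after $t \mapsto t+1$, and the termination at product-of-$e$'s base cases are all asserted, not established --- and that is the entire content of D'Adderio's Theorem~5.5. The reduction from tall Schr\"{o}der paths to Dyck paths via Proposition~\ref{prop:LLTsieve}(ii) is, as you suspect, not viable as stated: the right-hand side is an alternating sum, so $e$-positivity of the unicellular summands transfers neither to the sum nor through the division by $(q-1)^{|\diag(\sigma)|}$; D'Adderio handles the vertical-strip case directly by the recursion rather than by such a sieve. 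If you intend this as a citation-plus-context (as the paper does), say so explicitly; if you intend it as a proof, the recursion must actually be written down and verified.
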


The paper~\cite{AS} proves an explicit combinatorial formula for the $e$-coefficients in Theorem~\ref{thm:LLTeposExistence}, which will be restated in Section~\ref{sec:GLeposLLT}.  Using Theorem~\ref{thm:LLTbijection}, this formula implies a result about the characters $\ind^{\GL}_{\UT}(\psi^{\sigma})$, which inadvertently gives some representation theoretic intuition for the $t \leftrightarrow t+1$ shift above.





Returning to the general discussion of positivity, if a polynomial $f(\mathbf{x}; t) \in \Sym[t]$ is positive with respect to a certain basis, then evaluating $t$ at any positive integer will give a symmetric function with nonnegative integer coefficients in the chosen basis.  
Thus, evaluating $t = q$ above gives positivity results about the $\GL_{n}$ characters in this paper.  
Conversely, polynomial equations can be verified on any infinite set---like the set of prime powers---so $\GL_{n}$ characters offer a novel approach to some of the open problems above.  

This section reinterprets each of the positivity statements above in the context of $\GL_{n}$ representation theory.  Section~\ref{sec:GLepos} will discuss the $e$-positivity of the chromatic quasisymmetric function, Section~\ref{sec:GLspos} will discuss Schur positivity of the vertical strip LLT polynomials, and Section~\ref{sec:GLeposLLT} will discuss the implications of the $e$-positivity of vertical strip LLT polynomials.


\subsection{Interpreting the $e$-positivity of $X_{\gamma}(\mathbf{x}; t)$}
\label{sec:GLepos}

In light of Theorem~\ref{thm:CQSbijection}, there should be a restatement of Conjecture~\ref{conj:epos} involving the characters $\ind^{\GL_{n}}_{\UT_{\gamma}}(\mathbbm{1})$.  
However, the isomorphism $\canoUS$ in Theorem~\ref{thm:CQSbijection} does not associate $e_{\lambda}$ to a character of $\GL_{n}$, so some interpretation is required.  
My choice to use the particular restatement below is informed by ongoing work on the subject.

Recall the Steinberg character $\operatorname{St}_{n} \in \cf(\GL_{n})$ defined in Section~\ref{sec:cfunchar}.   For any partition $\lambda = (\lambda_{1}, \ldots, \lambda_{\ell})$, define $\operatorname{St}_{\lambda} \in \cf(\GL_{\bullet})$ to be the product
\[
\operatorname{St}_{\lambda} = \operatorname{St}_{\lambda_{1}} \operatorname{St}_{\lambda_{2}} \cdots \operatorname{St}_{\lambda_{\ell}}.
\]

\begin{conj}
\label{conj:GLepos}
Let $n \ge 0$ and $\gamma \in \III\GGG_{n}$.  There are polynomials $a_{\lambda}^{\gamma}(t) \in \ZZ_{\ge 0}[t]$
 such that for each prime power $q$ the character
\[
\eta_{\gamma} = \sum_{\lambda \in \PPP_{n}} a_{\lambda}^{\gamma}(q) \operatorname{St}_{\lambda}
\]
satisfies $(q-1)^{n} \eta_{\gamma}(u) = \ind^{\GL_{n}}_{\UT_{\gamma}}(\mathbbm{1})(u)$ for every unipotent element $u \in \GL_{n}(\FF_{q})$.
\end{conj}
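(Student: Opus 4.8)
The plan is to pin down the polynomials $a_{\lambda}^{\gamma}(t)$ explicitly, prove the asserted character identity for those polynomials \emph{unconditionally}, and then observe that the only remaining ingredient --- nonnegativity of the coefficients --- is exactly Conjecture~\ref{conj:epos}. Concretely, I would take $a_{\lambda}^{\gamma}(t)$ to be the coefficients in the elementary expansion $X_{\gamma}(\mathbf{x};t) = \sum_{\lambda \in \PPP_{n}} a_{\lambda}^{\gamma}(t)\, e_{\lambda}$; since the $t$-coefficients of $X_{\gamma}(\mathbf{x};t)$ are integral symmetric functions (each is a nonnegative integer combination of monomial symmetric functions, by Corollary~\ref{cor:GPidentity} or \cite[Theorem~4.5]{ShWa}) and the $e_{\lambda}$ form a $\ZZ$-basis of $\Sym$, we get $a_{\lambda}^{\gamma}(t) \in \ZZ[t]$ for free.

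The core step is the identity, on unipotent elements,
\[
\canoUS^{-1}(e_{\lambda})(u) = \operatorname{St}_{\lambda}(u) \qquad \text{for all unipotent } u \in \GL_{n},\ \lambda \in \PPP_{n}.
\]
Granting this, Theorem~\ref{thm:CQSbijection} gives $\ind^{\GL_{n}}_{\UT_{\gamma}}(\mathbbm{1}) = (q-1)^{n}\canoUS^{-1}(X_{\gamma}(\mathbf{x};q)) = (q-1)^{n}\sum_{\lambda} a_{\lambda}^{\gamma}(q)\,\canoUS^{-1}(e_{\lambda})$, and evaluating at a unipotent $u$ and substituting the displayed identity yields $\ind^{\GL_{n}}_{\UT_{\gamma}}(\mathbbm{1})(u) = (q-1)^{n}\sum_{\lambda} a_{\lambda}^{\gamma}(q)\operatorname{St}_{\lambda}(u) = (q-1)^{n}\eta_{\gamma}(u)$, which is the asserted equality. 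To prove the displayed identity I would induct on $\ell(\lambda)$. Base case $\lambda = (k)$: the Hall--Littlewood polynomial $P_{(1^{k})}(\mathbf{x};t)$ equals $e_{k}$ --- $(1^{k})$ is the dominance-minimal partition of $k$, so $P_{(1^{k})} = m_{(1^{k})} = e_{k}$, independent of $t$ --- whence $\canoUS(\delta_{(1^{k})}) = q^{-\binom{k}{2}}e_{k}$ and $\canoUS^{-1}(e_{k}) = q^{\binom{k}{2}}\delta_{(1^{k})}$; since the unipotent class $O_{(1^{k})}$ is the singleton $\{1_{k}\}$, this class function takes the value $q^{\binom{k}{2}} = |\UT_{k}|$ at $1_{k}$ and $0$ at every other unipotent element, matching the value of $\operatorname{St}_{k}$ on unipotent elements recalled in the proof of Theorem~\ref{thm:canoLLT}. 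Inductive step: the product on $\cf(\GL_{\bullet})$ --- and its restriction to the sub-Hopf algebra $\cfunsupp(\GL_{\bullet})$ --- is parabolic induction, and the standard induced-character formula shows that the value of such an induced character at a unipotent element depends only on the values of the inducing class functions at unipotent elements of the Levi; hence $\canoUS^{-1}(e_{\lambda}) = \prod_{i}\canoUS^{-1}(e_{\lambda_{i}})$ and $\operatorname{St}_{\lambda} = \prod_{i}\operatorname{St}_{\lambda_{i}}$ agree on unipotent elements once the single-part factors do, completing the induction.

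The step I expect to be the main obstacle is the positivity $a_{\lambda}^{\gamma}(t) \in \ZZ_{\ge 0}[t]$, and it is genuinely hard: one cannot evade it by choosing the $a_{\lambda}^{\gamma}$ differently, because $\{\canoUS^{-1}(e_{\lambda}) \mid \lambda \in \PPP_{n}\}$ is a basis of $\cfunsupp(\GL_{n})$ whose members are determined by their restrictions to unipotent classes, so the identity in the conjecture forces $a_{\lambda}^{\gamma}(q)$ to be the $e$-coefficient of $X_{\gamma}(\mathbf{x};q)$ for every prime power $q$, and hence forces $a_{\lambda}^{\gamma}$ to be the $e$-coefficient of $X_{\gamma}(\mathbf{x};t)$. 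Thus Conjecture~\ref{conj:GLepos} is equivalent to the Shareshian--Wachs $e$-positivity conjecture (Conjecture~\ref{conj:epos}), and an actual proof of the nonnegativity would require a positivity mechanism not visible from the Hopf-algebraic setup --- for instance an honest $\GL_{n}$-module (or a filtration, or a sign-reversing involution on proper colorings) exhibiting $(q-1)^{n}\ind^{\GL_{n}}_{\UT_{\gamma}}(\mathbbm{1})$, restricted to unipotent elements, as a nonnegative sum of copies of the unipotent restrictions of the characters $\operatorname{St}_{\lambda}$, in the spirit of the long-sought permutation-module proof of the Stanley--Stembridge conjecture. Pending that, the conjecture can be verified in every case where an $e$-positive formula for $X_{\gamma}(\mathbf{x};t)$ is known (e.g.\ \cite{AbreuNigro, CMP, HaradaPrecup, HuhNamYoo}) by transporting that formula through Theorem~\ref{thm:CQSbijection} and the identity above.
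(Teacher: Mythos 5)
Your proposal is as correct as the situation allows: the statement is an open conjecture that neither the paper nor you proves, and what you do establish unconditionally --- that the identity on unipotent elements forces $a_{\lambda}^{\gamma}(q)$ to be the $e$-expansion coefficients of $X_{\gamma}(\mathbf{x};q)$, via the key fact that $\canoUS^{-1}(e_{\lambda})$ and $\operatorname{St}_{\lambda}$ agree on unipotent elements --- is precisely the content of the paper's accompanying proposition that Conjecture~\ref{conj:GLepos} is equivalent to Conjecture~\ref{conj:epos}. The only difference is cosmetic: where you check multiplicativity of unipotent values by hand from the induced character formula, the paper invokes Zelevinsky's fact that $\psi \shortmapsto \psi|_{\mathrm{uni}}$ is a Hopf algebra homomorphism onto $\cfunsupp(\GL_{\bullet})$, together with $\operatorname{St}_{n}|_{\mathrm{uni}} = q^{\binom{n}{2}}\delta_{(1^{n})}$ and $P_{(1^{n})} = e_{n}$, exactly as in your base case.
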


\begin{prop}
Conjectures~\ref{conj:epos} and~\ref{conj:GLepos} are equivalent.
\end{prop}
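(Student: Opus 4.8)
The plan is to push both conjectures through the isomorphism $\canoUS$ and Theorem~\ref{thm:CQSbijection}, reducing each to the single statement that the polynomials $c^\gamma_\lambda(t)\in\ZZ[t]$ defined by the $e$-expansion $X_\gamma(\mathbf{x};t)=\sum_{\lambda\in\PPP_n}c^\gamma_\lambda(t)\,e_\lambda$ all lie in $\ZZ_{\ge 0}[t]$; these exist, are unique, and are polynomials because $\{e_\lambda\}$ is a $\ZZ$-basis of $\Sym$ and $X_\gamma\in\Sym[t]$. The only genuinely new ingredient is the image under $\canoUS$ of the products $\operatorname{St}_\lambda$, once they are ``restricted to the unipotent set.'' So I would first introduce the linear map $\operatorname{res}^{\mathrm{uni}}\colon\cf(\GL_n)\to\cfunsupp(\GL_n)$ sending a class function to its product with the indicator of the set of unipotent elements. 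Two easy facts: $\operatorname{res}^{\mathrm{uni}}$ is the identity on $\cfunsupp(\GL_n)$, so two elements of $\cfunsupp(\GL_n)$ coincide as soon as they agree on unipotent elements; and $\bigoplus_n\operatorname{res}^{\mathrm{uni}}$ is an algebra homomorphism $\cf(\GL_\bullet)\to\cfunsupp(\GL_\bullet)$. The latter follows from the Harish--Chandra induction formula: for $\phi_i\in\cf(\GL_{n_i})$ and a unipotent $g$, any $\GL_{n_1+n_2}$-conjugate of $g$ lying in the parabolic $P$ with Levi $\GL_{n_1}\times\GL_{n_2}$ has unipotent image in that Levi, so $(\phi_1\phi_2)(g)$ depends only on $\operatorname{res}^{\mathrm{uni}}(\phi_1)$ and $\operatorname{res}^{\mathrm{uni}}(\phi_2)$; since both $\operatorname{res}^{\mathrm{uni}}(\phi_1\phi_2)$ and $\operatorname{res}^{\mathrm{uni}}(\phi_1)\operatorname{res}^{\mathrm{uni}}(\phi_2)$ lie in the sub-Hopf algebra $\cfunsupp(\GL_\bullet)$ and agree on unipotent elements, they are equal.

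\textbf{Key computation.} Next I would pin down $\canoUS\bigl(\operatorname{res}^{\mathrm{uni}}(\operatorname{St}_\lambda)\bigr)$. The value of the Steinberg character on unipotent elements recalled in the proof of Theorem~\ref{thm:canoLLT} — namely $|\UT_n|=q^{\binom{n}{2}}$ at $1_n$ and $0$ at every other unipotent element — gives $\operatorname{res}^{\mathrm{uni}}(\operatorname{St}_n)=q^{\binom{n}{2}}\,\delta_{(1^n)}$. Feeding this into the definition $\canoUS(\delta_\lambda)=\widetilde{P_\lambda}(\mathbf{x};q)=q^{-n(\lambda)}P_\lambda(\mathbf{x};q^{-1})$ and using the classical identities $P_{(1^n)}(\mathbf{x};t)=e_n$ and $n((1^n))=\binom{n}{2}$ yields $\canoUS\bigl(\operatorname{res}^{\mathrm{uni}}(\operatorname{St}_n)\bigr)=q^{\binom{n}{2}}\cdot q^{-\binom{n}{2}}\,e_n=e_n$. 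Since $\operatorname{St}_\lambda=\operatorname{St}_{\lambda_1}\cdots\operatorname{St}_{\lambda_\ell}$ and both $\operatorname{res}^{\mathrm{uni}}$ and $\canoUS$ are algebra homomorphisms, this upgrades to
\[
\canoUS\bigl(\operatorname{res}^{\mathrm{uni}}(\operatorname{St}_\lambda)\bigr)=e_\lambda\qquad\text{for every }\lambda\in\PPP,
\]
so in particular $\{\operatorname{res}^{\mathrm{uni}}(\operatorname{St}_\lambda)\}_{\lambda\in\PPP_n}$ is a basis of $\cfunsupp(\GL_n)$.

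\textbf{Assembling the equivalence.} Fix $\gamma\in\III\GGG_n$. For any polynomials $b_\lambda(t)$ and any prime power $q$, put $\eta=\sum_\lambda b_\lambda(q)\operatorname{St}_\lambda$. Since $\ind^{\GL_n}_{\UT_\gamma}(\mathbbm 1)\in\cfunsupp(\GL_n)$ (by Theorem~\ref{thm:CQSbijection}), the condition ``$(q-1)^n\eta(u)=\ind^{\GL_n}_{\UT_\gamma}(\mathbbm 1)(u)$ for every unipotent $u$'' is equivalent to $(q-1)^n\operatorname{res}^{\mathrm{uni}}(\eta)=\ind^{\GL_n}_{\UT_\gamma}(\mathbbm 1)$ in $\cfunsupp(\GL_n)$, hence — applying $\canoUS$, the key computation, and Theorem~\ref{thm:CQSbijection} — to $\sum_\lambda b_\lambda(q)\,e_\lambda=X_\gamma(\mathbf{x};q)$, i.e. to $b_\lambda(q)=c^\gamma_\lambda(q)$ for all $\lambda$. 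If Conjecture~\ref{conj:epos} holds, take $a^\gamma_\lambda=c^\gamma_\lambda\in\ZZ_{\ge 0}[t]$; the identity above then holds for every $q$, so Conjecture~\ref{conj:GLepos} holds. Conversely, if Conjecture~\ref{conj:GLepos} holds with polynomials $a^\gamma_\lambda(t)\in\ZZ_{\ge 0}[t]$, then $a^\gamma_\lambda(q)=c^\gamma_\lambda(q)$ for every prime power $q$; as there are infinitely many prime powers, $a^\gamma_\lambda=c^\gamma_\lambda$ as polynomials, so $c^\gamma_\lambda(t)\in\ZZ_{\ge 0}[t]$ and Conjecture~\ref{conj:epos} holds.

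\textbf{Main obstacle.} Everything downstream of the key computation is formal: a density argument over the infinite set of prime powers, together with Theorem~\ref{thm:CQSbijection}, which already carries the representation-theoretic content. The delicate part is the key computation itself — identifying $\canoUS\bigl(\operatorname{res}^{\mathrm{uni}}(\operatorname{St}_\lambda)\bigr)$ as $e_\lambda$ — and within it the multiplicativity of $\operatorname{res}^{\mathrm{uni}}$, which hinges on how Harish--Chandra induction interacts with the property of being supported on unipotent elements.
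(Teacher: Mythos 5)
Your proof is correct and follows essentially the same route as the paper: restrict to unipotent support, compute $\canoUS(\operatorname{St}_{\lambda}|_{\text{uni}}) = e_{\lambda}$ via the Steinberg values and $\widetilde{P}_{(1^n)}(\mathbf{x};q) = e_n$, and conclude with Theorem~\ref{thm:CQSbijection}. The only differences are cosmetic: you prove the multiplicativity of the unipotent-restriction map directly via Harish--Chandra induction where the paper cites Zelevinsky, and you spell out the prime-power-density step that the paper leaves implicit.
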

\begin{proof}
For a class function $\psi \in \cf(\GL_{n})$, write $\psi|_{\text{uni}} \in \cfunsupp(\GL_{\bullet})$ for the element defined by
\[
\psi|_{\text{uni}}(g) = \begin{cases} \psi(g) & \text{if $g$ is unipotent} \\ 0 & \text{otherwise,} \end{cases}
\]
so that Conjecture~\ref{conj:GLepos} states $\sum_{\lambda \in \PPP_{n}} a_{\lambda}^{\gamma}(q) \operatorname{St}_{\lambda}|_{uni} = \frac{1}{(q-1)^{n}}\ind^{\GL_{n}}_{\UT_{\gamma}}(\mathbbm{1})$.

I now claim that $\canoUS(\operatorname{St}_{\lambda}|_{uni}) = e_{\lambda}$, so that with the preceeding remarks and Theorem~\ref{thm:CQSbijection} the proof will be complete.  
The claim is relatively well-known, but a proof sketch is included for the sake of completeness.  
Direct computation gives that $\operatorname{St}_{n}|_{\text{uni}} = q^{\binom{n}{2}} \delta_{(1^{n})}$ (see the proof of Theorem~\ref{thm:canoLLT}), and 
\[
 \canoUS\big( q^{\binom{n}{2}}  \delta_{(1^{n})} \big)  = \tilde{P}_{(1^{n})}(\mathbf{x}; q)  = e_{n},
\]
with the second equality due to the definition of the Hall--Littlewood polynomial; see~\cite[\nopp III.2 (2.8)]{Mac}.  The claim then follows from the fact~\cite[\nopp 10.1]{Zel} that the extension of $\psi \shortmapsto \psi_{\text{uni}}$ to all of  $\cf(\GL_{\bullet})$ is a Hopf algebra homomorphism to $\cfunsupp(\GL_{\bullet})$.
\end{proof}

\begin{rems}
\begin{enumerate}[label=(R\arabic*)]
\item A direct proof of Conjecture~\ref{conj:GLepos} would probably find an organic realization of the character $\eta_{\gamma}$ using the representation theory of $\GL_{n}$, and in a manner which does not depend on $q$.  
Ongoing work has identified a promising candidate for the character $\eta_{\gamma}$, but has not led to any progress on the conjecture itself.

\item It is not clear that Conjecture~\ref{conj:GLepos} offers an easier approach to Conjecture~\ref{conj:epos} than other equivalent statements.  However, as the clearest restatement of Conjecture~\ref{conj:epos} in the $\GL_{n}(\FF_{q})$ context, the wide interest in $e$-positivity seems to justify its inclusion.

\end{enumerate}
\end{rems}

\subsection{Interpreting the Schur positivity of $G_{\sigma}(\mathbf{x}; t)$}
\label{sec:GLspos}

Let $\sigma$ be a tall Schr\"{o}der path and write
\[
G_{\sigma}(\mathbf{x}; t) = \sum_{\lambda \in \PPP} b_{\lambda}^{\sigma}(t) s_{\lambda}.
\]
It is immediate that each $b_{\lambda}^{\sigma}(t)$ is a polynomial in $t$ with integral coefficients, and the content of Theorem~\ref{thm:LLTschurpos} is that the coefficients of this polynomial are nonnegative.  

Recall from Section~\ref{sec:cfunchar} that the irreducible unipotent characters of $\GL_{n}$ are $\{\chi^{\lambda} \;|\; \lambda \in \PPP_{n}\}$, and that $\canoUC(\chi^{\lambda}) = s_{\lambda}$ for each partition $\lambda \in \PPP$.  Thus, for a tall Schr\"{o}der path $\sigma$, Theorem~\ref{thm:LLTbijection} implies that 
\begin{equation}
\label{eq:GLLLTschurpos}
(q-1)^{|\diag(\sigma)|} b_{\lambda}^{\sigma}(q) = \langle \chi^{\lambda'}, \ind^{\GL}_{\UT}(\psi^{\sigma}) \rangle ,
\end{equation}
which is the multiplicity of the irreducible unipotent $\GL_{n}$-module indexed by $\lambda'$ in the $\GL_{n}$-module affording $\ind^{\GL}_{\UT}(\psi^{\sigma})$.  Thus, Theorem~\ref{thm:LLTbijection} implies the known fact that $b_{\lambda}^{\sigma}(q)$ is nonnegative for each prime power $q$, but falls short of giving a second proof of Theorem~\ref{thm:LLTschurpos}: a polynomial with negative coefficients can still take on infinitely many positive values.  Nonetheless, progress on Open Problem~\ref{question:LLT} might be obtained through explicit representation theoretic formulas.  


\begin{question}
For $n \ge 0$, $\sigma \in \TTT\SSS_{n}$, and $\lambda \in \PPP_{n}$, find a combinatorial formula for $ \langle \chi^{\lambda'}, \ind^{\GL}_{\UT}(\psi^{\sigma}) \rangle$ as a function of $q$.
\end{question}

Such a formula would almost certainly be divisible by $(q-1)^{|\diag(\sigma)|}$ in a straightforward manner; see Remark~\ref{rem:LLTnofactor}.  This would give an answer to Open Problem~\ref{question:LLT}.

\subsection{Interpreting the $e$-positivity of $G_{\sigma}(\mathbf{x}; t)$}
\label{sec:GLeposLLT}
\newcommand{\bbGamma}{\Gamma}

The final section of this paper will show how the explicit $e$-positivity formula for vertical strip LLT polynomials given in~\cite{AS} leads to a deeper understanding of the characters $\ind^{\GL}_{\UT}(\psi^{\sigma})$ from Theorem~\ref{thm:LLTbijection}; see Corollary~\ref{cor:GLeposLLT}.  I will begin by recalling the main result of~\cite{AS}.

Fix a graph $\gamma = ([n], E(\gamma))$ on $[n]$.  An \emph{orientation} of $\gamma$ is a collection of directed edges
\[
\theta = \{(i, j) \;|\; \{i, j\} \in E(\gamma)\},
\]
so that $([n], \theta)$ is a directed graph whose underlying undirected graph is $\gamma$.  For example, with
\begin{equation}
\label{eq:hrvexample}
\gamma = \begin{tikzpicture}[scale = 0.5, baseline = 0.5*-0.3cm]
\draw[fill] (0, 0) circle (2pt) node[inner sep = 1pt] (1) {};
\draw[fill] (1, 0) circle (2pt) node[inner sep = 1pt] (2) {};
\draw[fill] (2, 0) circle (2pt) node[inner sep = 1pt] (3) {};
\draw[fill] (3, 0) circle (2pt) node[inner sep = 1pt] (4) {};
\draw[below] (1) node {$\scriptstyle 1$};
\draw[below] (2) node {$\scriptstyle 2$};
\draw[below] (3) node {$\scriptstyle 3$};
\draw[below] (4) node {$\scriptstyle 4$};
\draw (1) -- (2);
\draw (2) -- (3);
\draw (1) to[out = 45, in = 135] (3);
\draw (3) -- (4);
\end{tikzpicture}
\qquad\text{and}\qquad
\theta = \{(2, 1), (1, 3), (3, 2), (3, 4)\}
\end{equation}
we have
\[
([n], \theta) = 
\begin{tikzpicture}[scale = 0.5, baseline = 0.5*-0.3cm]
\draw[fill] (0, 0) circle (2pt) node[inner sep = 1pt] (1) {};
\draw[fill] (1, 0) circle (2pt) node[inner sep = 1pt] (2) {};
\draw[fill] (2, 0) circle (2pt) node[inner sep = 1pt] (3) {};
\draw[fill] (3, 0) circle (2pt) node[inner sep = 1pt] (4) {};
\draw[below] (1) node {$\scriptstyle 1$};
\draw[below] (2) node {$\scriptstyle 2$};
\draw[below] (3) node {$\scriptstyle 3$};
\draw[below] (4) node {$\scriptstyle 4$};
\draw[stealth-] (1) -- (2);
\draw[stealth-] (2) -- (3);
\draw[-stealth] (1) to[out = 55, in = 125] (3);
\draw[-stealth] (3) -- (4);
\end{tikzpicture}.
\]
Write $\OOO(\gamma)$ for the set of orientations of $\gamma$.  For $\theta \in \OOO(\gamma)$ and $i \in [n]$, say that the \emph{highest reachable vertex} from $i$ under $\theta$ is 
\[
\operatorname{hrv}(\theta, i) = \max\left\{j \in [n] \;\middle|\; \text{there is an \textit{increasing} path in $([n], \theta)$ from $i$ to $j$}  \right\}.
\]
For example, taking $\gamma$ and $\theta$ as in Equation~\eqref{eq:hrvexample}
\[
\operatorname{hrv}(\theta, 1) = 4, \qquad
\operatorname{hrv}(\theta, 2) = 2, \qquad
\operatorname{hrv}(\theta, 3) = 4, \qquad\text{and}\qquad
\operatorname{hrv}(\theta, 4) = 4.
\]
Finally, for $\theta \in \OOO(\gamma)$, the \emph{type} of $\theta$ is the partition $\operatorname{type}(\theta) \in \PPP_{n}$ obtained by truncating all zeros from the non-increasing re-ordering of the sequence
\[
\big( |\{i \in [n] \;|\; \operatorname{hrv}(\theta, i) = 1\}|, \ldots,  |\{i \in [n] \;|\; \operatorname{hrv}(\theta, i) = n\}| \big).
\]
For example, taking $\gamma$ and $\theta$ as in Equation~\eqref{eq:hrvexample}, $\operatorname{type}(\theta) = (3, 1)$.

\begin{thm}[{\cite[Theorem 2.9]{AS}}]
\label{thm:LLTepos}
For $n \ge 0$, let $\sigma \in \TTT\SSS_{n}$ and let $\gamma$ be the natural unit interval order on $[n]$ with edge set $E(\gamma) = \area(\sigma) \cup \diag(\sigma)$.  Then
\[
G_{\sigma}(\mathbf{x}; t) = \hspace{-1em} \sum_{\substack{ \text{$\diag(\sigma)$-ascending} \\ \theta \in \OOO(\gamma) }} \hspace{-1em} (t-1)^{\textstyle |\{ \scriptstyle \{i, j\} \in \area(\sigma) \;\textstyle|\scriptstyle\; \text{$(i, j) \in \theta$ with $i < j$} \textstyle\}|} e_{\operatorname{type}(\theta)}
\]
where the sum is over orientations $\theta \in \OOO(\gamma)$ with $(i, j) \in \theta$ for each  $i < j$  with $\{i, j\} \in \diag(\sigma)$.
\end{thm}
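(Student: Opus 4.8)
The plan is to prove the identity in $\Sym[t]$ by a sign-reversing involution, after first reducing it to a statement about colorings. Both sides are homogeneous of degree $n$, so it suffices to compare coefficients against the complete homogeneous symmetric functions, i.e.\ to show $\langle G_{\sigma}(\mathbf{x};t), h_{\mu}\rangle = \sum_{\theta}(t-1)^{c(\theta)}\langle e_{\operatorname{type}(\theta)}, h_{\mu}\rangle$ for every $\mu\in\PPP_{n}$, where $c(\theta) = |\{\{i,j\}\in\area(\sigma) : (i,j)\in\theta,\ i<j\}|$ and the sum runs over the $\diag(\sigma)$-forward orientations of $\gamma = ([n],\area(\sigma)\cup\diag(\sigma))$. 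On the left, pairing the monomial expansion of $G_{\sigma}$ with $h_{\mu}$ extracts $\sum_{\kappa}t^{\asc_{([n],\area(\sigma))}(\kappa)}$ over $\kappa\in A(\sigma)$ of content $\mu$; on the right, $\langle e_{\lambda},h_{\mu}\rangle$ is the number of $0/1$-matrices with row sums $\lambda$ and column sums $\mu$, so the right side becomes a weighted count of pairs $(\theta,M)$.

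I would then expand $(t-1)^{c(\theta)} = \sum_{T}(-1)^{|T|}t^{\,c(\theta)-|T|}$ over subsets $T$ of the forward area edges of $\theta$, turning the right side into a signed sum over triples $(\theta,T,M)$. The aim is a sign-reversing, content- and $t$-degree-preserving involution on the non-fixed triples whose fixed points correspond bijectively to the colorings $\kappa\in A(\sigma)$ of content $\mu$, with $\kappa\mapsto t^{\asc(\kappa)}$. Concretely, $M$ together with $\theta$ should assign each vertex $i$ a tentative colour (its block in $\operatorname{type}(\theta)$, indexed by $\operatorname{hrv}(\theta,i)$ and resolved by $M$), hence a tentative coloring; one locates the first area edge that is ``inconsistent'' with this data --- oriented against the colours, or lying in $T$ while the colours already ascend along it, or the reverse --- and toggles its orientation or its $T$-membership, repairing $M$ minimally. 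A triple is fixed exactly when no such edge exists, which forces $T$ to be the set of non-ascents of the induced coloring and $\theta$ to be its $\kappa$-compatible orientation; since the edges of $\diag(\sigma)$ are frozen forward, the induced coloring automatically has $\kappa(i)<\kappa(j)$ along them, i.e.\ lies in $A(\sigma)$, and every such $\kappa$ occurs once.

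The hard part will be constructing and verifying this involution: showing the toggle never reverses a frozen diagonal edge, that any change it induces in $\operatorname{hrv}$ --- hence in $\operatorname{type}(\theta)$ and the shape of $M$ --- is absorbed consistently, that the map squares to the identity, that it reverses sign while preserving the column sums $\mu$ and the power of $t$, and that it has exactly the claimed fixed points. Controlling the interplay between edge reversals and the increasing-path statistic $\operatorname{hrv}$ is the delicate point and is essentially the combinatorial core of~\cite{AS}. A variant plan would be to first use Proposition~\ref{prop:LLTsieve}~(ii) to reduce to the unicellular case $\diag(\sigma)=\emptyset$ and handle that by the analogous involution, but then a second inclusion--exclusion is needed to account for the fact that $\operatorname{type}$ depends on which diagonal edges are present, so this is not obviously shorter.

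Finally I would record a representation-theoretic reformulation suggested by the rest of the paper: applying $\omega$ and comparing with Theorem~\ref{thm:LLTbijection}, the identity at $t=q$ --- hence, being a polynomial identity true at every prime power, for indeterminate $t$ --- is equivalent to the statement that $\ind^{\GL}_{\UT}(\psi^{\sigma})$ and $(q-1)^{|\diag(\sigma)|}\sum_{\theta}(q-1)^{c(\theta)}\operatorname{St}_{\operatorname{type}(\theta)}$ have the same irreducible unipotent constituents, using $\canoUC(\operatorname{St}_{\lambda})=h_{\lambda}$ together with formula~\eqref{eq:canoUCdef}. That reduces the theorem to evaluating the multiplicities $\langle\ind^{\GL_{n}}_{\UT_{\gamma}}(\mathbbm{1}),\chi^{\lambda}\rangle$ --- equivalently, by Frobenius reciprocity, the dimensions of the spaces of $\UT_{\gamma}$-invariants in the irreducible unipotent $\GL_{n}$-modules --- and matching these to the orientation count; I would not expect this to be genuinely easier, as it repackages the same sieve, but it is arguably the more natural framing in the context of this paper.
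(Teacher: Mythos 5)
This theorem is quoted in the paper as an external result --- it is stated with the citation \cite[Theorem 2.9]{AS} and no proof is given or intended; the paper treats it as a black box used to derive Corollary~\ref{cor:GLeposLLT}. So there is no internal argument to compare yours against, and the real question is whether your proposal would stand on its own as a proof. It would not, as written: the entire substance is delegated to the sign-reversing involution on triples $(\theta,T,M)$, which you describe only by the properties it must have (toggling the ``first inconsistent'' area edge, never reversing a frozen diagonal edge, repairing $M$, preserving content and $t$-degree, squaring to the identity, having exactly the colorings in $A(\sigma)$ as fixed points). You say explicitly that constructing and verifying this map is ``essentially the combinatorial core of~\cite{AS}'' --- which is precisely the part that cannot be deferred. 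The delicate point you correctly identify, namely that reversing a single edge can change $\operatorname{hrv}(\theta,\cdot)$ for many vertices at once and hence change $\operatorname{type}(\theta)$ and the admissible matrices $M$ globally, is exactly where a naive local toggle breaks down, and nothing in the proposal indicates how to absorb that. (For what it is worth, the actual proof in~\cite{AS} does not proceed by such an involution; it is built on linear relations among LLT polynomials indexed by local modifications of the path, so even the strategy is speculative rather than a reconstruction of the known argument.)

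The surrounding reductions are fine but do no real work: pairing with $h_{\mu}$ to compare monomial coefficients, the Gale--Ryser interpretation of $\langle e_{\lambda},h_{\mu}\rangle$, and the binomial expansion of $(t-1)^{c(\theta)}$ are all routine. Your closing representation-theoretic reformulation via $\canoUC(\operatorname{St}_{\lambda})=h_{\lambda}$ and Theorem~\ref{thm:LLTbijection} is consistent with what the paper does in Section~\ref{sec:GLeposLLT} (where the same content appears with degenerate Gelfand--Graev characters in place of $\operatorname{St}_{\lambda}$), but as you note it only repackages the identity; in this paper the logical direction is the reverse, with Theorem~\ref{thm:LLTepos} taken as input to deduce Corollary~\ref{cor:GLeposLLT}. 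If you want a complete proof, you must either construct and verify the involution in full detail or reproduce the relation-based argument of~\cite{AS}; the present text is a plan, not a proof.
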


Evaluating the identity above at $t = q$, the expression $q-1$ can be interpreted as $|\FF_{q}^{\times}|$, the number of units in the field $\FF_{q}$.  As $|\FF_{q}^{\times}|$ is a positive integer, it can be interpreted as the multiplicity of a submodule, as will be discussed at the end of this section.

The \emph{Gelfand--Graev character} of $\GL_{n}$~\cite{GelGra} is the class function
\[
\bbGamma_{n} = \frac{1}{(q-1)^{n-1}} \ind^{\GL}_{\UT} (\psi^{E D^{n-1} S}),
\]
where $\psi^{E D^{n-1} S}$ is as defined in Section~\ref{sec:psisigma}; as the name suggests, $\bbGamma_{n} $ is actually a character of $\GL_{n}$; see Remark~\ref{rem:LLTnofactor}.  The \emph{degenerate Gelfand--Graev character}~\cite[\nopp 12]{Zel} indexed by a partition $\lambda = (\lambda_{1}, \ldots, \lambda_{\ell})$ is 
\[
\bbGamma_{\lambda} = \bbGamma_{\lambda_{1}} \cdots \bbGamma_{\lambda_{\ell}}.
\]

\begin{cor}
\label{cor:GLeposLLT}
For $n \ge 0$, let $\sigma \in \TTT\SSS_{n}$, and let $\gamma$ be the natural unit interval order on $[n]$ with edge set $E(\gamma) = \area(\sigma) \cup \diag(\sigma)$.  Then
\[
\ind^{\GL}_{\UT}(\psi^{\sigma}) = \hspace{-1em} \sum_{\substack{ \text{$\diag(\sigma)$-ascending} \\ \theta \in \OOO(\gamma) }} \hspace{-1em} (q-1)^{\textstyle |\{ \scriptstyle \{i, j\} \in E(\gamma) \;\textstyle|\scriptstyle\; \text{$(i, j) \in \theta$ with $i < j$} \textstyle\}|} \bbGamma_{\operatorname{type}(\theta)}
\]
where the sum is over orientations $\theta \in \OOO(\gamma)$ with $(i, j) \in \theta$ for each  $i < j$  with $\{i, j\} \in \diag(\sigma)$.
\end{cor}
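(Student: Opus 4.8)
The plan is to reduce the asserted identity of $\GL_{n}$ class functions to an identity in $\Sym$ by applying the map $\canoUC$, and then to combine Theorem~\ref{thm:LLTbijection} with the Alexandersson--Sulzgruber formula of Theorem~\ref{thm:LLTepos}.

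First I would check that both sides of the claimed equation lie in the sub-Hopf algebra $\cfunsupp(\GL_{\bullet}) \subseteq \cf(\GL_{\bullet})$. The left side is $\ind^{\GL}_{\UT}(\psi^{\sigma})$ with $\psi^{\sigma}\in\scf(\UT_{\bullet})\subseteq\cf(\UT_{\bullet})$, so it lies in $\cfunsupp(\GL_{\bullet})=\ind^{\GL}_{\UT}(\cf(\UT_{\bullet}))$ by definition; and each $\bbGamma_{m}$ is a scalar multiple of $\ind^{\GL}_{\UT}(\psi^{E D^{m-1} S})$, hence also lies in $\cfunsupp(\GL_{\bullet})$, so each degenerate Gelfand--Graev character $\bbGamma_{\operatorname{type}(\theta)}$ does too (it is a product of such, and $\cfunsupp(\GL_{\bullet})$ is a subalgebra), and so does the right side. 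By the factorization $\Sym\xrightarrow{\canoUS^{-1}}\cfunsupp(\GL_{\bullet})\hookrightarrow\cf(\GL_{\bullet})\xrightarrow{\canoUC}\Sym$ recalled in Section~\ref{sec:mainresult2}, the restriction of $\canoUC$ to $\cfunsupp(\GL_{\bullet})$ is an isomorphism onto $\Sym$, hence injective; so it suffices to verify the identity after applying $\canoUC$.

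Next I would compute the two images. On the left, Theorem~\ref{thm:LLTbijection} gives $\canoUC(\ind^{\GL}_{\UT}(\psi^{\sigma}))=(q-1)^{|\diag(\sigma)|}\,\omega G_{\sigma}(\mathbf{x};q)$ outright. For the right side I would first treat one Gelfand--Graev character: the tall Schr\"{o}der path $E D^{m-1} S$ satisfies $\area(E D^{m-1} S)=\emptyset$ and $\diag(E D^{m-1} S)=\{\{i,i+1\}\mid 1\le i<m\}$, so $|\diag(E D^{m-1} S)|=m-1$ and $G_{E D^{m-1} S}(\mathbf{x};t)=e_{m}$ is independent of $t$ — the diagonal edges force $\kappa$ to be strictly increasing and the empty area makes every ascent statistic vanish. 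Hence Theorem~\ref{thm:LLTbijection} yields $\canoUC(\bbGamma_{m})=\tfrac{1}{(q-1)^{m-1}}(q-1)^{m-1}\omega e_{m}=\omega e_{m}$, and since $\canoUC$ is an algebra homomorphism, $\canoUC(\bbGamma_{\lambda})=\omega e_{\lambda}$ for every partition $\lambda$. Applying $\canoUC$ to the right side of the corollary and using linearity of $\omega$ therefore produces $\omega$ of $\sum_{\theta}(q-1)^{c(\theta)}e_{\operatorname{type}(\theta)}$, the sum running over $\diag(\sigma)$-ascending orientations $\theta$ of $\gamma$ with $c(\theta)=|\{\{i,j\}\in E(\gamma)\mid (i,j)\in\theta,\ i<j\}|$.

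Finally I would reconcile this with Theorem~\ref{thm:LLTepos}. For a $\diag(\sigma)$-ascending orientation every edge of $\diag(\sigma)$ is oriented from the smaller to the larger vertex, and since $\area(\sigma)\cap\diag(\sigma)=\emptyset$ for a tall Schr\"{o}der path, $c(\theta)$ equals $|\diag(\sigma)|$ plus the exponent $|\{\{i,j\}\in\area(\sigma)\mid (i,j)\in\theta,\ i<j\}|$ appearing in Theorem~\ref{thm:LLTepos}. Pulling the common factor $(q-1)^{|\diag(\sigma)|}$ out of the sum and invoking Theorem~\ref{thm:LLTepos} at $t=q$ collapses the image of the right side to $(q-1)^{|\diag(\sigma)|}\omega G_{\sigma}(\mathbf{x};q)$, which agrees with the image of the left side; injectivity of $\canoUC$ on $\cfunsupp(\GL_{\bullet})$ then closes the argument. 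I do not expect a serious obstacle here: the only step that is not pure bookkeeping is the evaluation $\canoUC(\bbGamma_{m})=\omega e_{m}$, i.e.\ the identification $G_{E D^{m-1} S}(\mathbf{x};t)=e_{m}$ followed by an application of Theorem~\ref{thm:LLTbijection}, and even this is routine — the corollary is in essence a repackaging of Theorems~\ref{thm:LLTbijection} and~\ref{thm:LLTepos} once one observes that the degenerate Gelfand--Graev characters are precisely the $\canoUC$-preimages of the $\omega e_{\lambda}$.
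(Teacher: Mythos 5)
Your proposal is correct and follows essentially the same route as the paper: reduce to an identity in $\Sym$ via the isomorphism $\canoUC$ restricted to $\cfunsupp(\GL_{\bullet})$ (composed with $\omega$), use Theorem~\ref{thm:LLTbijection} together with the Alexandersson--Sulzgruber formula of Theorem~\ref{thm:LLTepos}, and pin down the key evaluation $\omega\circ\canoUC(\bbGamma_{m}) = e_{m}$ by computing $G_{ED^{m-1}S}(\mathbf{x};q) = e_{m}$. Your explicit verification that the right-hand side lies in $\cfunsupp(\GL_{\bullet})$ and your bookkeeping of the exponent (splitting $E(\gamma)$ into $\area(\sigma)$ and $\diag(\sigma)$ for $\diag(\sigma)$-ascending orientations) are details the paper leaves implicit, but the argument is the same.
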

\begin{proof}
Since the map $\canoUC$ restricts to an isomorphism from $\cfunsupp(\GL_{\bullet})$ to $\Sym$ (discussed in Section~\ref{sec:mainresult2}), and the involution $\omega$ is also an isomorphism, it is sufficient to establish that the above equation holds after the application of $\omega \circ \canoUC$ to both sides.  By Theorem~\ref{thm:LLTbijection} and Theorem~\ref{thm:LLTepos}, the left side becomes
\[
\omega \circ \canoUC \circ \ind^{\GL}_{\UT} (\psi^{\sigma}) =  \hspace{-1em} \sum_{\substack{ \text{$\diag(\sigma)$-ascending} \\ \theta \in \OOO(\gamma) }} \hspace{-1em} (q-1)^{\textstyle |\{ \scriptstyle \{i, j\} \in E(\gamma) \;\textstyle|\scriptstyle\; \text{$(i, j) \in \theta$ with $i < j$} \textstyle\}|} e_{\operatorname{type}(\theta)},
\]
so the claim will follow from the equation $\omega \circ \canoUC(\bbGamma_{n}) = e_{n}$.  This fact is know, but a short proof is included below for completeness.

Theorem~\ref{thm:LLTbijection} states that $\omega \circ \canoUC(\bbGamma_{n}) = G_{ED^{n-1}S}(\mathbf{x}; q)$.  With $\diag(ND^{n-1}S) = \{\{i, i+1\} \;|\; 1 \le i < n\}$, the definition of vertical strip LLT polynomials given in Section~\ref{sec:LLT} becomes
\[
G_{ED^{n-1}S}(\mathbf{x}; q) = \hspace{-0.75em} 
\sum_{\substack{\kappa: [n] \shortto \ZZ_{>0} \\ \kappa(1) < \cdots < \kappa(n) }}
\hspace{-0.75em}  x_{\kappa(1)} \cdots x_{\kappa(n)} = e_{n}. \qedhere
\]
\end{proof}

This result implies that the $\GL_{n}$-module affording $\ind^{\GL}_{\UT} (\psi^{\sigma})$ decomposes into a direct sum of submodules that each afford some degenerate Gelfand--Graev character.  Exhbiting this decomposition explicitly would give a new proof of Corollary~\ref{cor:GLeposLLT} and Theorem~\ref{thm:LLTepos}.

\begin{question}
Find a module theoretic proof of Corollary~\ref{cor:GLeposLLT}.
\end{question}

\printbibliography

\end{document}